\numberwithin{equation}{section}
\newtheorem{theorem}{Theorem}[section]
\newtheorem{lemma}[theorem]{Lemma}
\newtheorem{definition}[theorem]{Definition}
\newtheorem{example}[theorem]{Example}
\newtheorem{remark}[theorem]{Remark}
\newtheorem{corollary}[theorem]{Corollary}
\newcommand{\CZ}{Calder\'on-Zygmund}
\newcommand{\X}{\mathbb{R}^{n_1+n_2}}
\DeclarePairedDelimiter\floor{\lfloor}{\rfloor}
\DeclareMathOperator*{\esssup}{ess\,sup}
\def\essinf{\mathop{\rm ess \, inf}}
\begin{document}
\title[Grushin pseudo-multipliers]{Sparse bounds for pseudo-multipliers associated to Grushin operators, I}

\author[S. Bagchi, R. Basak, R. Garg and A. Ghosh]
{Sayan Bagchi \and Riju Basak \and Rahul Garg \and Abhishek Ghosh}

\address[S. Bagchi]{Department of Mathematics and Statistics, Indian Institute of Science Education and Research Kolkata, Mohanpur--741246, West Bengal, India.}
\email{sayan.bagchi@iiserkol.ac.in}

\address[R. Basak]{Department of Mathematics, Indian Institute of Science Education and Research Bhopal, Bhopal--462066, Madhya Pradesh, India.}
\email{rijubasak52@gmail.com}

\address[R. Garg]{Department of Mathematics, Indian Institute of Science Education and Research Bhopal, Bhopal--462066, Madhya Pradesh, India.}
\email{rahulgarg@iiserb.ac.in}

\address[A. Ghosh]{Department of Mathematics, Indian Institute of Science Education and Research Bhopal, Bhopal--462066, Madhya Pradesh, India.} 
\curraddr{Tata Institute of Fundamental Research, Centre For Applicable Mathematics, Bangalore--560065, Karnataka, India.}
\email{abhi170791@gmail.com}

\subjclass[2020]{58J40, 43A85, 42B25}

\keywords{Hermite operator, Grushin operators, spectral multipliers, pseudo-differential operators, maximal operator, sparse operators, weighted boundedness}

\begin{abstract}
In this article, we prove sharp quantitative weighted $L^p$-estimates for Grushin pseudo-multipliers satisfying H\"ormander's condition as an application of pointwise domination of Grushin pseudo-multipliers by appropriate sparse operators.
\end{abstract}
\maketitle

\tableofcontents

\section{Introduction} \label{sec:intro} 
The theory of pseudo-differential operators has always been of immense interest in Harmonic analysis. This present article is the first of a series of articles where we investigate quantitative weighted norm inequalities for pseudo--multipliers corresponding to Grushin operators. In order to describe our results, let us first review the relevant literature concerning pseudo-differential operators on the Euclidean space as well as Hermite pseudo-multipliers.

Given $m \in L^{\infty} \left( \mathbb{R}^{n} \times \mathbb{R}^{n} \right) $, consider the pseudo-differential operator $m(x, D)$ defined by 
$$ m(x, D)f(x) := (2\pi)^{-n/2} \int_{\mathbb{R}^{n}} m(x, \xi) \widehat{f}(\xi) e^{i x \cdot \xi} \, d\xi, $$
for Schwartz class functions $f$ on $\mathbb{R}^{n}$, where $\widehat{f}$ denotes the Fourier transform of $f$ which is defined by $\widehat{f}(\xi) = (2\pi)^{-n/2} \int_{\mathbb{R}^{n}} f(x) e^{- i x \cdot \xi} \, dx.$ We call $m$ to be the symbol of the pseudo-differential operator $m(x, D)$. When the function $m$ does not depend on the space variable $x$, the associated operator $m(D)$ is indeed a Fourier multiplier operator. 

Given $\sigma\in\mathbb{R}$, $0\leq \rho \leq 1, \, 0\leq \delta< 1$, let $\mathscr{S}^{\sigma}_{\rho,  \delta}(\mathbb{R}^n)$ represent the class of smooth functions $m:\mathbb{R}^n\times \mathbb{R}^n\to \mathbb{C}$ such that for all $\alpha, \beta \in \mathbb{N}^n$, 
\begin{align}
|\partial^{\alpha}_{x}\partial_{\xi}^{\beta}m(x, \xi)|\leq C_{\alpha, \beta}(1+|\xi|)^{\sigma-\rho |\beta|+\delta |\alpha|}.   
\label{symbol:euclidean}
\end{align}
It is well known that pseudo-differential operators with symbols belonging to $\mathscr{S}^{\sigma}_{1, 0}(\mathbb{R}^n), \sigma\leq 0,$ fall into the realm of generalized Calder\'on-Zygmund operators, that is, their kernels satisfy  H\"ormander's condition. Sharp $L^p$-boundedness results for pseudo-differential operators are attributed to the seminal work of Fefferman \cite{Fefferman-Israel-Journal}. For details on the classical development in the theory of pseudo-differential operators, we refer to \cite{Coifman-Meyer-pseudo-1978, Treves-pseudodifferential-Vol1Book,TaylorPseudodifferentialBook81}. In this article, we are concerned with an analogue of symbol classes $\mathscr{S}^{0}_{1, \delta}$, for $0\leq \delta< 1$, in the context of pseudo-multipliers associated to Grushin operators. One of our primary objectives is to obtain minimal smoothness assumptions on the symbol function such that the associated Grushin pseudo--multiplier operators fall into the category of Calder\'on--Zygmund operators on spaces of homogeneous type in the sense of Coifman--Weiss \cite{Coifman-Weiss-book-1971}. Henceforth, for the sake of brevity, we shall write in short ``a space of homogeneous type" to denote such a space. 

Recall that an important aspect of the study of multiplier theorems is to obtain their counterparts in the context of weights. To begin with, we recall the work of  Kurtz--Wheeden \cite{Kurtz-Wheeden-weighted-multipliers-TAMS}. They studied weighted $L^p$-estimates for Mihlin--H\"ormander type Fourier multipliers, deducing also the best class of Muckenhoupt $A_p$ weights depending on the order of differentiability of the multiplier function. For pseudo-differential operators, there is already an extensive literature on weighted inequalities. We mention a few here. Miller \cite{Miller-pseudodifferential-TAMS1982} initiated the study of the weighted boundedness of pseudo-differential operators. He employed the Fefferman--Stein sharp maximal function $\mathcal{M}^{\sharp}$ and obtained pointwise estimates of the following form: Let $m\in \mathscr{S}^{0}_{1, 0}(\mathbb{R}^n)$, then for $1<r<\infty$, 
\begin{align}
\mathcal{M}^{\sharp}(m(x, D)f)(x) \lesssim_r M_{r}f(x),
\label{Miller-TAMS}
\end{align}
where $M_{r}f:=(M(|f|^r))^{1/r}$ with $M$ denoting the Hardy-Littlewood maximal function. Invoking good-$\lambda$-inequalities and reverse H\"older's inequality for $A_p$ weights, estimate \eqref{Miller-TAMS} implies that $m(x, D): L^p(\mathbb{R}^n, w)\to L^p(\mathbb{R}^n, w)$ is bounded for every $w\in A_{p}, 1<p<\infty$.

The approach explained above is a key step in producing weighted estimates for pseudo-differential operators and was also employed in the works of Fefferman\cite{Fefferman-Israel-Journal}, Chanillo--Torchinsky \cite{Chanillo-Torchinsky-weighted-pseudodifferential}, Michalowski--Rule--Staubach \cite{Michalowski-Rule-Staubach-Canad2012}, and many others. However, in order to obtain sharper quantitative dependence of $\|m(x, D)\|_{L^p(w)\to L^p(w)}$ on the $A_p$ characteristic ($[w]_{A_p}$) of the weight $w,$ one employs the modern technique known as ``sparse domination", which evolved during the development of the celebrated $A_2$-conjecture. $A_2$-conjecture was solved in complete generality by Hyt\"onen \cite{Hytonen-A2-2012}. We also mention the fundamental work of Lerner \cite{Lerner-A2-2013} where a comparatively simple proof of $A_2$-conjecture was obtained by proving norm domination of \CZ~operators by sparse operators. Subsequently, Lacey \cite{Lacey-A2-proof-Israel2017}, Conde Alonso--Rey \cite{Conde-Rey-MathAnn-2016}, Hyt\"onen \textit{et al} \cite{Hytonen-Roncal-Tapiola-Rough-homogeneous-Israel-2017} and Lerner \cite{Lerner-NYJM-2016} obtained pointwise domination of \CZ~operators by sparse operators. Thanks to their wide applications in obtaining sharp quantitative estimates, establishing sparse operator bounds for classical operators in Harmonic analysis is a growing area of research and the same has been applied in various contexts. For example, see \cite{Bernicot-Frey-Petermichl-Sparse-Beyond-APDE-2016, Conde-Culiuc-DiPlinio-Ou-Rough-Singular-APDE-2017,Lacey-Mena-Reguera-Sparse-Bochner-Riesz-JFAA-2019, Beltran-Cladek-sparse-pseudodifferential}, and references therein. 

Before we move further, let us also mention here that beyond the Euclidean space, analogues of pseudo-differential operators (or pseudo-multipliers) are also being studied in various other contexts. In a non-Euclidean setting, probably this is the first work where we are able to obtain pointwise estimates for pseudo-multipliers in terms of sparse operators. As far as (unweighted) $L^p$-boundedness is concerned, there has been a lot of recent interest in studying pseudo-multipliers in the set-up beyond the Euclidean spaces. For compact lie groups, we refer to \cite{RuzhanskyTurunenPseudoCompactGroupBook} and the references therein. For Heisenberg groups and more general graded Lie groups, we refer to \cite{BahouriFermanianGallagherPseudodifferentialHeisenberg, FischerRuzhanskyQuantizationBook, CardonaDelgadoRuzhanskyJGA2021} and the references therein. For pseudo-multipliers on certain homogeneous type spaces associated with a class of self-adjoint operators, we refer to \cite{BernicotFreyPseudodifferentialSemigroupOperators, GeorgiadisNielsenPseudodifferentialSelfAdjointOperators}. 

Originally introduced in \cite{EppersonHermitePseudo}, Hermite pseudo-multipliers were further studied by the first author and S. Thangavelu in \cite{BagchiThangaveluHermitePseudo}. Under some Mihlin--H\"ormander-type conditions on symbol functions, authors of \cite{BagchiThangaveluHermitePseudo} established the following weighted $L^p$-boundedness result for Hermite pseudo-multipliers. Let us denote by $\Delta$ the forward-difference operator defined by $\Delta m(x, k) = m(x, k+1) - m(x, k)$, and define $\Delta^j m = \Delta(\Delta^{j-1}m)$ for $j \geq 2$. Also, let us denote by $A_p(\mathbb{R}^n)$ the space of Muckenhoupt $A_p$ weights. 

\begin{theorem}[\cite{BagchiThangaveluHermitePseudo}] \label{thm:Bagchi-Thangavelu-1}
Let $m \in L^{\infty} \left( \mathbb{R}^{n} \times \mathbb{N}^{n} \right) $ be such that the Hermite pseudo-multiplier $m(x, H) \in \mathcal{B} \left( L^2 (\mathbb{R}^n) \right)$. Assume in addition that for some $N \in \mathbb{N}$, 
\begin{align*} 
\sup_{x \in \mathbb{R}^n} \left| \Delta^j m(x, k) \right| & \lesssim_{N} (2k + n)^{-j}, \quad \text{for all } \, j \leq N + 1, \\ 
\text{and} \quad \sup_{x \in \mathbb{R}^n} \left| \partial_{x_i} \Delta^j m(x, k) \right| & \lesssim_{N} (2k + n)^{-j}, \quad \text{for all } \, j \leq N \, \text{ and } \, 1 \leq i \leq n, 
\end{align*}
then the following results hold true. 
\begin{enumerate}
\item If $N \geq \floor*{n/2}$, then $m(x, H)$ extends to a bounded operator on $L^p(\mathbb{R}^{n}, w)$ for every $p \in (2, \infty)$ and $w \in A_{p/2} (\mathbb{R}^n)$. 

\item If $N \geq n$, then $m(x, H)$ extends to a bounded operator on $L^p(\mathbb{R}^{n}, w)$ for every $p \in (1, \infty)$ and $w \in A_p(\mathbb{R}^n)$. 
\end{enumerate}
\end{theorem}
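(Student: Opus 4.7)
The plan is to reduce the weighted estimates to a Miller-type pointwise bound on the Fefferman--Stein sharp maximal function, following the strategy behind \eqref{Miller-TAMS}. Concretely, I would aim to show that for an appropriate $r$,
\begin{align*}
\mathcal{M}^{\sharp}(m(\cdot, H)f)(x) \lesssim M_r f(x),
\end{align*}
with $r = 2$ giving part (1) and $r = 1$ giving part (2). Once such an estimate is in hand, the weighted bounds follow by combining the Fefferman--Stein inequality with a good-$\lambda$ argument and the reverse H\"older property of Muckenhoupt $A_{p/r}$ weights, exactly as in the arguments of Miller \cite{Miller-pseudodifferential-TAMS1982} and Chanillo--Torchinsky \cite{Chanillo-Torchinsky-weighted-pseudodifferential}.

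To prove the sharp maximal estimate, I would first perform a dyadic decomposition of the symbol along the Hermite spectrum. Choose a Littlewood--Paley family $\{\psi_j\}_{j \geq 0}$ of smooth spectral cutoffs with $\psi_j$ supported where $2k + n \sim 2^j$, and decompose
\begin{align*}
m(x, H) = \sum_{j \geq 0} m_j(x, H), \qquad m_j(x, k) := m(x, k)\, \psi_j(k).
\end{align*}
Each $m_j(x, \cdot)$ is spectrally localised at frequency $\sim 2^j$, and via summation by parts the finite-difference hypotheses translate into $|\Delta^{\ell} m_j(x, k)| \lesssim 2^{-j\ell}$ for $\ell \leq N + 1$ and $|\partial_{x_i} \Delta^{\ell} m_j(x, k)| \lesssim 2^{-j\ell}$ for $\ell \leq N$. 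The point of this step is to isolate operators whose kernels are both essentially spectrally localised and quantitatively controlled in terms of $N$.

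The core task is then to obtain pointwise kernel estimates for each piece $m_j(x, H)$. Using the explicit form of the Hermite projection kernels together with Plancherel on the spectral side, one arrives at a bound of the shape
\begin{align*}
|K_j(x, y)| \lesssim 2^{jn/2} \bigl(1 + 2^{j/2} |x - y|\bigr)^{-N-1},
\end{align*}
with an analogous estimate for $\partial_{x_i} K_j$; the $N + 1$ powers of decay are gained by trading each finite difference in $k$ against a factor of $|x - y|^2$ arising from commutators with the Hermite evolution. A Cotlar--Stein orthogonality argument then produces the $L^2$-boundedness of $\sum_j m_j(x, H)$, while for the sharp maximal bound one freezes $x$ at a ball $B$, splits $f = f \chi_{2B} + f \chi_{(2B)^c}$, uses the kernel estimate together with the $\partial_{x_i} \Delta^{\ell}$ hypothesis to replace $m(x, H)$ by $m(x_B, H)$ on the far piece, and sums the resulting local averages of $|f|^r$. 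Optimising $r$ recovers $M_2 f$ when $N \geq \floor*{n/2}$ and $M f$ when $N \geq n$.

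The main obstacle I foresee is the quantitative bookkeeping of the Plancherel factor $2^{jn/2}$ against the off-diagonal decay: the threshold $\floor*{n/2}$ in (1) arises because $L^2$ kernel bounds only cost $2^{jn/4}$ per side, whereas the threshold $n$ in (2) reflects the full $L^1$ loss $2^{jn/2}$ that has to be absorbed entirely by the $N$ finite differences. Calibrating the commutator expansion so that each application of $\Delta$ generates a clean $|x-y|^2$ factor, without degrading either the power of $2^{j/2}$ or the dependence on $N$, is the most delicate step, and it is here that the precise regularity indices $\floor*{n/2}$ and $n$ are forced upon us.
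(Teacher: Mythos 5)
You should first note that the paper does not prove Theorem \ref{thm:Bagchi-Thangavelu-1} at all: it is quoted verbatim from \cite{BagchiThangaveluHermitePseudo} as background, so there is no in-paper proof to compare against. Judged on its own, your outline follows the correct and essentially classical route -- dyadic decomposition along the Hermite spectrum, kernel/weighted Plancherel estimates for each spectrally localised piece with decay governed by the number of finite differences, control of a sharp maximal function by $M_2 f$ when $N \geq \floor*{n/2}$ and by a Hardy--Littlewood-type maximal function when $N \geq n$, and then the Fefferman--Stein/good-$\lambda$ machinery -- which is also the spirit of the cited work and the non-sparse antecedent of what the present paper does for Grushin operators via Theorems \ref{thm:main-sparse} and \ref{thm:main-sparse-more-derivative}.

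Three points need repair, though none destroys the scheme. First, the claim $\mathcal{M}^{\sharp}(m(\cdot,H)f) \lesssim Mf$ with $r=1$ for part (2) is an overclaim: even for classical Calder\'on--Zygmund operators the local term $\frac{1}{|B|}\int_B |T(f\chi_{2B})|$ cannot be handled with $r=1$; one proves the estimate with $M_r$ for every $r>1$ (or an $L^q$-oscillation variant with $q<1$ via Kolmogorov's inequality and the weak $(1,1)$ bound) and then invokes the openness of the $A_p$ classes to absorb the loss, which still yields the stated conclusion for all $w \in A_p$. Second, the Cotlar--Stein step is both unnecessary and unavailable: $L^2$-boundedness of $m(x,H)$ is a hypothesis of the theorem precisely because the finite-difference conditions alone do not imply it, and almost-orthogonality of the $x$-dependent pieces $m_j(x,H)$ is not straightforward; you should simply use the assumed $L^2$ bound for the local part of the oscillation. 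Third, the bookkeeping ``each finite difference yields a factor $|x-y|^2$'' is off by a square root: in the correct normalisation each difference (equivalently each $\partial_\eta$ at spectral height $\eta \sim 2^j$) buys one power of $(1+2^{j/2}|x-y|)^{-1}$, i.e.\ weights $d(x,y)^{2\mathfrak{r}}$ against $2^{-j\mathfrak{r}}$ in the $L^2$ Plancherel estimates, and the pointwise bound $|K_j(x,y)| \lesssim 2^{jn/2}(1+2^{j/2}|x-y|)^{-(N+1)+\epsilon}$ is obtained (with an $\epsilon$-loss) from weighted Plancherel estimates of the type in Lemmas \ref{lem:L2-weighted-Plancherel-estimate-grushin} and \ref{lem:pointwise-weighted-Plancherel-estimate-grushin} rather than from a clean commutator identity; with these corrections the thresholds $\floor*{n/2}$ and $n$ come out as you describe.
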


In the next subsection, we recall some preliminaries on Grushin operators before presenting our results in detail. 


\subsection{Grushin pseudo-multipliers} \label{subsec:intro-grushin}
We write points in $\mathbb{R}^{n_1 + n_2}$ as $x = (x^{\prime}, x^{\prime \prime}) \in \mathbb{R}^{n_1} \times \mathbb{R}^{n_2}$. For each $\varkappa \in \{ 1, 2, 3, \ldots\}$, let us consider Grushin operators 
\begin{align} \label{def:general-grushin-operator-even-power}
G_{\varkappa} = - \Delta_{x^{\prime}} - V_{\varkappa} \left(x^{\prime} \right) \Delta_{x^{\prime \prime}}
\end{align}
with $V_{\varkappa} \left(x^{\prime} \right)$ denoting either $|x^{\prime}|^{2 \varkappa}$ or $\sum_{j = 1}^{n_1} {x_j^{\prime}}^{2 \varkappa}$. 

Grushin operators were introduced in \cite{Grushin70}. The operator $G_\varkappa$ is degenerate elliptic along the $n_2$-dimensional plane $\{0\}\times \mathbb{R}^{n_2}$. It is studied in various contexts related to Dirichlet problems in weighted Sobolev spaces, free boundary problems in partial differential equations etc. In particular, when $\varkappa=1$, Grushin operators are closely connected to the sub-Laplacian on the Heisenberg group (see, for example, \cite{Dziubanski-Jotsaroop-Hardy-BMO-Grushin}). Since in this article we are interested in studying pseudo-multipliers associated to Grushin operators, we start with their spectral decomposition.

For a Schwartz class function $f$ on $\mathbb{R}^{n_1 + n_2}$, let $f^\lambda$ denote (upto a dimensional constant multiple) its inverse Fourier transform in $x^{\prime \prime}$-variable, given by $f^\lambda(x^{\prime}) = \int_{\mathbb{R}^{n_2}} f (x^{\prime}, x^{\prime \prime}) e^{i \lambda \cdot x^{\prime \prime}} \, d x^{\prime \prime}$. It follows that 
$$ G_{\varkappa} f (x) = (2 \pi)^{- n_2} \int_{\mathbb{R}^{n_2}} e^{-i \lambda \cdot x^{\prime \prime}} H_{\varkappa} (\lambda) f^{\lambda} (x^{\prime}) \, d\lambda, $$
where, for $\lambda \neq 0$, $H_{\varkappa}(\lambda) = - \Delta_{x^{\prime}} + |\lambda|^2 |x^{\prime}|^{2 \varkappa}$ or $H_{\varkappa}(\lambda) = - \Delta_{x^{\prime}} + |\lambda|^2 \sum_{j = 1}^{n_1} {x_j^{\prime}}^{2 \varkappa} $, depending on the choice of $V_{\varkappa} (x^{\prime})$. 

In particular, when $\varkappa = 1$, operators $H_{1}(\lambda) = H(\lambda) = - \Delta_{x^{\prime}} + |\lambda|^2 |x^{\prime}|^2$, for $\lambda \neq 0$, are called the scaled Hermite operators on $\mathbb{R}^{n_1}$. In that case, we drop the suffix and denote the Grushin operator as $G$ itself. 

It is well known (see, for example, Theorems XIII.16, XIII.64, and XIII.67 in \cite{ReedSimonBookVolumeIV}) that there exists a complete orthonormal basis $ \{ h_{\varkappa, k} : k \in \mathbb{N} \}$ of $L^2 \left( \mathbb{R}^{n_1} \right)$ such that $H_{\varkappa} (1) h_{\varkappa, k} = \nu_{\varkappa, k} h_{\varkappa, k}$ with $0 < \nu_{\varkappa, 1} \leq \nu_{\varkappa, 2} \leq \nu_{\varkappa, 3} \leq \ldots$ and $\lim_{k \to \infty} \nu_{\varkappa, k} = \infty.$ It is straightforward to verify that for each $k \in \mathbb{N}$ and $\lambda \neq 0$, if we consider 
$$ h^{\lambda}_{\varkappa, k} (x^\prime) = |\lambda|^{\frac{n_1}{2 (\varkappa + 1)}} h_{\varkappa, k} \left( |\lambda|^{\frac{1}{\varkappa + 1}} x^\prime \right), $$
then $ \{ h^{\lambda}_{\varkappa, k} : k \in \mathbb{N} \}$ forms an orthonormal basis for $L^2(\mathbb{R}^{n_1})$, and that 
\begin{align*} 
G_{\varkappa} \left( h^{\lambda}_{\varkappa, k} (x^\prime) \, e^{i \lambda \cdot x^{\prime \prime}} \right) & = |\lambda|^{\frac{2}{\varkappa + 1}} \nu_{\varkappa, k} \,  h^{\lambda}_{\varkappa, k} (x^\prime) \, e^{i \lambda \cdot x^{\prime \prime}}. 
\end{align*} 
Thus, we can write the the spectral decomposition of the Grushin operator $G_{\varkappa}$ as follows: 
$$G_{\varkappa} f (x) = (2 \pi)^{- n_2} \int_{\mathbb{R}^{n_2}} e^{-i \lambda \cdot x^{\prime \prime}} \sum_{k \in \mathbb{N}} |\lambda|^{\frac{2}{\varkappa + 1}} \nu_{\varkappa, k} \left( f^{\lambda}, h^{\lambda}_{\varkappa, k} \right) h^{\lambda}_{\varkappa, k} (x^\prime) \, d\lambda.$$ 

As also done in \cite{Bagchi-Garg-1} (see Subsections $1.3$ and $2.1$ of \cite{Bagchi-Garg-1}), given a symbol function $m \in L^\infty \left( \mathbb{R}^{n_1 + n_2} \times \mathbb{R}_+ \right) $, it is natural to consider the Grushin pseudo-multiplier $m(x, G_{\varkappa})$, defined (densely) on $L^2 \left( \mathbb{R}^{n_1 + n_2} \right)$ by 
\begin{equation} \label{Gru-pseudo}
m(x, G_{\varkappa}) f(x) := (2 \pi)^{- n_2} \int_{\mathbb{R}^{n_2}} e^{-i \lambda \cdot x^{\prime \prime}} \sum_{k \in \mathbb{N}} m \left( x, |\lambda|^{\frac{2}{\varkappa + 1}} \nu_{\varkappa, k} \right) \left( f^{\lambda}, h^{\lambda}_{\varkappa, k} \right) h^{\lambda}_{\varkappa, k} (x^\prime) \, d\lambda. 
\end{equation}
When the function $m$ does not depend on the space variable $x$, the associated operator is said to be a Grushin multiplier and is denoted by $m(G_{\varkappa}).$ In the last decade, boundedness of spectral multipliers for more general Grushin-type operators has been extensively studied by various authors. See, for example, \cite{ DuongOuhabazSikoraWeightedPlancherel2002JFA, JotsaroopSanjayThangaveluRieszTransformsGrushin, MartiniSikoraGrushinMRL, MartiniMullerGrushinRevistaMath, Chen-Sikora-multipliers-Grushin-type-2013, Martini-Dallara-robust-approach-Grushin-multiplers-2020, Martini-Dallara-optimal-multipliers-Grushin-plane-II, Martini-Dallara-optimal-multipliers-Grushin-plane-I}, and references therein. 

Now, consider the following first order gradient vector fields: 
\begin{align} \label{first-order-grad}
X_{j} = \frac{\partial}{\partial x_j^{\prime}} \quad \textup{and} \quad X_{\alpha, k} = {x^{\prime}}^{\alpha} \frac{\partial}{\partial x_k^{\prime \prime}},
\end{align}
for $1 \leq j \leq n_1$, $1 \leq k \leq n_2$, and $\alpha \in \mathbb{N}^{n_1}$ with $|\alpha| = \varkappa$. 

Clearly, with either choice of potential $V_{\varkappa} (x^{\prime})$, the corresponding Grushin operator $G_{\varkappa}$ can be expressed as a negative sum of $X^2_{j}$'s and $X^2_{\alpha, k}$'s (see, for example, Section 3 of \cite{RobinsonSikoraMathZ2016}). Let us denote by $X$ the first order gradient vector field 
\begin{align} \label{first-order-grad-vector}
X := (X_j, X_{\alpha,k})_{1 \leq j \leq n_1, \, 1 \leq k \leq n_2, \, |\alpha| = \varkappa}. 
\end{align}

We write $n_0 = n_1 + n_2 \binom{\varkappa + n_1 - 1}{n_1 - 1}$, and consider symbol functions $m \in L^\infty \left( \mathbb{R}^{n_1 + n_2} \times \mathbb{R}_+ \right)$ which satisfy the following estimate for some $\rho, \delta \geq 0$: 
\begin{align} \label{eq:grushin-symb} 
\left| X^\Gamma \partial_{\eta}^l m(x, \eta) \right| \lesssim_{\Gamma, l} (1 + \eta)^{- (1 + \rho) \frac{l}{2} + \delta \frac{|\Gamma|}{2}} 
\end{align} 
for some $\Gamma \in \mathbb{N}^{n_0}$ and $l \in \mathbb{N}$. 

\begin{remark}
Condition \eqref{eq:grushin-symb} is motivated by Euclidean symbol classes $\mathscr{S}^{0}_{\rho,  \delta}(\mathbb{R}^n)$. Observe that the Grushin operator $G_{\varkappa}$ is homogeneous of degree two, that is, $G_{\varkappa}\circ D_{r}=r^2 (D_{r}\circ G_{\varkappa})$, where $D_{r}(x^{\prime}, x^{\prime\prime})=(r x^{\prime}, r^{1+\varkappa} x^{\prime\prime})$ are the non-isotropic dilations associated to $G_{\varkappa}.$ Thus, a natural analogue of $\mathscr{S}^{0}_{\rho, \delta}(\mathbb{R}^n)$ condition for Grushin pseudo-multiplier $\widetilde{m}(x, \sqrt{G})$ is 
$$ \left| X^\Gamma \partial_{\eta}^l \widetilde{m}(x, \eta) \right| \lesssim_{\Gamma, l} (1 + \eta)^{- \rho l + \delta |\Gamma|}. $$ 
It is technically convenient to work with the symbol $m(x, \eta) := \widetilde{m}(x, \sqrt{\eta}),$ which results into condition \eqref{eq:grushin-symb}. For more details on this aspect, we refer to \cite{Bagchi-Garg-1} (see the discussion immediately after the statement of Theorem $1.5$ as well as Remark $2.2$ in \cite{Bagchi-Garg-1}).
\end{remark}

Before moving on, let us mention that an analogue of the Calder\'on--Vaillancourt theorem for Grushin pseudo-multipliers for $\varkappa =1$ was studied in \cite{Bagchi-Garg-1}, establishing $L^2$-boundedness of Grushin pseudo-multipliers $m(x,G)$ for symbol functions $m$ satisfying condition \eqref{eq:grushin-symb} with $0 \leq \delta < \rho \leq 1$. In its full generality, the endpoint case of $0 \leq \delta = \rho < 1$ is still open. 

In this paper, we study pseudo-multiplier operators $m(x, G_{\varkappa})$ corresponding to symbols $m(x, \eta)$ satisfying condition of the type \eqref{eq:grushin-symb} with $\rho = 1$ and $0 \leq \delta < 1$. In this direction, following are our main results. Throughout this article, $Q:=n_{1}+(1+\varkappa) n_{2}$ denotes the homogeneous dimension associated to the operator $G_{\varkappa},$ see \eqref{ineq:doubling-condition-grushin-metric} for details.

\begin{theorem} \label{thm:pseudo-grushin-a=0-unweighted}
Let $m \in L^\infty \left( \mathbb{R}^{n_1+n_2} \times \mathbb{R}_+ \right)$ be such that 
\begin{align*} 
\left| \partial_{\eta}^l m(x, \eta) \right| \lesssim_{l} (1 + \eta)^{-l}
\end{align*}
for all $l \leq Q + 1$. Assume also that $m(x, G_{\varkappa})$ is bounded on $L^2 ( \mathbb{R}^{n_1 + n_2})$. Then $m(x, G_{\varkappa})$ is of weak type $(1, 1)$ and as a consequence bounded on $L^p(\mathbb{R}^{n_1+n_2})$ for every $1<p<2$. 
\end{theorem}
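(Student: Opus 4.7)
\medskip

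\textbf{Proof plan.} The plan is to treat $m(x,G_{\varkappa})$ as a generalized Calder\'on--Zygmund operator on the space of homogeneous type $(\mathbb{R}^{n_1+n_2}, d_{G_{\varkappa}}, dx)$, where $d_{G_{\varkappa}}$ is the Grushin control distance with homogeneous dimension $Q$, and then apply the classical Calder\'on--Zygmund decomposition argument. Since the hypothesis provides no smoothness of $m$ in the spatial variable $x$, I do not expect sparse domination or $L^p$ bounds for $p>2$ in this setting; the $L^2$-hypothesis will be used as the ``cheap'' endpoint, and the full statement will follow by Marcinkiewicz interpolation between the $L^2$-bound and the weak-$(1,1)$ bound I establish.

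The first step is to decompose the symbol dyadically in the spectral variable: fix a smooth partition of unity $1 = \psi_0(\eta) + \sum_{j\geq 1}\psi(2^{-2j}\eta)$ with $\psi$ supported in $[1/4,1]$, and write
\begin{equation*}
m(x,\eta) = \sum_{j\geq 0} m_j(x,\eta), \qquad m_j(x,\eta) = m(x,\eta)\,\psi_j(\eta).
\end{equation*}
Each piece $m_j(x,G_{\varkappa})$ has integral kernel
\begin{equation*}
K_j(x,y) = \int_{\mathbb{R}^{n_2}} e^{-i\lambda\cdot(x''-y'')} \sum_{k\in\mathbb{N}} m_j\!\left(x,|\lambda|^{\frac{2}{\varkappa+1}}\nu_{\varkappa,k}\right) h^{\lambda}_{\varkappa,k}(x')\,\overline{h^{\lambda}_{\varkappa,k}(y')}\,d\lambda,
\end{equation*}
and the total kernel of $m(x,G_{\varkappa})$ off the diagonal is $K(x,y) = \sum_{j\geq 0} K_j(x,y)$. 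I would treat $K_j$ as a frozen multiplier in $y$ (with parameter $x$) and apply the standard weighted Plancherel/heat-kernel machinery available for Grushin operators (as developed in the spectral multiplier literature, e.g.\ \cite{JotsaroopSanjayThangaveluRieszTransformsGrushin,MartiniSikoraGrushinMRL,MartiniMullerGrushinRevistaMath}). This machinery produces, from the hypothesis $\|\partial_\eta^l m_j(x,\cdot)\|_\infty \lesssim 2^{-2jl}$ for $l\leq Q+1$, Gaussian-type off-diagonal bounds of the form
\begin{equation*}
\left|K_j(x,y)\right| \lesssim |B(x,2^{-j})|^{-1}\bigl(1+2^j d_{G_{\varkappa}}(x,y)\bigr)^{-(Q+1)}
\end{equation*}
uniformly in $x$, together with the companion regularity estimate, obtained by exploiting smoothness of the eigenfunctions $h^{\lambda}_{\varkappa,k}$ in the $y$-variable,
\begin{equation*}
\left|K_j(x,y) - K_j(x,\tilde y)\right| \lesssim |B(x,2^{-j})|^{-1}\,2^{j}d_{G_{\varkappa}}(y,\tilde y)\bigl(1+2^j d_{G_{\varkappa}}(x,y)\bigr)^{-(Q+1)},
\end{equation*}
valid whenever $d_{G_{\varkappa}}(y,\tilde y) \leq \tfrac{1}{2}d_{G_{\varkappa}}(x,y)$. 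Summing over $j$ in the standard way yields the H\"ormander integral condition in the $y$-variable:
\begin{equation*}
\sup_{y,\tilde y}\, \int_{d_{G_{\varkappa}}(x,y)\geq C d_{G_{\varkappa}}(y,\tilde y)} |K(x,y)-K(x,\tilde y)|\,dx < \infty.
\end{equation*}

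With $L^2$-boundedness in hand and the H\"ormander condition in $y$ established, the weak-$(1,1)$ bound follows by the usual Calder\'on--Zygmund decomposition argument adapted to the space of homogeneous type $(\mathbb{R}^{n_1+n_2},d_{G_{\varkappa}},dx)$: at height $\alpha>0$ split $f=g+b$, $b=\sum_Q b_Q$ with $b_Q$ mean-zero and supported in a Grushin ball $Q$, bound $\|g\|_2^2\lesssim \alpha\|f\|_1$ and apply the $L^2$-hypothesis to $g$, and for $b$ use the mean-zero property together with the $y$-regularity of $K$ to estimate $m(x,G_\varkappa)b_Q(x)$ outside a dilate $Q^*$ and discard $Q^*$ using the doubling of $dx$. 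Interpolating with the given $L^2$-bound gives $L^p$-boundedness for $1<p<2$.

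The main obstacle is establishing the kernel estimates described above with the sharp number $Q+1$ of spectral derivatives. This is where the delicate Grushin analysis enters: one must exploit weighted Plancherel estimates and on-diagonal heat kernel bounds sharply enough to convert $(Q+1)$ derivatives of the symbol in $\eta$ into the polynomial off-diagonal decay $(1+2^j d_{G_{\varkappa}}(x,y))^{-(Q+1)}$, and to gain a factor $2^j d_{G_{\varkappa}}(y,\tilde y)$ upon taking a (Grushin) difference in $y$ without costing regularity in $x$. The absence of any $x$-regularity in the hypothesis forces the argument to be entirely asymmetric and is precisely why only weak $(1,1)$, rather than a sparse/quantitative weighted statement, can be reached here.
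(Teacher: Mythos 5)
Your proposal is correct and follows essentially the same route as the paper: decompose the operator dyadically in the spectral variable, derive weighted-Plancherel pointwise bounds for each piece $K_j$ and its $y$-gradient from the heat-kernel machinery, combine the two via the standard $\min$ argument and sum in $j$ to obtain a H\"ormander-type condition in the $y$-variable alone, then conclude weak $(1,1)$ by Calder\'on--Zygmund decomposition on the homogeneous space and interpolate with the $L^2$ hypothesis. The paper formalizes exactly this through its conditions \eqref{cond:General-hypo-sup} and \eqref{cond:General-hypo-y-grad-sup} (with $R_0=Q+\tfrac12$ rather than $Q+1$, the loss coming from the $\epsilon$ in the Sobolev exponent, which is harmless since any decay exceeding $Q$ suffices), the mean-value inequality of Lemma \ref{lem:general-grushin-Mean-value} to pass from gradient bounds to difference bounds, and Theorem \ref{thm:weak-type-bound-operator}.
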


The above theorem implies $L^p$-boundedness in the range $1<p<2$ only, and we can not use duality to conclude boundedness in the range $2<p<\infty$ because the adjoint operator $m(x, G_{\varkappa})^*$ is not necessarily a Grushin pseudo-multiplier.

In the following two theorems, we establish weighted $L^p$-boundedness results for $m(x, G_{\varkappa})$ in the range $2<p<\infty$ and $1 < p < \infty$, depending on the assumed number of derivatives of the symbol function $m(x, \eta)$. Let us remark that part of our motivation comes from the work of \cite{DuongSikoraYanJFA2011}, where the authors studied weighted $L^p$-boundedness results for spectral multipliers corresponding to a non-negative self-adjoint operator on a space of homogeneous type (see Theorem 3.1 in \cite{DuongSikoraYanJFA2011}). Compared to the results of \cite{DuongSikoraYanJFA2011}, in our work we are only concerned with Grushin operators, but more importantly our methods are well suited for Grushin pseudo-multipliers as well.

We are now ready to state our main results on sparse domination for operators $m(x, G_{\varkappa})$. Let $\mathcal{S}$ denote a family of generalised-dyadic cubes in the context of the homogeneous type space, which we will explain in detail in Section \ref{subsec:sparse-domination}. We say a collection of measurable sets $S \subset \mathcal{S}$ to be an $\eta$-sparse family (for some $0<\eta<1$) if for every member $\mathcal{Q}\in {S}$ there exists a set $E_{\mathcal{Q}} \subseteq \mathcal{Q}$ such that $|E_{\mathcal{Q}}|\geq \eta |\mathcal{Q}|$, and the sets $\{E_{\mathcal{Q}}\}_{\mathcal{Q}\in {S}}$ are pairwise disjoint. Corresponding to a sparse family ${S}$ and $1\leq r<\infty$, we define the sparse operator as follows: 
\begin{align} \label{def:Sparse-operator} 
\mathcal{A}_{r, S}f(x) = \sum_{\mathcal{Q}\in S} \left( \frac{1}{|\mathcal{Q}|} \int_{\mathcal{Q}}|f|^r \right)^{1/r} \chi_{\mathcal{Q}}(x).
\end{align} 
We simply write $\mathcal{A}_{S}$ for $\mathcal{A}_{1, S}$.

\begin{theorem} \label{thm:pseudo-grushin-a=0-less-derivative}
For a fixed $0 \leq \delta < 1$, let $m \in L^\infty \left( \mathbb{R}^{n_1+n_2} \times \mathbb{R}_+ \right)$ be such that 
\begin{align*} 
\left| \partial_{\eta}^{l} m(x, \eta) \right| & \lesssim_{l} (1 + \eta)^{-l}, \quad \text{for all} \quad l \leq \floor*{Q/2} + 1, \\ 
\text{and} \quad \left| X_x \partial_{\eta}^{l} m(x, \eta) \right| & \lesssim_{l, \delta} (1 + \eta)^{-l + \frac{\delta}{2}}, \quad \text{for all} \quad l \leq \floor*{Q/2}.
\end{align*} 
Assume also that the operator $T = m(x, G_{\varkappa})$ is bounded on $L^2 ( \mathbb{R}^{n_1 + n_2})$. Then, for every compactly supported bounded measurable function $f$ there exists a sparse family $S \subset \mathcal{S}$ such that
\begin{align*}
|T f(x)| \lesssim_{T} \mathcal{A}_{2, S}f(x),
\end{align*}
for almost every $x \in \mathbb{R}^{n_1+n_2}$.
\end{theorem}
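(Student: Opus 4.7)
The plan is to combine a dyadic Littlewood--Paley decomposition of $m(x,\eta)$ in the spectral variable with the abstract sparse-domination framework of Lerner, adapted to the homogeneous-space structure associated with $G_{\varkappa}$. Since only roughly half the derivatives required for standard $L^1$-type H\"ormander regularity are available, the conclusion must involve the $L^2$-averaging sparse operator $\mathcal{A}_{2,S}$, consistent with weighted Plancherel-type kernel estimates replacing pointwise ones. Specifically, I would write $m = \sum_{j \geq 0} m_j$ with $m_j$ spectrally localized at $\eta \sim 2^{2j}$, and let $K_j(x,y)$ denote the integral kernel of $T_j := m_j(x, G_{\varkappa})$. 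Using the assumption on $\partial_\eta^l m$ with $l \leq \floor*{Q/2} + 1$ and a weighted Plancherel estimate for $G_{\varkappa}$, I expect an $L^2$-off-diagonal bound of the form
\begin{equation*}
\biggl( \int_{d(x,y) \sim 2^s 2^{-j}} |K_j(x,y)|^2 \, dy \biggr)^{\!1/2} \lesssim 2^{-s (\floor*{Q/2} + 1)} |B(x, 2^{-j})|^{-1/2}
\end{equation*}
for every $s \geq 1$, where $d$ is the control distance associated with $X$. The assumption on $X_x \partial_\eta^l m$ with $l \leq \floor*{Q/2}$, in which each $X_x$-derivative contributes a factor $2^{j\delta/2}$ paid off by the spatial scale $2^{-j}$, should analogously yield the $L^2$-H\"ormander regularity
\begin{equation*}
\biggl( \int_{d(x,y) \sim 2^s 2^{-j}} |K_j(x,y) - K_j(x',y)|^2 \, dy \biggr)^{\!1/2} \lesssim \bigl(2^j d(x,x')\bigr)^{1 - \delta} \, 2^{-s \floor*{Q/2}} \, |B(x, 2^{-j})|^{-1/2}
\end{equation*}
whenever $d(x, x') \lesssim 2^{-j}$.

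With these two estimates in hand, I would introduce the grand maximal truncation operator
\begin{equation*}
\mathcal{M}_T^{\#} f(x) = \sup_{\mathcal{Q} \ni x}\, \esssup_{x', x'' \in \mathcal{Q}} \bigl| T(f \chi_{\X \setminus 3\mathcal{Q}})(x') - T(f \chi_{\X \setminus 3\mathcal{Q}})(x'') \bigr|,
\end{equation*}
the supremum running over generalised-dyadic cubes $\mathcal{Q} \in \mathcal{S}$ containing $x$. Summing the two kernel estimates above over $j$ and over the dyadic shells at scales $2^s$, followed by an application of the Cauchy--Schwarz inequality, should yield the pointwise control $\mathcal{M}_T^{\#} f(x) \lesssim M_2 f(x)$. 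Combined with the hypothesis that $T$ is bounded on $L^2$, this provides the weak type $(2,2)$ estimate for $\mathcal{M}_T^{\#}$, and Lerner's recursive stopping-time construction (in its standard form for spaces of homogeneous type) then produces a sparse family $S \subset \mathcal{S}$ realising the desired pointwise bound $|Tf(x)| \lesssim \mathcal{A}_{2,S} f(x)$.

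The principal obstacle is the derivation of the weighted Plancherel and H\"ormander estimates above for a \emph{pseudo}-multiplier rather than a multiplier: since $T$ depends on the spatial variable $x$, one cannot invoke existing Plancherel bounds for Grushin multipliers directly. The natural strategy is to freeze the first argument of $m(x, \eta)$, apply a Plancherel estimate uniform in that argument, and then propagate the $X_x$-derivatives through the spectral expansion \eqref{Gru-pseudo}. The precise balance between the two smoothness hypotheses --- $\floor*{Q/2} + 1$ derivatives in $\eta$ alone, but only $\floor*{Q/2}$ in the mixed $X_x \partial_\eta^l$ case --- is tuned so that both estimates above sum convergently after Cauchy--Schwarz, and verifying this balance while correctly tracking the parameter $\delta$ through the factor $\bigl(2^j d(x,x')\bigr)^{1-\delta}$ will be the most delicate accounting in the argument.
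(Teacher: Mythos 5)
Your proposal follows essentially the same route as the paper: a dyadic spectral decomposition $T=\sum_j T_j$, weighted Plancherel-type $L^2$ kernel estimates for the pieces $T_j$ (with the pseudo-multiplier handled by freezing the first argument of $m$, applying the multiplier estimate uniformly, and propagating $X_x$-derivatives by Leibniz), a pointwise bound $\mathcal{M}^{\#}_{T,s}f \lesssim \mathcal{M}_2 f$ for a grand maximal truncation operator, and the Lerner--Ombrosi/Lorist sparse domination principle. The paper packages the kernel-to-sparse step as a standalone statement (its Theorem~\ref{thm:main-sparse}, proved via Theorems~\ref{thm:Lorist-general-sparse-principle} and \ref{thm:domination-maximal-M2}), and then checks its hypotheses, but the substance is the same as your plan. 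One remark: the paper replaces your kernel-difference estimate by an estimate on $X_x T_j(x,y)$ together with a sub-Riemannian mean-value inequality (Lemma~\ref{lem:general-grushin-Mean-value}); this is morally equivalent but is the natural formulation here since the control distance $d$ is not a Euclidean one.

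Where your proposal is imprecise is in the second displayed kernel estimate. The gradient of the kernel, after the mean-value inequality, contributes a factor that is \emph{linear} in $d(x,x')$, namely $R\,d(x,x')$ with $R$ the spectral scale; it is not Hölder of order $1-\delta$ in $d(x,x')$. The parameter $\delta$ instead eats into the off-diagonal decay exponent. Concretely, with $\lfloor Q/2\rfloor$ derivatives of $X_x\partial_\eta^l m$ available, the naive weighted Plancherel bound would only give $\mathfrak{r}$-decay up to $\lfloor Q/2\rfloor - \epsilon < Q/2$, which is not enough to sum the shells after Cauchy--Schwarz: the requirement for that sum is strictly $\mathfrak{r} > Q/2$. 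The paper's remedy (Corollary~\ref{cor-pseudo:grad-unweighted-L-2-estimate-compact}) is to observe that the gradient estimate is only needed on a bounded region $d(x,y)<\mathcal{K}_0$, so one may discard a fixed power $d(x,y)^{1-\delta}\lesssim_{\mathcal{K}_0}1$ of the weight and convert it into a factor $R^{1-\delta}$; this raises the admissible $\mathfrak{r}$ to $\lfloor Q/2\rfloor + (1-\delta) - \epsilon$, which (for $\delta$ not too large, or $Q$ even) clears the $Q/2$ threshold. Your phrase \emph{``paid off by the spatial scale''} gestures at a trade-off of this kind, but the place where $(1-\delta)$ must appear is the $s$-decay exponent, not the exponent of $2^j d(x,x')$; without that correction your shell sum $\sum_s 2^{-s\lfloor Q/2\rfloor}\cdot 2^{sQ/2}$ does not converge.
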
 

\begin{theorem} \label{thm:pseudo-grushin-a=0-full-derivative}
For a fixed $0 \leq \delta < 1$, let $m \in L^\infty \left( \mathbb{R}^{n_1+n_2} \times \mathbb{R}_+ \right)$ be such that 
\begin{align*} 
\left| \partial_{\eta}^{l} m(x, \eta) \right| & \lesssim_{l} (1 + \eta)^{- l}, \quad \text{for all} \quad l \leq Q + 1, \\ 
\text{and} \quad \left| X_x \partial_{\eta}^{l} m(x, \eta) \right| & \lesssim_{l, \delta} (1 + \eta)^{-l + \frac{\delta}{2}}, \quad \text{for all} \quad l \leq Q.
\end{align*}
Assume also that the operator $T = m(x, G_{\varkappa})$ is bounded on $L^2 ( \mathbb{R}^{n_1 + n_2})$. Then, for every compactly supported bounded measurable function $f$ there exists a sparse family $S \subset \mathcal{S}$ such that
\begin{align*}
|T f(x)| \lesssim_{T} \mathcal{A}_{S}f(x),
\end{align*}
for almost every $x \in \mathbb{R}^{n_1+n_2}$.
\end{theorem}


\subsection{Methodology of the proof and organisation of the paper} \label{subsec:intro-methodology-proofs} 
Obtaining sharp quantitative estimates through sparse domination method has produced many fascinating results in recent times in the theory of singular integrals. In this article, one of our main objectives is to establish pointwise sparse domination for Grushin pseudo-multipliers. The approach of obtaining suitable end-point boundedness of some variants of the sharp maximal function and stopping time arguments have played a crucial role in obtaining sparse operator bounds in homogeneous type as well as in the non-homogeneous type spaces. In the present work, we also adapt this strategy. More precisely, in our work we are going to make use of the sparse domination principle developed in \cite{Lerner-Ombrosi-pointwaise-sparse2020, Lorist-pointwaise-sparse2021} (see Theorem \ref{thm:Lorist-general-sparse-principle}). There is a rich and ever growing literature on the existence of quantitative weighted estimates for sparse operators, which explains why one looks for sparse domination of various operators at hand. For our purposes, we state one such result in Theorem \ref{thm:Quantitative-bounds-sparse-operators}. So, in view of Theorem \ref{thm:Lorist-general-sparse-principle}, it boils down to establishing appropriate weak type boundedness of the operator $T$ that we are concerned with as well as an appropriate weak type boundedness of an associated grand maximal truncated operator $\mathcal{M}^{\#}_{T, s}$ (defined in \eqref{def:grand-maximal-truncated-operator}). 

Rest of the article is structured as follows.

\begin{itemize}
\item We collect the preliminary results relevant to our work in Section \ref{sec:prelim}. 

\medskip \item With assumptions on weighted kernel estimates with respect to the control distance associated to Grushin operators $G_{\varkappa}$, we state and prove our pointwise sparse domination results (Theorems \ref{thm:main-sparse}, \ref{thm:main-sparse-more-derivative} and \ref{thm:domination-maximal-M2}) for a general class of operators in Section \ref{sec:kernel-estimates-and-pointwise-sparse-domination}. The main idea is to show that the sharp grand maximal truncation operator $\mathcal{M}^{\#}_{T, s}$ (for a suitable choice of $s$) could be dominated by a maximal operator, and the exact type of the maximal operator is dictated by the assumptions on the weighted kernels, and this allows us to conclude appropriate end-point boundedness for $\mathcal{M}^{\#}_{T, s}$. In our proof, we also suitably make use of a mean-value inequality associated with the sub-Riemannian structure of the Grushin metric. It is an elementary
property satisfied by most reasonable sub-Riemannian metrics, but for the convenience of readers, we explain it in Section \ref{sec:prelim} in our context (see the discussion leading to the formulation of Lemma \ref{lem:general-grushin-Mean-value}). 

\medskip \item In Section \ref{sec:direct-pseudo-grushin}, we specialise to the case of the Grushin pseudo-multipliers. In order to do so, we prove the weighted (gradient) Plancherel estimates for the Grushin pseudo-multiplier operators, and with that our results (Theorems \ref{thm:pseudo-grushin-a=0-unweighted}, \ref{thm:pseudo-grushin-a=0-less-derivative} and \ref{thm:pseudo-grushin-a=0-full-derivative}) follow from results of Section \ref{sec:kernel-estimates-and-pointwise-sparse-domination}. Our proofs of weighted (gradient) Plancherel estimates are of self-interest, and we systematically document these in Subsection \ref{subsec:gradient-weighted-Plancherel-estimates}. While the ones for the integral kernels follow quite easily from results of the existing literature (\cite{AnhBuiDuongSpectralMultipliersBesovTriebelLizorkin,DuongOuhabazSikoraWeightedPlancherel2002JFA}) on kernels corresponding to the spectral multipliers, we are able to show that by making an appropriate use of the gradient estimates for the heat kernels (see estimates \eqref{est:derivative-bounds-heat-kernel-grushin}), it is possible to repeat ideas of \cite{AnhBuiDuongSpectralMultipliersBesovTriebelLizorkin,DuongOuhabazSikoraWeightedPlancherel2002JFA} in establishing analogous weighted Plancherel estimates for the gradients of the integral kernels of spectral multipliers. From this, the estimates for integral kernels of the pseudo-multipliers easily follow. 
\end{itemize}


\subsection{Notations and parameters} \label{subsec:notations} 
In the following list, we have written most of the notations and parameters that are being used in this paper. 
\begin{itemize} 
\item $\mathbb{N} = \{0, 1, 2, 3, \ldots\}$, and $\mathbb{R}_+ = [0, \infty)$. 

\item For $A, B >0$, by the expression $A \lesssim B$ we mean $A \leq C B$ for some $C > 0$. Whenever the implicit constant $C$ may depend on $\epsilon$, we write $A \lesssim_{\epsilon} B$. When $A \lesssim B$ and $B \lesssim A$, we write $A \sim B$.

\item For a Banach space $Y$, we write $\mathcal{B} \left( Y \right) $  for the Banach space of all bounded linear operators on $Y$ and $\| T \|_{op}$ denotes the operator norm of $T \in \mathcal{B} \left( Y \right)$. 

\item We write $x = (x^{\prime}, x^{\prime \prime}) \in \mathbb{R}^{n_1} \times \mathbb{R}^{n_2} \, = \mathbb{R}^{n_1 + n_2}$. 

\item For each $\varkappa \in \{ 1, 2, 3, \ldots\}$, the homogeneous dimension of the space $\mathbb{R}^{n_1 + n_2}$ with respect to the Grushin operator $G_{\varkappa}$ is $Q = n_1 + (1+\varkappa) n_2$. 

\item Whenever $\mu = (\mu_1, \ldots,\mu_{n_1})$ belongs to $\mathbb{N}^{n_1}$ we  write $|\mu| = \sum_{j=1}^{n_1}\mu_j$. For a general vector $c = (c_1, \ldots, c_{n_1})$ from $\mathbb{R}^{n_1}$, we have $|c| = (\sum_{j=1}^{n_1} |c_j|^2)^{1/2}$ and $|c|_1 = \sum_{j=1}^{n_1} |c_j|$. 

\item  $ \mathfrak{r}, R_0, \eta$ etc always correspond to elements of $\mathbb{R}_+$. 

\item  $ l, j, N$ etc always correspond to elements of $\mathbb{N}$. 

\item $\mu, \nu, \gamma, \alpha$ etc always correspond to elements of  $\mathbb{N}^{n_1}$. 

\item $\beta, \beta_j$ etc always correspond to elements of $\mathbb{N}^{n_2}$. 

\item $\Gamma, \Gamma_j $ etc always correspond to elements of $\mathbb{N}^{n_0}$, where $n_0 = n_1 + n_2 \binom{\varkappa + n_1 - 1}{n_1 - 1}$. In particular, when $\varkappa = 1$, we have $n_0 = n_1 + n_1 n_2$. 

\item  $\lambda $ represents the element of $\mathbb{R}^{n_2}$. 

\item When we talk about the case of the Euclidean space or for the Hermite operator, we always take $\mathbb{R}^n$ as our underlying space and use the indices as tabulated above with the convention of $n = n_1$. 
\end{itemize}  


\section{Preliminaries and basic results} \label{sec:prelim}

In this section, we write down the preliminary details relevant to the subject matter of this article.  


\subsection{Control distance for Grushin operators \texorpdfstring{$G_{\varkappa}$}{}} 
The control distance $\tilde{d} (x,y)$ (see equation (11) of \cite{RobinsonSikoraDegenerateEllipticOperatorsGrushinTypeMathZ2008}) associated with the Grushin operator $G_{\varkappa}$ is given by 
\begin{align} \label{def:grushin-control-metric}
\tilde{d} (x,y) = \sup_{\psi \in \mathcal{E}} \left| \psi(x) - \psi(y) \right|, 
\end{align}
where $\mathcal{E} = \left\{ \psi \in W^{1, \infty} (\mathbb{R}^{n_1 + n_2}) : \sum\limits_{1 \leq j \leq n_1} \left| X_{j} \psi \right|^2 + \sum\limits_{|\alpha| = \varkappa} \sum\limits_{1 \leq k \leq n_2} \left| X_{\alpha, k} \psi\right|^2 \leq 1 \right\}.$

It was established in Proposition $5.1$ of \cite{RobinsonSikoraDegenerateEllipticOperatorsGrushinTypeMathZ2008} that $\tilde{d}$ has the following asymptotics: 
\begin{align} \label{def:grushin-metric-asymp}
\tilde{d} (x,y) \sim d(x,y) := \left|x^{\prime} - y^\prime \right| + 
\begin{cases}
\frac{\left|x^{\prime \prime} - y^{\prime \prime}\right|}{\left(\left|x^{\prime} \right| + \left|y^\prime \right|\right)^{\varkappa}} &\textup{ if } \left|x^{\prime \prime} - y^{\prime \prime}\right|^{1/(1+\varkappa)} \leq \left|x^{\prime} \right| + \left|y^\prime \right| \\
\left|x^{\prime \prime} - y^{\prime \prime}\right|^{1/(1+\varkappa)} &\textup{ if } \left|x^{\prime \prime} - y^{\prime \prime}\right|^{1/(1+\varkappa)} \geq \left|x^{\prime} \right| + \left|y^\prime \right|. 
\end{cases}
\end{align}

Since $\tilde{d}$ is a metric, it follows that $d$ is a quasi-metric, that is, there exists a constant $C_0 > 1$ such that 
\begin{align} \label{def:quasi-constant}
d(x,y) \leq C_0 \left( d(x,z) + d(z,y) \right)    
\end{align}
for all $x, y, z \in \mathbb{R}^{n_1+n_2}$. 

Using the asymptotic estimates from \eqref{def:grushin-metric-asymp}, it can be easily verified that $ \left(\mathbb{R}^{n_1+n_2}, d \right)$ is a complete metric space (equivalently, $ (\mathbb{R}^{n_1+n_2}, \tilde{d} )$ is a complete quasi metric space).

By abuse of notation, throughout this article, we refer to $d(x,y)$ of \eqref{def:grushin-metric-asymp} as the Grushin metric. Let $B(x, r) := \{y \in \mathbb{R}^{n_1+n_2} : d(x, y)< r\}$ and $|B(x, r)|$ denote the Lebesgue measure of the ball $B(x, r)$ in $\mathbb{R}^{n_1 + n_2}$. Again, it was shown in Proposition $5.1$ of \cite{RobinsonSikoraDegenerateEllipticOperatorsGrushinTypeMathZ2008} that 
\begin{align} \label{grushin-ball-growth} 
\left|B(x,r)\right| \sim r^{n_1 + n_2} \max\{r, |x^{\prime}|\}^{\varkappa \, n_2},
\end{align}
for all $x\in \mathbb{R}^{n_1+n_2}$ and $r>0$. The ball volume estimates of \eqref{grushin-ball-growth} imply that the space $(\mathbb{R}^{n_1+n_2}, d, |\cdot|)$ is a homogeneous type metric space, that is, the underlying measure (the Lebesgue measure of $\mathbb{R}^{n_1 + n_2}$) satisfies the following doubling condition 
\begin{align} \label{ineq:doubling-condition-grushin-metric}
|B(x, sr)| \lesssim_{n_1, n_2, \varkappa} (1+s)^{Q} |B(x,r)|
\end{align}
for $s>0$, where 
$Q$ stands for the homogeneous dimension $n_{1}+(1+\varkappa) n_{2}$. 

Furthermore, since $(\mathbb{R}^{n_1+n_2}, d, |\cdot|)$ and $(\mathbb{R}^{n_1+n_2}, \tilde{d}, |\cdot|)$ are doubling metric measure spaces, it follows that open balls (of arbitrary finite radius) in any of these spaces are totally bounded (see Corollary 2.3 of \cite{Soria-Tradacete-doubling-constant-2019}). As a consequence, we get that the closed balls (of arbitrary finite radius) in $(\mathbb{R}^{n_1+n_2}, d, |\cdot|)$ or $(\mathbb{R}^{n_1+n_2}, \tilde{d}, |\cdot|)$ are compact. 

From geometric point of view, space $(\mathbb{R}^{n_1+n_2}, \tilde{d})$ carries a sub-Riemannian structure $g$ induced by the orthonormal frame $\{X_{j}, X_{\alpha, k} : 1 \leq j \leq n_1$, $1 \leq k \leq n_2$, and $\alpha \in \mathbb{N}^{n_1}$ with $|\alpha| = \varkappa \}$, where $X_{j}$ and $X_{\alpha, k}$ are given by \eqref{first-order-grad}. It is easy to see that these vector fields satisfy H\"ormander's hypoellipticity condition, and therefore it follows from Lemma $3.2$ and Theorem $7.1$ of \cite{Strichartz1986-Sub-Riemann} that given any $x, y \in \mathbb{R}^{n_1+n_2}$, there always exists a length minimising curve (geodesic in the sub-Riemannian set-up) joining $x$ and $y$. Moreover, it was shown in Section 3 of \cite{Jerison-Sanchez-Calle-85} that the distance $\tilde{d}(x,y)$ defined by \eqref{def:grushin-control-metric} equals the infimum of the length minimising curves joining $x$ and $y$. With that, we can invoke the proof of Lemma 3.2 of \cite{Strichartz1986-Sub-Riemann} to argue that given $x \in \mathbb{R}^{n_1+n_2}$ and $r>0$, if $\tilde{d}(x,y) < r$ then the length minimising curve $\gamma_0$ between $x$ and $y$ lies in the ball $\tilde{B}(x, 2r) := \{z \in \mathbb{R}^{n_1+n_2} : \tilde{d} (x, z)< 2r\}$. Moreover, 
\begin{align} \label{est:mean-value}
\left| f(y)-f(x)\right| = \left|\int_{0}^{1} \left(f\circ \gamma_0 \right)'(t) \, dt  \right|& \leq \int_{0}^{1} \left| \left(f\circ \gamma_0 \right)'(t) \right| dt  \\
\nonumber & \leq \int_{0}^{1} \left| df_{\gamma_0(t)} (\gamma_0' (t)) \right| dt \\ 
\nonumber & \leq \int_{0}^{1} \left\| \left( \nabla_g f \right) (\gamma_0 (t)) \right\|_g \left\| \gamma_0' (t) \right\|_g dt \\
\nonumber & = \tilde{d}(x,y) \int_{0}^{1} \left\| \left( \nabla_g f \right) (\gamma_0 (t)) \right\|_g dt \\ 
\nonumber & = \tilde{d}(x,y) \int_{0}^{1} \left|Xf(\gamma_0(t)) \right| \, dt.
\end{align}

Since $d \sim \tilde{d}$, we can use estimate \eqref{est:mean-value} to write an analogous estimate with respect to $d$. 
\begin{lemma}[Mean-value estimate]  \label{lem:general-grushin-Mean-value}
There exist constants $C_{1, \varkappa}, C_{2, \varkappa} > 0$ (depending also on $n_1$ and $n_2$) such that for any ball $B(x_0, r)$ and points $x, y \in B(x_0, r)$, there exists a $\tilde{d}$-length minimising curve $\gamma_0 : [0, 1] \to B(x_0, C_{1, \varkappa} \, r)$ joining $x$ to $y$, and for any $f \in C^1 \left( B(x_0, C_{1, \varkappa} \, r) \right)$,  
\begin{align} \label{est:general-grushin-Mean-value} 
\left| f(x) - f(y) \right| \leq & C_{2, \varkappa} \, d(x, y) \int_{0}^{1} \left|Xf(\gamma_0(t)) \right| \, dt. 
\end{align}
\end{lemma}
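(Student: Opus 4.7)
The plan is to reduce the lemma to the already-derived mean-value inequality \eqref{est:mean-value}, using only the equivalence $d \sim \tilde{d}$ recorded in \eqref{def:grushin-metric-asymp} and the quasi-metric inequality \eqref{def:quasi-constant}. Let $c \in (0,1)$ and $C \geq 1$ be constants such that $c \, \tilde{d}(u,v) \leq d(u,v) \leq C \, \tilde{d}(u,v)$ for all $u, v$.

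First I would control $\tilde{d}(x,y)$ in terms of $r$. If $x, y \in B(x_0, r)$, then by \eqref{def:quasi-constant} one has $d(x,y) \leq 2 C_0 r$, and hence $\tilde{d}(x,y) \leq (2 C_0/c) \, r$. Next, applying the discussion preceding the lemma (which invokes Lemma 3.2 and Theorem 7.1 of \cite{Strichartz1986-Sub-Riemann} together with Section 3 of \cite{Jerison-Sanchez-Calle-85}), pick a $\tilde{d}$-length minimising curve $\gamma_0 : [0,1] \to \mathbb{R}^{n_1+n_2}$ from $x$ to $y$. By the very same discussion, $\gamma_0(t) \in \tilde{B}(x, 2\tilde{d}(x,y))$ for every $t \in [0,1]$, so
\begin{align*}
\tilde{d}(\gamma_0(t), x) \leq 2 \tilde{d}(x,y) \leq \frac{4 C_0}{c} r.
\end{align*}

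Next I would transfer this containment from a $\tilde{d}$-ball around $x$ to a $d$-ball around $x_0$. From the above and from the equivalence of the two distances, $d(\gamma_0(t), x) \leq C \tilde{d}(\gamma_0(t), x) \leq (4 C_0 C /c) r$, and combining with $d(x, x_0) < r$ via \eqref{def:quasi-constant},
\begin{align*}
d(\gamma_0(t), x_0) \leq C_0 \left( d(\gamma_0(t), x) + d(x, x_0) \right) \leq C_0 \left( \tfrac{4 C_0 C}{c} + 1 \right) r =: C_{1, \varkappa} \, r,
\end{align*}
so $\gamma_0$ takes values in $B(x_0, C_{1,\varkappa} r)$, as required.

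Finally, I would apply \eqref{est:mean-value} to $f \in C^1(B(x_0, C_{1,\varkappa} r))$, which is valid since $\gamma_0$ stays inside this ball, obtaining
\begin{align*}
|f(x) - f(y)| \leq \tilde{d}(x,y) \int_0^1 |Xf(\gamma_0(t))| \, dt \leq c^{-1} d(x,y) \int_0^1 |Xf(\gamma_0(t))| \, dt,
\end{align*}
which gives the claim with $C_{2, \varkappa} = c^{-1}$. The proof is really a book-keeping exercise: no new geometric input is needed beyond the cited results, and the only mild subtlety is ensuring that the $\tilde{d}$-tube around $\gamma_0$ guaranteed by the Strichartz argument translates to a $d$-ball around the fixed centre $x_0$, which is exactly what the quasi-metric inequality plus $d \sim \tilde{d}$ delivers. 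I do not expect any genuine obstacle here.
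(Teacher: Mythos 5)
Your proof is correct and takes essentially the same route the paper sketches: the paper states the lemma as an immediate consequence of the derivation \eqref{est:mean-value} together with $d \sim \tilde{d}$, and your argument simply makes precise the bookkeeping (bounding $\tilde d(x,y)$ by a multiple of $r$, transporting the Strichartz containment of $\gamma_0$ from a $\tilde d$-ball around $x$ to a $d$-ball around $x_0$ via the quasi-triangle inequality, and replacing $\tilde d(x,y)$ by $c^{-1}d(x,y)$ in the final estimate). No gap; the constants you extract, $C_{1,\varkappa}=C_0(4C_0C/c+1)$ and $C_{2,\varkappa}=c^{-1}$, depend only on $n_1,n_2,\varkappa$ exactly as claimed.
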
 


\subsection{Weighted Plancherel estimates} 
\label{subsec:known-weighted-Plancherel}
Spectral multipliers corresponding to a self-adjoint positive definite operator (with appropriate heat kernel bounds) have a long history. In particular, we refer to \cite{DuongOuhabazSikoraWeightedPlancherel2002JFA} where the authors studied spectral multipliers on general homogeneous type metric spaces. For our convenience, we simply recall it for Grushin operators $G_{\varkappa}$. Let $p_t$ (for $t>0$) denote the heat kernel associated with the operator $G_{\varkappa}$, that is, $p_t(x,y)$ is the integral kernel of the operator $e^{-tG_{\varkappa}}$. It is well known (see, Corollary 6.6 of \cite{RobinsonSikoraDegenerateEllipticOperatorsGrushinTypeMathZ2008}) that $p_t$ satisfies the following Gaussian type bounds: there exist constants $b, C > 0$ such that 
\begin{align} \label{est:heat-kernel-pointwise-Grushin} 
\left| p_{t}(x,y) \right| \leq C |B(x,t^{1/2})|^{-1} \exp \left( -b   d(x,y)^{2} / t \right). 
\end{align}

It was shown in \cite{DuongOuhabazSikoraWeightedPlancherel2002JFA} that as a consequence of estimate \eqref{est:heat-kernel-pointwise-Grushin}, one has the following Plancherel estimates for kernels corresponding to symbols with compact support. Let $K_{m (G_{\varkappa})}$ denotes the integral kernel of the Grushin multiplier operator $m (G_{\varkappa})$, then 
\begin{align} \label{est:UnweightedPlancherelGrushin} 
\| K_{m (G_{\varkappa})}(x, \cdot)\|_{2} \lesssim |B(x, R^{-1})|^{-1/2} \|m\|_{L^{\infty}}, 
\end{align} 
for every bounded Borel function $m$ supported on $[0, R^2]$ for any $R>0$. 

Building on estimate \eqref{est:UnweightedPlancherelGrushin}, weighted Plancherel estimates of the following type were established in \cite{DuongOuhabazSikoraWeightedPlancherel2002JFA}: 
\begin{lemma}[Lemma 4.3, \cite{DuongOuhabazSikoraWeightedPlancherel2002JFA}] \label{lem:L2-weighted-Plancherel-estimate-grushin} 
For every $\mathfrak{r}, \epsilon >0$, we have 
\begin{align*} 
\left\| (1+Rd(x,\cdot))^\mathfrak{r} K_{m(G_{\varkappa})} (x, \cdot) \right\|_2 \lesssim_{\mathfrak{r}, \epsilon} |B(x,R^{-1})|^{-1/2} \left\| m ({R^2}~ \cdot ) \right\|_{W_{\mathfrak{r} + \epsilon}^{\infty}},
\end{align*} 
for every bounded Borel function $m$ supported on $[0, R^2]$ for any $R>0$. 
\end{lemma}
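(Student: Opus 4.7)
The plan is to follow the classical strategy of DuongOuhabazSikoraWeightedPlancherel2002JFA: combine the unweighted Plancherel estimate \eqref{est:UnweightedPlancherelGrushin} with the finite propagation speed of the wave operator $\cos(t\sqrt{G_{\varkappa}})$, then interpolate to get fractional $\mathfrak{r}$.

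First I would reduce to $R=1$ by exploiting the anisotropic dilation $\delta_{s}(x^{\prime},x^{\prime\prime}) = (sx^{\prime}, s^{\varkappa+1}x^{\prime\prime})$: since $G_{\varkappa}$ is homogeneous of degree $2$ under $\delta_{s}$, one has $d(\delta_{s}x, \delta_{s}y) = s\,d(x,y)$ and $|\delta_{s}(E)| = s^{Q}|E|$, so a change of variables shows that the general-$R$ inequality is equivalent to its $R=1$ counterpart applied to the rescaled symbol $m(R^{2}\cdot)$. The problem then reduces to proving, for every $m$ supported in $[0,1]$,
\begin{align*}
\left\|(1+d(x,\cdot))^{\mathfrak{r}} K_{m(G_{\varkappa})}(x,\cdot)\right\|_{2} \;\lesssim_{\mathfrak{r},\epsilon}\; |B(x,1)|^{-1/2}\,\|m\|_{W^{\infty}_{\mathfrak{r}+\epsilon}}.
\end{align*}

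Next I would extend $\Phi(\xi) := m(\xi^{2})$ to an even function on $\mathbb{R}$ and use the cosine Fourier representation
\begin{align*}
m(G_{\varkappa}) \;=\; \frac{1}{2\pi}\int_{\mathbb{R}} \widehat{\Phi}(t)\, \cos\!\left(t\sqrt{G_{\varkappa}}\right) dt,
\end{align*}
combined with the finite propagation property that the Schwartz kernel of $\cos(t\sqrt{G_{\varkappa}})$ is supported in $\{(x,y): d(x,y)\le |t|\}$; this is a standard consequence of $d$ being the control distance associated with the sub-Laplacian $G_{\varkappa}$ (see the discussion following \eqref{def:grushin-control-metric}). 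A dyadic decomposition $\widehat{\Phi} = \sum_{j\ge 0}(\widehat{\Phi})_{j}$ with $(\widehat{\Phi})_{j}$ supported in $\{|t|\sim 2^{j}\}$ produces kernel pieces supported in $\{d(x,\cdot)\lesssim 2^{j}\}$, on which the weight $(1+d(x,\cdot))^{\mathfrak{r}}$ is bounded by $2^{j\mathfrak{r}}$. The unweighted Plancherel \eqref{est:UnweightedPlancherelGrushin} applied to each dyadic spectral piece of $m$ controls its $L^{2}$ kernel norm by $|B(x,1)|^{-1/2}$ times the sup norm, the doubling condition \eqref{ineq:doubling-condition-grushin-metric} absorbs the polynomial factor in $2^{j}$ arising from the conversion between $|B(x,2^{j})|$ and $|B(x,1)|$, and Bernstein-type estimates give a decay factor $\lesssim 2^{-j(\mathfrak{r}+\epsilon)}\|m\|_{W^{\infty}_{\mathfrak{r}+\epsilon}}$. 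Summing the resulting geometric series in $j$ converges precisely because of the extra $\epsilon$ derivatives.

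The principal technical obstacle is that the dyadic pieces produced via LP-decomposition of $\widehat{\Phi}$ are not themselves compactly supported as required by \eqref{est:UnweightedPlancherelGrushin}, so one has to either additively decompose $m$ on the spectral side into pieces supported in intervals of size $\sim 1$ (mollified so that their Fourier transforms retain the desired dyadic localisation) or apply the unweighted Plancherel in a Sobolev-smoothed form. Once the integer case is in hand, fractional $\mathfrak{r}$ is handled by complex interpolation on the analytic family $(1+d(x,\cdot))^{z}$ between consecutive integer values of $\operatorname{Re}(z)$. Since the two abstract hypotheses underlying Lemma $4.3$ of DuongOuhabazSikoraWeightedPlancherel2002JFA, namely the doubling condition \eqref{ineq:doubling-condition-grushin-metric} and the Gaussian heat kernel bound \eqref{est:heat-kernel-pointwise-Grushin}, are both available in the Grushin setting, the statement ultimately follows as a direct specialisation of that reference, with the sketch above only indicating why each ingredient carries over.
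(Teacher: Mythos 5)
The paper does not prove this lemma itself; it cites it directly from Lemma 4.3 of \cite{DuongOuhabazSikoraWeightedPlancherel2002JFA}, and your closing paragraph---noting that the doubling condition and Gaussian heat kernel bounds hold for $G_{\varkappa}$, so the abstract result specialises---is exactly the logic the authors rely on. That part is correct.

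However, the sketch preceding it mischaracterises what the cited reference actually does. You describe the ``classical strategy of \cite{DuongOuhabazSikoraWeightedPlancherel2002JFA}'' as the wave-operator/finite-propagation-speed argument $m(G_\varkappa)=\frac{1}{2\pi}\int\widehat{\Phi}(t)\cos(t\sqrt{G_\varkappa})\,dt$ combined with dyadic localisation of $\widehat\Phi$. That is a genuinely different technique (in the spirit of Cowling--Sikora or Christ--Sogge). The proof of Lemma 4.3 in \cite{DuongOuhabazSikoraWeightedPlancherel2002JFA} instead writes $m(\eta)=F(\eta)e^{-\eta/R^2}$ with $F(\eta)=m(\eta)e^{\eta/R^2}$, applies the Fourier inversion formula to express the kernel as $K_{m(G_\varkappa)}(x,y)=\frac{1}{2\pi}\int_{\mathbb{R}}\widehat{F}(\tilde\eta)\,p_{(1-i\tilde\eta)R^{-2}}(x,y)\,d\tilde\eta$, and then exploits the weighted $L^2$ estimates on the \emph{complexified heat kernel} $p_{(1-i\tilde\eta)R^{-2}}$ (which grow polynomially in $|\tilde\eta|$) together with a final interpolation against the unweighted Plancherel estimate to trade the resulting loss of $1/2$ derivative back down to $\mathfrak{r}+\epsilon$. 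You can see this exact scheme echoed in the paper's own proof of Lemma \ref{lem-pseudo:grad-weighted-L^2-estimate}, which it presents as ``working with the ideas of the proof of Lemma 4.3'' of that reference. The heat-semigroup route avoids entirely the ``principal technical obstacle'' you flag---the pieces $\Phi_j$ not having compact spectral support---because in the Fourier-inversion decomposition the spectral-side building blocks remain the globally defined operators $e^{(i\tilde\eta-1)R^{-2}G_\varkappa}$ and there is no compact-support hypothesis to satisfy.

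Your wave-propagation sketch is not wrong in principle and has been used elsewhere to prove similar weighted Plancherel bounds, but as written it leaves a real gap: the unweighted Plancherel estimate \eqref{est:UnweightedPlancherelGrushin} requires compactly supported symbols and cannot be applied directly to $\Phi_j(\sqrt{\cdot})$, and the remedy you mention in passing (mollified additive decomposition, or a ``Sobolev-smoothed'' form) is the heart of the argument, not a side remark. If you wish to carry out the wave-based proof you need to split $\Phi_j=\zeta\Phi_j+(1-\zeta)\Phi_j$ against a fixed spectral cutoff $\zeta$ and handle the rapidly decaying tail $(1-\zeta)\Phi_j$ quantitatively. Alternatively, if the goal is simply to justify the citation, the closing paragraph of your proposal already suffices, and the sketch before it should be removed or replaced by the heat-semigroup argument that \cite{DuongOuhabazSikoraWeightedPlancherel2002JFA} actually uses.
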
 

Following the ideas of \cite{DuongOuhabazSikoraWeightedPlancherel2002JFA}, the authors of \cite{AnhBuiDuongSpectralMultipliersBesovTriebelLizorkin} recently studied pointwise weighted kernel estimates, establishing the following analogue of the above lemma: 
\begin{lemma}[Lemma 4.3, \cite{AnhBuiDuongSpectralMultipliersBesovTriebelLizorkin}] \label{lem:pointwise-weighted-Plancherel-estimate-grushin} 
For every $\mathfrak{r} , \epsilon >0$, we have 
\begin{align*} 
(1 + R d(x, y))^\mathfrak{r}  K_{m(G_{\varkappa})}(x, y) \lesssim_{\mathfrak{r} , \epsilon} |B(x, R^{-1})|^{-1/2} |B(y, R^{-1})|^{-1/2} \left\| m ({R^2}~ \cdot ) \right\|_{W_{\mathfrak{r}  + \epsilon}^{\infty}}, 
\end{align*} 
for every bounded Borel function $m$ supported on $[0, R^2]$ for any $R>0$. 
\end{lemma}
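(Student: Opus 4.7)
The plan is to derive the pointwise estimate from the $L^2$ weighted Plancherel bound of Lemma~\ref{lem:L2-weighted-Plancherel-estimate-grushin} via a standard composition argument with the heat semigroup, as is familiar from homogeneous-space spectral multiplier theory (cf.~\cite{DuongOuhabazSikoraWeightedPlancherel2002JFA}). The upgrade from the $L^2$-type bound to the pointwise bound naturally produces the second volume factor $|B(y, R^{-1})|^{-1/2}$ on the right-hand side.

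First I would introduce the auxiliary symbol $\tilde{m}(\eta) := m(\eta) \, e^{\eta/R^2}$, which is still supported on $[0, R^2]$ and satisfies $\|\tilde{m}(R^2 \cdot)\|_{W_{\mathfrak{r}+\epsilon}^\infty} \lesssim \|m(R^2 \cdot)\|_{W_{\mathfrak{r}+\epsilon}^\infty}$, because multiplication by the smooth, bounded function $e^{(\cdot)}$ is bounded on Sobolev norms over the compact interval $[0,1]$. The functional calculus identity $m(G_\varkappa) = \tilde{m}(G_\varkappa) \circ e^{-G_\varkappa/R^2}$ then gives, at the level of kernels,
\begin{align*}
K_{m(G_\varkappa)}(x,y) = \int_{\mathbb{R}^{n_1+n_2}} K_{\tilde{m}(G_\varkappa)}(x,z) \, p_{1/R^2}(z,y) \, dz.
\end{align*}
I would next multiply by $(1 + R d(x,y))^{\mathfrak{r}}$ and apply the quasi-triangle inequality \eqref{def:quasi-constant} in the form $(1 + R d(x,y))^{\mathfrak{r}} \lesssim_{\mathfrak{r}} (1 + R d(x,z))^{\mathfrak{r}} + (1 + R d(z,y))^{\mathfrak{r}}$, splitting the integral accordingly and estimating each piece by Cauchy--Schwarz. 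For the first piece, Lemma~\ref{lem:L2-weighted-Plancherel-estimate-grushin} applied to $\tilde{m}$ bounds $\|(1+R d(x,\cdot))^{\mathfrak{r}} K_{\tilde{m}(G_\varkappa)}(x,\cdot)\|_{L^2}$ by $|B(x, R^{-1})|^{-1/2} \|m(R^2 \cdot)\|_{W_{\mathfrak{r}+\epsilon}^\infty}$, while the semigroup identity together with the on-diagonal specialisation of \eqref{est:heat-kernel-pointwise-Grushin} gives $\|p_{1/R^2}(\cdot,y)\|_{L^2}^2 = p_{2/R^2}(y, y) \lesssim |B(y, R^{-1})|^{-1}$. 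For the second piece, the unweighted bound \eqref{est:UnweightedPlancherelGrushin} handles $\|K_{\tilde{m}(G_\varkappa)}(x,\cdot)\|_{L^2}$, and the polynomial weight $(1+ R d(z, y))^{\mathfrak{r}}$ is absorbed into the Gaussian decay of $p_{1/R^2}(z, y)$.

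The main delicate step is this last absorption, which requires showing
\begin{align*}
\int_{\mathbb{R}^{n_1+n_2}} (1 + R d(z,y))^{2\mathfrak{r}} \, |p_{1/R^2}(z,y)|^2 \, dz \lesssim |B(y, R^{-1})|^{-1}.
\end{align*}
This follows from the pointwise Gaussian bound \eqref{est:heat-kernel-pointwise-Grushin}, combined with the doubling property \eqref{ineq:doubling-condition-grushin-metric} to convert the $z$-centred ball volume $|B(z, R^{-1})|$ naturally appearing on the right side into the $y$-centred one $|B(y, R^{-1})|$ at the price of a polynomial factor in $R d(z,y)$, which is in turn readily killed by the exponential factor $\exp(-b R^2 d(z,y)^2)$. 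Modulo this routine computation, assembling the two pieces produces the claimed pointwise weighted estimate.
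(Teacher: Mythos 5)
Your proof is correct, and it is essentially the standard argument from the homogeneous-space spectral multiplier literature (Duong--Ouhabaz--Sikora, Bui--Duong), which is precisely where the paper cites this lemma from without providing an independent proof. The composition $m(G_\varkappa)=\tilde m(G_\varkappa)\circ e^{-G_\varkappa/R^2}$ with $\tilde m(\eta)=m(\eta)e^{\eta/R^2}$, followed by a quasi-triangle splitting of $(1+Rd(x,y))^{\mathfrak r}$ and Cauchy--Schwarz in each piece, is exactly the mechanism that upgrades the $L^2$ bound of Lemma~\ref{lem:L2-weighted-Plancherel-estimate-grushin} to the pointwise statement and produces the second factor $|B(y,R^{-1})|^{-1/2}$ via the on-diagonal/Gaussian behaviour of the heat kernel. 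The only caveat worth noting is that your argument implicitly uses the standard form of the Gaussian upper bound, $|p_t(x,y)|\lesssim|B(x,t^{1/2})|^{-1}\exp(-bd(x,y)^2/t)$, rather than the $-1/2$ exponent that appears as an apparent typo in display \eqref{est:heat-kernel-pointwise-Grushin}; with that understood, your absorption estimate
\begin{align*}
\int_{\mathbb{R}^{n_1+n_2}}(1+Rd(z,y))^{2\mathfrak r}\,|p_{1/R^2}(z,y)|^2\,dz\lesssim|B(y,R^{-1})|^{-1}
\end{align*}
follows exactly as you describe, by dyadically decomposing in $Rd(z,y)$, using doubling to recentre the ball volume, and letting the Gaussian factor dominate the resulting polynomial in $Rd(z,y)$.
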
 

Interpolating estimates of Lemmas \ref{lem:L2-weighted-Plancherel-estimate-grushin} and \ref{lem:pointwise-weighted-Plancherel-estimate-grushin}, one can conclude that given $p \in [2, \infty]$ and $\mathfrak{r}, \epsilon > 0$, 
\begin{align*} 
\left\| |B(\cdot, R^{-1})|^{1/2 - 1/p} \, (1+Rd(x,\cdot))^\mathfrak{r} K_{m (G_\varkappa)}(x, \cdot) \right\|_p \lesssim_{\mathfrak{r}, \epsilon, p} |B(x,R^{-1})|^{-1/2} \left\| m ({R^2} \cdot ) \right\|_{W_{\mathfrak{r}  + \epsilon}^{\infty}}, 
\end{align*} 
for every bounded Borel function $m$ supported on $[0, R^2]$ for any $R>0$. 

In \cite{DziubanskiSikoraLieApproach}, the authors systematically explained that given an operator of the type $G_{\varkappa}$, one can associate a nilpotent Lie group with it. In fact, the method of \cite{DziubanskiSikoraLieApproach} is applicable for a larger class of operators which include $G_{\varkappa}$. Now, the estimates for the gradients of the heat kernel of such Lie groups are well known (see, for example, \cite{VaropoulosSaloffCosteCoulhonAnalysisGeometryGroupsBook92}), and with that one can deduce analogous estimates for the gradients of the heat kernel of Grushin operator $G_{\varkappa}$. See, for example, Corollary 4.1 in \cite{DziubanskiSikoraLieApproach}, where the first order gradient estimate for the Poisson kernel is deduced. It is explicit from the methodology that the same could be done for the heat kernel and its gradients (of arbitrary order). In particular, one can show that the following gradient estimates hold true for every $\Gamma \in \mathbb{N}^{n_0}$: 
\begin{align} \label{est:derivative-bounds-heat-kernel-grushin} 
\left| \left(X^{\Gamma}_x p_{t} (x, y) \right) (x) \right| & \lesssim_{\Gamma} t^{-|\Gamma|/2} \left| B \left(x, t^{1/2} \right) \right|^{-1} \exp \left(-b d(x,y)^2 / t \right), \\ 
\nonumber \text{and} \left| \left(X^{\Gamma}_y p_{t} (x, y) \right) (y) \right| & \lesssim_{\Gamma} t^{-|\Gamma|/2} \left| B \left(x, t^{1/2} \right) \right|^{-1} \exp \left(-b d(x,y)^2 / t  \right). 
\end{align}

Again, with the help of the above mentioned transference technique, the authors of \cite{RobinsonSikoraMathZ2016} studied boundedness of Riesz transforms $X^{\Gamma} G_{\varkappa}^{-|\Gamma|/2}$ (of arbitrary order $\Gamma$) associated with the Grushin operator $G_{\varkappa} = - \Delta_{x^{\prime}} - |x^{\prime}|^{2 \varkappa} \Delta_{x^{\prime \prime}}$ (see Theorem 3.1 in \cite{RobinsonSikoraMathZ2016}). It is not difficult to verify that their technique is also applicable for the operator $G_{\varkappa} = - \Delta_{x^{\prime}} - \sum_{j = 1}^{n_1} {x_j^{\prime}}^{2 \varkappa} \Delta_{x^{\prime \prime}}$. 

Summarising, with $G_{\varkappa} = - \Delta_{x^{\prime}} - |x^{\prime}|^{2 \varkappa} \Delta_{x^{\prime \prime}}$ or $G_{\varkappa} = - \Delta_{x^{\prime}} - \sum_{j = 1}^{n_1} {x_j^{\prime}}^{2 \varkappa} \Delta_{x^{\prime \prime}}$, we have 
\begin{align} \label{est:L2-boundedness-Riesz-transforms}
\| X^{\Gamma} G_{\varkappa}^{-|\Gamma|/2} \|_{L^p \left( \mathbb{R}^{n_1+n_2} \right) \to L^p \left( \mathbb{R}^{n_1+n_2} \right)} < \infty
\end{align}
for any $1 < p < \infty$, and also $\| X^{\Gamma} G_{\varkappa}^{-|\Gamma|/2} \|_{L^1 \left( \mathbb{R}^{n_1+n_2} \right) \to L^{1, \infty} \left( \mathbb{R}^{n_1+n_2} \right)} < \infty$. 


\subsection{Homogeneous spaces and \texorpdfstring{$A_p$}{} weights} 
Starting with the pioneering work of Muckenhoupt in \cite{Muckenhoupt-TAMS1972}, the theory of $A_p$ weights has been studied in various aspects. In recent times  obtaining quantitative weighted estimates for operators in Harmonic analysis is an active area of research. The development of the seminal $A_2$-conjecture gave rise to the powerful technique of sparse operators and it has been studied in various contexts even beyond Euclidean spaces. We mention a few here. 

Recall that a metric (or quasi-metric) measure space is of homogeneous type in the sense of Coifman--Weiss \cite{Coifman-Weiss-book-1971}) if the underlying measure satisfies the doubling condition. We have already seen that the space $(\mathbb{R}^{n_1 + n_2}, d, |\cdot|)$ is of homogeneous type. The theory of Calder\'on-Zygmund operators and $A_p$ weights have been studied systematically in the spaces of homogeneous type. The $A_2$ conjecture for the spaces of homogeneous type was established in \cite{Anderson-Vagharshakyan-simple-proof-JGA2014, Nazarov-Reznikov-Volberg-A2-proof-Indiana2013}. In \cite{Hytonen-Perez-Rela-reverse-Holder-JFA2012}, Hyt\"onen--P\'erez--Rela developed sharp reverse H\"older's inequality in spaces of homogeneous type. In recent times, the theory of sparse operators is in fact extended to spaces that need not be of homogeneous type, we refer interested readers to \cite{volberg2018sparse,conde2019nondoubling}. 

Let us recall $A_p$ weights on the homogeneous type space $(\mathbb{R}^{n_1 + n_2}, d, |\cdot|)$. 
\begin{definition} 
For $1 < p <\infty$, we say a weight $w$ on $\mathbb{R}^{n_1+n_2}$ belongs to the class $A_{p}(\mathbb{R}^{n_1+n_2})$ if 
\begin{align*}
[w]_{A_p(\mathbb{R}^{n_1+n_2})} = \sup_{B} \left( \frac{1}{|B|} \int_{B}w~ dx\right) \left( \frac{1}{|B|} \int_{B} w^{1-p^{\prime}} dx \right)^{p-1} <\infty, 
\end{align*} 
where the supremum is taken over all balls $B$ in  the homogeneous type space $(\mathbb{R}^{n_1+n_2}, d, |\cdot|)$. For $p=1$, $w\in A_{1}(\X)$ if there exists a constant $C>0$ such that for all balls $B$, 
\begin{align*}
\frac{1}{|B|} \int_{B}w~ dx \leq C \, \essinf_{x\in B}w(x). 
\end{align*}
\end{definition}

It is not difficult to construct examples of $A_p$ weights in our case. In fact, power weights provide plenty of examples in the case of the Euclidean $A_p$ weights.

\begin{example}
Consider the weights $\omega_{a}(x)=d(0, x)^{-a}$ for $a\in \mathbb{R}$. Then $\omega_{a}\in A_{1}$ if $0\leq a< Q$. To see this, consider any ball $B=B(x_{0}, r)$. If $B\cap B(0, \frac{r}{2})=\emptyset$ then for any $x, y\in B$, $d(0, x)\leq C_{0}(d(x, y)+d(y, 0))\lesssim  C_{0}^2 (r+d(y, 0))\lesssim C_{0}^2 d(y, 0)$, hence
$$\frac{1}{|B|}\int_{B}d(0, x)^{-a}\, dx \lesssim ess\inf_{x\in B} d(x,0)^{-a}.$$
For balls $B$ such that $B\cap B(0, \frac{r}{2})\neq \emptyset$ we proceed as follows. Observe that in this case $B\subset B(0, 2C_{0} r)$ which in turn implies $B(0, 2C_{0} r)\subset B_{\mathbb{R}^{n_1}}(0, c r)\times B_{\mathbb{R}^{n_2}}(0,c r^{(1+\varkappa)})$, where $B_{\mathbb{R}^{n_1}}(0, cr)$ and $B_{\mathbb{R}^{n_2}}(0,c r^{(1+\varkappa)})$ are Euclidean balls in $\mathbb{R}^{n_1}$ and $\mathbb{R}^{n_2}$ respectively. Now
\begin{align*}
\frac{1}{|B|}\int_{B}d(0, x)^{-a}\, dx &\leq \frac{1}{r^{n_1+(1+\varkappa) n_2}}\int_{B_{\mathbb{R}^{n_1}}(0, cr)}\int_{B_{\mathbb{R}^{n_2}}(0,c r^{(1+\varkappa)})}\max\{|x'|, |x''|^{1/(1+\varkappa)}\}^{-a} \ dx'\ dx''\\
&\lesssim \frac{1}{r^{n_1+(1+\varkappa)n_2}}r^{-a/2+n_{1}}r^{-a/2+(1+\varkappa)n_{2}}\lesssim r^{-a} \lesssim ess\inf_{x\in B} d(0, x)^{-a}.
\end{align*}
\end{example}

Since, $v_{0}v_{1}^{1-p}\in A_{p}$ for $v_{i}\in A_{1},$ we can conclude that for $1<p<\infty$, $\omega_{a}(x)=d(0, x)^{-a}\in A_{p}(\X)$ if $-Q(p-1)<a< Q$.


\subsection{Sparse domination} \label{subsec:sparse-domination}
In this article, we address quantitative weighted estimates for Grushin pseudo-multiplier operators with respect to $A_p(\mathbb{R}^{n_1+n_2})$ weights. In order to state the results, let us first recall the definition of sparse families and sparse operators on a homogeneous type space. We also need the following notion of dyadic cubes for a homogeneous type space. For details, we refer to \cite{Christ-lectures-singular-1990,Lorist-pointwaise-sparse2021}. 

\begin{theorem}[\cite{Christ-lectures-singular-1990}]
\label{thm:dyadic-cubes}
There exist absolute constants $0 < h_1 \leq h_2 < \infty, \, 0 < \varepsilon_0, \delta_0 < 1,$ a family of measurable sets $\mathcal{S} = \bigcup_{k\in \mathbb{Z}} \mathcal{S}_k$ (called a dyadic decomposition of $(\mathbb{R}^{n_1+n_2}, d, | \cdot |)$) and a corresponding family of points $\{ c_{\mathcal{Q}} \}_{\mathcal{Q} \in \mathcal{S}}$ that satisfy the following properties:
\begin{enumerate}[(i)]
\item $\left| \mathbb{R}^{n_1+n_2} \setminus \bigcup\limits_{\mathcal{Q} \in \mathcal{S}_{k}} \mathcal{Q} \right| = 0$ \, for all $k\in \mathbb{Z}$.

\item For $k \geq l$, if $\mathcal{P} \in \mathcal{S}_k$ and $\mathcal{Q} \in \mathcal{S}_l$, then either $\mathcal{P} \cap \mathcal{Q}=\emptyset$ or $\mathcal{P} \subset \mathcal{Q}$. 

\item For any $\mathcal{Q}_1 \in \mathcal{S}_{k}$ there exists at least one $\mathcal{Q}_2 \in  \mathcal{S}_{k+1}$ such that $\mathcal{Q}_2 \subset \mathcal{Q}_1$ (called a child of $\mathcal{Q}_1$)
and there exists exactly one $\mathcal{Q}_3 \in \mathcal{S}_{k-1}$ such that $\mathcal{Q}_1 \subset \mathcal{Q}_3$ (called the parent of $\mathcal{Q}_1$).

\item If $\mathcal{Q}_2$ is a child of $\mathcal{Q}_1$ then $|\mathcal{Q}_2| \geq \varepsilon_0 |\mathcal{Q}_1|$. 

\item For every $\mathcal{Q} \in \mathcal{S}_k$ we have 
$$ B(c_{\mathcal{Q}}, h_1 \delta_0^k) \subset \mathcal{Q} \subset B(c_{\mathcal{Q}}, h_2 \delta_0^k), $$
and for any $s>0$, we denote $B(c_{\mathcal{Q}}, s h_2 \delta_0^k)$ by $s B(\mathcal{Q})$. 
\end{enumerate}
\end{theorem}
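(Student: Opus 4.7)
The plan is to reproduce Christ's construction from \cite{Christ-lectures-singular-1990}, which builds a dyadic-like system on any space of homogeneous type; since $(\mathbb{R}^{n_1+n_2}, d, |\cdot|)$ is doubling (see \eqref{ineq:doubling-condition-grushin-metric}), his argument applies once we fix $\delta_0 \in (0,1)$ small enough in terms of the quasi-metric constant $C_0$ and the doubling constant. The construction is purely metric-measure theoretic, so none of the specific structure of $G_\varkappa$ enters beyond the doubling property already recorded.

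First I would produce, for each $k \in \mathbb{Z}$, a maximal $\delta_0^k$-separated set of points $\{c_\alpha^k\}_\alpha$ via Zorn's lemma. Maximality gives that $\bigcup_\alpha B(c_\alpha^k, \delta_0^k) = \mathbb{R}^{n_1+n_2}$, while separation gives that the balls $B(c_\alpha^k, \delta_0^k / 2)$ are pairwise disjoint. These points will play the role of the centers $c_{\mathcal{Q}}$ in property (v). I would then impose a tree structure by assigning to each $c_\alpha^k$ a unique parent index at level $k-1$, namely some $\beta$ minimizing $d(c_\alpha^k, c_\beta^{k-1})$, with ties broken by a fixed well-ordering of each net.

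The cubes are then defined level by level so as to respect this tree. A first attempt is to use Voronoi-type cells at each scale, but such cells need not nest across levels. Christ's remedy is to define $\mathcal{Q}_\alpha^k$ as the union, over all very-deep-scale indices $\beta$ whose iterated parent equals $\alpha$, of their Voronoi cells at a fine level $N \gg k$, and then pass to a limit (or equivalently build cells top-down by refinement). Nesting (ii) and the parent/child structure (iii) are then automatic from the construction, while (i) is immediate from the covering property of the nets, and (iv) follows from the doubling condition applied to the sandwiching balls provided by (v).

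The technical heart of the argument, and the main obstacle, is establishing the two-sided ball bound (v) with constants $h_1, h_2$ independent of $k$. The lower inclusion $B(c_{\mathcal{Q}}, h_1 \delta_0^k) \subset \mathcal{Q}$ requires that points well inside the Voronoi cell of $c_\alpha^k$ are not stolen by the nesting refinement, while the upper inclusion $\mathcal{Q} \subset B(c_{\mathcal{Q}}, h_2 \delta_0^k)$ requires that every $x \in \mathcal{Q}_\alpha^k$ can be connected to $c_\alpha^k$ by a chain of parent-distances summing to at most $C_0 \sum_{j \geq k} \delta_0^j \lesssim \delta_0^k$. These two demands pull in opposite directions: nesting tends to distort cells, while geometric regularity wants them to stay close to Voronoi. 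The resolution, and the reason $\delta_0$ must be chosen small before the construction begins, is that smallness of $\delta_0$ compared to $C_0$ forces the above geometric series to close with a small implicit constant, so the iterated displacements cannot accumulate. Once $\delta_0$ is fixed this way, all five properties can be verified without further obstruction.
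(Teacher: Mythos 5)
The paper does not prove this theorem; it is cited directly to Christ's lectures \cite{Christ-lectures-singular-1990}, so there is no internal argument to compare against. Evaluating your sketch on its own merits as a reconstruction of Christ's construction: the spine is right (maximal $\delta_0^k$-separated nets at each scale, a tree/partial order built by choosing a unique parent at the coarser scale, cubes built to respect the tree, and then the two-sided ball bound (v) as the technical heart, with (iv) deduced from (v) plus doubling). But two points need correction.

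First, Christ does not use Voronoi cells at any stage. His cubes are, in essence, unions of small balls $B(c_\beta^{l}, a_0 \delta_0^{l})$ over all descendants $c_\beta^{l}$ of $c_\alpha^k$ in the tree (equivalently, the set of limit points of descending chains rooted at $c_\alpha^k$), followed by a bookkeeping step to make the level-$k$ cubes pairwise disjoint. Replacing this by Voronoi cells at a very fine level $N$ and passing to a limit is a genuinely different construction, and the two-sided bound (v) would require a different proof in that setting: the limiting object is not obviously open, and it is not clear that a fixed interior ball of radius $h_1 \delta_0^k$ survives the limit. If you want to follow Christ, stay with balls around descendants; the geometric-series estimate you mention (iterated parent displacements sum to $\lesssim \delta_0^k$ for small $\delta_0$) is indeed what gives the outer inclusion, and the inner inclusion comes from checking that any point within $a_0 \delta_0^k$ of $c_\alpha^k$, for $a_0$ small, necessarily has all of its ancestors inside the tree under $c_\alpha^k$.

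Second, property (i) is not ``immediate from the covering property of the nets.'' The maximality of the net guarantees that the \emph{closed} balls $B(c_\alpha^k, \delta_0^k)$ cover, hence the closures of the cubes cover; but the open cubes at a fixed level are disjoint by construction, and what must be shown is that the residual set (the ``boundary'' between cubes) has Lebesgue measure zero. Christ proves this via a separate thin-boundary estimate of the form $|\{x \in \mathcal{Q} : d(x, \mathcal{Q}^c) \le t \, \delta_0^k\}| \lesssim t^{\gamma} |\mathcal{Q}|$ for some $\gamma > 0$, which uses the doubling property in an essential way. Without some version of this lemma, (i) is a real gap.
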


Recall the definition of the sparse family $S \subset \mathcal{S}$ and the sparse operators $\mathcal{A}_{r, S}$ and $\mathcal{A}_{S}$ given in and around \eqref{def:Sparse-operator} in the introduction. 

We let $\mathcal{M}$ stand for the uncentered Hardy--Littlewood maximal function on $(\mathbb{R}^{n_1+n_2}, d, | \cdot |)$ and for $1<r<\infty$, $\mathcal{M}_{r}f:=(\mathcal{M}(|f|^r))^{1/r}$. Our proofs on sparse domination depend on a general sparse domination principle by Lerner--Ombrosi (Theorem 1.1 in \cite{Lerner-Ombrosi-pointwaise-sparse2020}) which is recently extended to the homogeneous type spaces by Lorist \cite{Lorist-pointwaise-sparse2021}. Corresponding to the dyadic decomposition $\mathcal{S}$ and the absolute constants $ 0 < h_1 \leq h_2 < \infty$, $0 < \varepsilon_0, \delta_0 < 1$ (as in Theorem \ref{thm:dyadic-cubes}), and for any linear operator $T$ acting on functions defined on $ \mathbb{R}^{n_1+n_2}$, let us consider the following version of the grand maximal truncated operator: For $s > 0$
\begin{align} \label{def:grand-maximal-truncated-operator}
\mathcal{M}^{\#}_{T, s} f(x) = \sup_{B: B \ni x} \esssup \limits_{y,z \in B} \left| T(f \chi_{\mathbb{R}^{n_1+n_2}\setminus s B})(y)-T(f \chi_{\mathbb{R}^{n_1+n_2}\setminus s B})(z) \right|, 
\end{align}
where the supremum is taken over all balls $B$ containing the point $x$.

With the notations as defined above and with $C_0$ as in \eqref{def:quasi-constant}, the following result was proved in \cite{Lorist-pointwaise-sparse2021}. For our convenience, we state it for the homogeneous type space $(\mathbb{R}^{n_1+n_2}, d, | \cdot |)$.

\begin{theorem}[Theorem 1.1, \cite{Lorist-pointwaise-sparse2021}]
\label{thm:Lorist-general-sparse-principle}
On the homogeneous type space $(\mathbb{R}^{n_1+n_2}, d, | \cdot |)$, let $T$ be a sub-linear operator of weak type $(p, p)$ and $\mathcal{M}^{\#}_{T, s}$ is weak type $(q, q)$ for some $1\leq  p, q<\infty$ and $s\geq \frac{3 C_{0}^{2}}{\delta_0}$. Let $r=\max\{p, q\}$. Then there is an $0 < \eta < 1$ such that for every compactly supported bounded measurable function $f$, there exist an $\eta$-sparse family $S \subset \mathcal{S}$ such that for almost every $x \in \mathbb{R}^{n_1 + n_2}$ we have 
\begin{align*}
|Tf(x)| \lesssim_{\eta, s} C_T \mathcal{A}_{r, S}f(x), 
\end{align*} 
where $C_T = \|T\|_{L^p \to L^{p, \infty}} + \| \mathcal{M}^{\#}_{T, s} \|_{L^q \to L^{q, \infty}}$. 
\end{theorem}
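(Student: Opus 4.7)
The plan is to run a recursive Calder\'on--Zygmund stopping-time construction driven simultaneously by the two weak-type hypotheses, following the Lerner--Ombrosi sparse-domination paradigm. Set $r = \max\{p,q\}$. The scale I attach to each dyadic cube $\mathcal{Q} \in \mathcal{S}$ is
\[
\lambda_{\mathcal{Q}} \,=\, K\, C_T \left( \frac{1}{|sB(\mathcal{Q})|} \int_{sB(\mathcal{Q})} |f|^r \right)^{1/r},
\]
for a large absolute constant $K$; by doubling \eqref{ineq:doubling-condition-grushin-metric} and Theorem \ref{thm:dyadic-cubes}(v), $|sB(\mathcal{Q})| \sim |\mathcal{Q}|$, so $\lambda_{\mathcal{Q}} \sim C_T \langle |f|^r\rangle_{\mathcal{Q}}^{1/r}$.

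\emph{Construction of $S$.} Since $f$ is bounded and compactly supported, I first select finitely many disjoint top cubes $\{\mathcal{Q}_0^i\} \subset \mathcal{S}$ whose dilates $sB(\mathcal{Q}_0^i)$ cover $\mathrm{supp}\,f$, and argue on each independently; fix one such $\mathcal{Q}$. Define the bad set
\[
E_{\mathcal{Q}} \,=\, \bigl\{x \in \mathcal{Q}: |T(f\chi_{sB(\mathcal{Q})})(x)| > C_1\lambda_{\mathcal{Q}}\bigr\} \,\cup\, \bigl\{x \in \mathcal{Q}: \mathcal{M}^{\#}_{T,s}(f\chi_{sB(\mathcal{Q})})(x) > C_2\lambda_{\mathcal{Q}}\bigr\}.
\]
Using the weak-type $(p,p)$ bound for $T$ and the $(q,q)$ bound for $\mathcal{M}^{\#}_{T,s}$, together with the H\"older estimate $\|f\chi_{sB(\mathcal{Q})}\|_p \leq |sB(\mathcal{Q})|^{1/p - 1/r} \|f\chi_{sB(\mathcal{Q})}\|_r$ (and analogously for $q$), one verifies that for $K, C_1, C_2$ large enough, $|E_{\mathcal{Q}}| \leq \tfrac{\varepsilon_0}{4}|\mathcal{Q}|$. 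A standard stopping-time argument then produces the maximal dyadic sub-cubes $\{\mathcal{P}_j\}_j$ of $\mathcal{Q}$ with $|\mathcal{P}_j \cap E_{\mathcal{Q}}| > (\varepsilon_0/2)|\mathcal{P}_j|$; these are appended to $S$ as children of $\mathcal{Q}$ and the construction is iterated. One has $E_{\mathcal{Q}} \subset \bigcup_j \mathcal{P}_j$ (mod null sets) and $\sum_j |\mathcal{P}_j| \leq (2/\varepsilon_0)|E_{\mathcal{Q}}| \leq \tfrac{1}{2}|\mathcal{Q}|$, so $E(\mathcal{Q}) := \mathcal{Q} \setminus \bigcup_j \mathcal{P}_j$ are pairwise disjoint across $\mathcal{Q} \in S$ and satisfy $|E(\mathcal{Q})| \geq \tfrac{1}{2}|\mathcal{Q}|$, establishing $\tfrac{1}{2}$-sparsity.

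\emph{Pointwise domination.} Fix $x$ with ancestral chain $\mathcal{Q}_0 \supset \mathcal{Q}_1 \supset \mathcal{Q}_2 \supset \cdots$ in $S$ containing $x$. Telescope:
\[
Tf(x) \,=\, T(f\chi_{sB(\mathcal{Q}_0)})(x) \,=\, \sum_{k \geq 0} \bigl[ T(f\chi_{sB(\mathcal{Q}_k)})(x) - T(f\chi_{sB(\mathcal{Q}_{k+1})})(x) \bigr] \,+\, \text{tail},
\]
where the tail vanishes a.e.\ because $f$ is essentially bounded and $|sB(\mathcal{Q}_k)| \to 0$ along an infinite chain. For each $k$, pick $y_{k+1} \in \mathcal{Q}_{k+1} \setminus E_{\mathcal{Q}_k}$, which is non-empty since $|\mathcal{Q}_{k+1}\setminus E_{\mathcal{Q}_k}| \geq (1 - \varepsilon_0/2)|\mathcal{Q}_{k+1}| > 0$. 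Insert and subtract $T(f\chi_{sB(\mathcal{Q}_k)})(y_{k+1})$ and $T(f\chi_{sB(\mathcal{Q}_{k+1})})(y_{k+1})$ into the $k$-th summand: the two "$x$ minus $y_{k+1}$" differences are each bounded by $\mathcal{M}^{\#}_{T,s}(f\chi_{sB(\mathcal{Q}_k)})(x) \leq C_2\lambda_{\mathcal{Q}_k}$ (and the analogue at level $k+1$) once we identify an admissible ball $B$ for the grand maximal truncation, while the remaining "value at $y_{k+1}$" difference is controlled by $C_1(\lambda_{\mathcal{Q}_k}+\lambda_{\mathcal{Q}_{k+1}})$ because $y_{k+1} \notin E_{\mathcal{Q}_k}$. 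Summation gives $|Tf(x)| \lesssim \sum_{\mathcal{Q} \in S,\, x \in \mathcal{Q}} \lambda_{\mathcal{Q}} \lesssim C_T\, \mathcal{A}_{r,S} f(x)$ a.e.

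\emph{Main obstacle.} The delicate geometric step is realising each level-$k$ difference through a single ball $B$ lying in the admissible family defining $\mathcal{M}^{\#}_{T,s}$: given $x \in \mathcal{Q}_{k+1}$ and the chosen $y_{k+1} \in \mathcal{Q}_{k+1}$, one needs a ball $B \ni x, y_{k+1}$ with $sB \supseteq sB(\mathcal{Q}_{k+1})$, so that $f\chi_{sB(\mathcal{Q}_k)}\chi_{\mathbb{R}^{n_1+n_2}\setminus sB}$ matches the telescoped difference $f\chi_{sB(\mathcal{Q}_k)\setminus sB(\mathcal{Q}_{k+1})}$. Using \eqref{def:quasi-constant}, the inner/outer radii $h_1, h_2$, and the contraction ratio $\delta_0$ of Theorem \ref{thm:dyadic-cubes}, this containment is exactly what forces the quantitative threshold $s \geq 3 C_0^2/\delta_0$ in the hypothesis. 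Once this quasi-metric lemma is secured, the remaining pointwise bound is a mechanical iteration and the sparsity is automatic from the $\tfrac{1}{2}$-threshold; no ingredient beyond doubling of $(\mathbb{R}^{n_1+n_2},d,|\cdot|)$ is required.
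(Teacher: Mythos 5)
The paper does not actually prove this statement --- it is imported verbatim from Lorist \cite{Lorist-pointwaise-sparse2021} --- so the comparison is with the known Lerner--Ombrosi/Lorist stopping-time argument, whose skeleton your outline indeed follows. As written, however, the core analytic step fails. The operator \eqref{def:grand-maximal-truncated-operator} only controls oscillations of $T$ applied to a function truncated \emph{off} a dilated ball: $\mathcal{M}^{\#}_{T,s}h(x)$ bounds $|T(h\chi_{\X\setminus sB})(y)-T(h\chi_{\X\setminus sB})(z)|$ for $y,z$ in a ball $B\ni x$. It gives no control of $|T(f\chi_{sB(\mathcal{Q}_k)})(x)-T(f\chi_{sB(\mathcal{Q}_k)})(y_{k+1})|$, since $f\chi_{sB(\mathcal{Q}_k)}$ is not of the form $h\chi_{\X\setminus sB}$ for any admissible $B\ni x$ (its support contains $\mathcal{Q}_{k+1}\ni x$). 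Your ``main obstacle'' paragraph does not repair this: if $sB\supsetneq sB(\mathcal{Q}_{k+1})$ then $f\chi_{sB(\mathcal{Q}_k)}\chi_{\X\setminus sB}\neq f\chi_{sB(\mathcal{Q}_k)\setminus sB(\mathcal{Q}_{k+1})}$. The correct step is to use sublinearity to split $|T(f\chi_{sB(\mathcal{Q}_k)})(x)|\le |T(f\chi_{sB(\mathcal{Q}_{k+1})})(x)|+|T(f\chi_{sB(\mathcal{Q}_k)}\chi_{\X\setminus sB(\mathcal{Q}_{k+1})})(x)|$ and apply the oscillation bound only to the far piece, the admissible ball being exactly $B(\mathcal{Q}_{k+1})$ from Theorem \ref{thm:dyadic-cubes}(v), which contains both $x$ and $y_{k+1}$. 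Note also that for a merely sublinear $T$ you cannot convert $T(f\chi_{sB(\mathcal{Q}_k)})(x)-T(f\chi_{sB(\mathcal{Q}_{k+1})})(x)$ into $T$ of the difference, so the scheme must be run as an iterated inequality rather than your telescoping identity; and after the split, the term $|T(f\chi_{sB(\mathcal{Q}_k)}\chi_{\X\setminus sB(\mathcal{Q}_{k+1})})(y_{k+1})|$ produces $|T(f\chi_{sB(\mathcal{Q}_{k+1})})(y_{k+1})|$, which your condition $y_{k+1}\notin E_{\mathcal{Q}_k}$ does not control by $\lambda_{\mathcal{Q}_{k+1}}$; you also need $y_{k+1}\notin E_{\mathcal{Q}_{k+1}}$ (or an extra weak-type exceptional set at level $k+1$).

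There are further genuine gaps. The availability of $y_{k+1}$ is justified by an inequality that contradicts your own stopping rule: selected cubes satisfy $|\mathcal{P}_j\cap E_{\mathcal{Q}}|>(\varepsilon_0/2)|\mathcal{P}_j|$, the reverse of your claim $|\mathcal{Q}_{k+1}\setminus E_{\mathcal{Q}_k}|\ge(1-\varepsilon_0/2)|\mathcal{Q}_{k+1}|$; the correct bound comes from maximality, since the dyadic parent $\widehat{\mathcal{P}}_j$ is not selected and Theorem \ref{thm:dyadic-cubes}(iv) gives $|\mathcal{P}_j\cap E_{\mathcal{Q}}|\le(\varepsilon_0/2)|\widehat{\mathcal{P}}_j|\le\tfrac12|\mathcal{P}_j|$, which together with $|E_{\mathcal{Q}_{k+1}}|\le(\varepsilon_0/4)|\mathcal{Q}_{k+1}|$ leaves a positive-measure set of good points $y_{k+1}\notin E_{\mathcal{Q}_k}\cup E_{\mathcal{Q}_{k+1}}$. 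The tail argument is also invalid: weak type $(p,p)$ plus boundedness of $f$ does not give $T(f\chi_{sB(\mathcal{Q}_k)})(x)\to 0$ pointwise along an infinite chain; the correct observation is that the $k$-th generation of $S$ inside a top cube has total measure at most $2^{-k}|\mathcal{Q}_0|$, so a.e.\ $x$ lies in only finitely many selected cubes, and for the terminal cube the last term is bounded because, by Lebesgue differentiation along the Christ cubes, a.e.\ point not covered by selected children lies outside that cube's bad set. Finally, taking finitely many top cubes whose dilates merely cover $\supp f$ is insufficient: the conclusion is for a.e.\ $x\in\X$ and $Tf$ need not vanish off $\supp f$; one needs (countably many) dyadic cubes covering a.e.\ point of the space, each satisfying $\supp f\subset sB(\mathcal{Q}_0^i)$ so that $Tf=T(f\chi_{sB(\mathcal{Q}_0^i)})$ on $\mathcal{Q}_0^i$ --- this global reduction is one of the places where the largeness of $s$ and the structure of the generations genuinely enter. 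So the strategy is the right one, but the application of $\mathcal{M}^{\#}_{T,s}$, the good-point selection, the tail, and the global reduction all have to be redone before this constitutes a proof.
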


The primary usefulness of the sparse domination lies in the fact that it is easy to get quantitative weighted estimates for sparse operators which can be immediately passed on to the operator at hand. Quantitative weighted bounds for the sparse operators are rigorously studied in Euclidean spaces and almost analogous results are also established in homogeneous type spaces as well. In particular, we state the following result from \cite{Lorist-pointwaise-sparse2021}. 
\begin{theorem}[Proposition 4.1, \cite{Lorist-pointwaise-sparse2021}]
\label{thm:Quantitative-bounds-sparse-operators}
Let $S$ be an $\eta$-sparse family and $r\in [1, \infty)$. Then for $p\in (r, \infty)$, $w\in A_{p/r}(\X)$ and $f\in L^p(\X, w)$ we have
$$\|\mathcal{A}_{r, S}f\|_{L^p(w)}\lesssim [w]_{A_{p/r}(\X)}^{\max\{\frac{1}{p-r}, 1\}}\|f\|_{L^p(w)},$$
with the implicit constant depending on $p, r$, and $\eta$. 
\end{theorem}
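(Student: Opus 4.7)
The plan is to establish the bound via the now-standard duality-plus-sparseness scheme for estimating sparse operators, transcribed to the homogeneous space $(\X, d, |\cdot|)$ equipped with the Christ-type dyadic system $\mathcal{S}$ of Theorem \ref{thm:dyadic-cubes}. All the auxiliary ingredients needed beyond the Euclidean setting (sharp reverse H\"older inequality, Buckley-type sharp maximal function bounds, $A_\infty$ theory) are available in homogeneous spaces thanks to \cite{Hytonen-Perez-Rela-reverse-Holder-JFA2012}, so the proof should be a direct adaptation of the Euclidean argument (as in the Cruz-Uribe--Martell--P\'erez and Hyt\"onen--P\'erez school).

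First, by duality in $L^p(w)$ and positivity of $\mathcal{A}_{r,S}f$,
$$\|\mathcal{A}_{r,S}f\|_{L^p(w)} = \sup_{g} \sum_{\mathcal{Q}\in S} \Big(\frac{1}{|\mathcal{Q}|}\int_{\mathcal{Q}}|f|^r\Big)^{1/r} \int_{\mathcal{Q}} g\,w,$$
where the supremum runs over nonnegative $g$ with $\|g\|_{L^{p'}(w)}=1$. Using the pairwise disjoint sets $E_{\mathcal{Q}}\subset \mathcal{Q}$ with $|E_{\mathcal{Q}}|\geq \eta|\mathcal{Q}|$, I would replace $|\mathcal{Q}|$ by $\eta^{-1}|E_{\mathcal{Q}}|$, recognize that each average is dominated by the corresponding uncentered maximal function $\mathcal{M}_r f$ (resp.\ $\mathcal{M}(gw)$) at any point of $E_{\mathcal{Q}}$, and use the disjointness of the $E_{\mathcal{Q}}$ to collapse the sum into an integral:
$$\sum_{\mathcal{Q}\in S}\Big(\tfrac{1}{|\mathcal{Q}|}\int_{\mathcal{Q}}|f|^r\Big)^{1/r} \int_{\mathcal{Q}} g\,w \;\leq\; \eta^{-1}\int_{\X}\mathcal{M}_r f(x)\,\mathcal{M}(gw)(x)\,dx.$$

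Next, I would apply H\"older's inequality (with respect to Lebesgue measure, with exponents $p$ and $p'$) and rewrite the second factor as $\|\mathcal{M}(gw)\|_{L^{p'}(w^{1-p'})}$; the identity $\|gw\|_{L^{p'}(w^{1-p'})}=\|g\|_{L^{p'}(w)}=1$ then reduces matters to weighted $L^p$ boundedness of $\mathcal{M}_r$ and weighted $L^{p'}$ boundedness of $\mathcal{M}$. Buckley's sharp bound $\|\mathcal{M}\|_{L^q(v)\to L^q(v)}\lesssim [v]_{A_q}^{1/(q-1)}$ (which holds in the homogeneous space by \cite{Hytonen-Perez-Rela-reverse-Holder-JFA2012}) gives $\|\mathcal{M}_r\|_{L^p(w)}\lesssim [w]_{A_{p/r}}^{1/(p-r)}$ and, using the duality identity $[w^{1-p'}]_{A_{p'}}=[w]_{A_p}^{1/(p-1)}$ together with $[w]_{A_p}\leq [w]_{A_{p/r}}$ (valid since $p\geq p/r$), a control of $\|\mathcal{M}\|_{L^{p'}(w^{1-p'})}$ by a constant multiple of $[w]_{A_{p/r}}$. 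This crude combination already yields a bound of the form $[w]_{A_{p/r}}^{1+1/(p-r)}$, establishing boundedness but with a non-sharp power.

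To reach the sharp exponent $\max\{1/(p-r),1\}$, I would refine the two maximal-function estimates via the Hyt\"onen--P\'erez $A_\infty$ sharpening: replace $[v]_{A_q}^{1/(q-1)}$ by $([v]_{A_q}[v^{1-q'}]_{A_\infty})^{1/q}$ in each weighted maximal bound, and then invoke the sharp reverse H\"older inequality of \cite{Hytonen-Perez-Rela-reverse-Holder-JFA2012} to control $[w]_{A_\infty}$ and $[w^{1-(p/r)'}]_{A_\infty}$ by universal multiples of $[w]_{A_{p/r}}$ and $[w]_{A_{p/r}}^{1/(p/r-1)}$ respectively. I expect the main obstacle to be bookkeeping the exponents across the threshold $p=2r$: when $p>2r$ the leading contribution comes from the dual weight and produces exponent $1$, while for $r<p<2r$ it comes from $w$ itself and yields $1/(p-r)$. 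Verifying that the two computations fit together at $p=2r$ (where both values are $1/r$ and the two branches of the maximum coincide) and that the sparseness parameter $\eta$ enters only through a multiplicative constant (as recorded in \cite{Lorist-pointwaise-sparse2021}) completes the proof.
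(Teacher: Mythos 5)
The paper does not actually prove this statement -- it is quoted verbatim from Lorist (Proposition 4.1 of \cite{Lorist-pointwaise-sparse2021}) -- so the only comparison available is against the known sharp argument that the citation stands for. The first half of your proposal (duality, replacing $|\mathcal{Q}|$ by $\eta^{-1}|E_{\mathcal{Q}}|$ using the \emph{pairwise disjoint} sets $E_{\mathcal{Q}}$ -- a hypothesis you correctly supply even though the paper's definition of sparseness forgets to state it -- domination by $\mathcal{M}_r f$ and $\mathcal{M}(gw)$, and H\"older with the dual weight $\sigma=w^{1-p'}$) is correct, and it yields the bound $\eta^{-1}\|\mathcal{M}_r\|_{L^p(w)}\|\mathcal{M}\|_{L^{p'}(\sigma)}\lesssim [w]_{A_{p/r}(\X)}^{1+\frac{1}{p-r}}$, exactly as you say.

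The genuine gap is the claimed sharpening. Plugging the Hyt\"onen--P\'erez mixed bounds into your scheme gives the product $\bigl([w]_{A_{p/r}}[w^{1-(p/r)'}]_{A_\infty}\bigr)^{1/p}\bigl([w^{1-p'}]_{A_{p'}}[w]_{A_\infty}\bigr)^{1/p'}$, and after the controls you propose ($[w]_{A_\infty}\lesssim[w]_{A_{p/r}}$, $[w^{1-(p/r)'}]_{A_\infty}\lesssim[w]_{A_{p/r}}^{r/(p-r)}$, $[w^{1-p'}]_{A_{p'}}=[w]_{A_p}^{1/(p-1)}\le[w]_{A_{p/r}}^{1/(p-1)}$) the exponent is $\frac1p\bigl(1+\frac{r}{p-r}\bigr)+\frac1{p'}\bigl(\frac1{p-1}+1\bigr)=\frac1{p-r}+1$: you land back exactly where the crude Buckley computation left you, not at $\max\{\frac1{p-r},1\}$. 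This is not a bookkeeping slip that can be fixed at the threshold $p=2r$; the obstruction is structural. H\"older decouples the $f$-side from the $g$-side, so your scheme can never do better than $\|\mathcal{M}_r\|_{L^p(w)}\cdot\|\mathcal{M}\|_{L^{p'}(\sigma)}$, and both factors are individually sharp (Buckley) and simultaneously large for standard power-weight examples; already for $r=1$, $p=2$ the scheme cannot beat $[w]_{A_2}^2$, while the truth is $[w]_{A_2}$. The sharp exponent requires a different mechanism, as in Lerner's simple proof of the $A_2$ theorem and its $\mathcal{A}_{r,S}$ variants (Moen, Cruz-Uribe--Martell--P\'erez), which Lorist transplants to homogeneous spaces: rewrite each average as $\langle |f|^r\rangle_{\mathcal{Q}}=\langle\sigma\rangle_{\mathcal{Q}}\langle |f|^r\sigma^{-1}\rangle^{\sigma}_{\mathcal{Q}}$ and $\langle gw\rangle_{\mathcal{Q}}=\langle w\rangle_{\mathcal{Q}}\langle g\rangle^{w}_{\mathcal{Q}}$ with $\sigma=w^{1-(p/r)'}$, use the $A_{p/r}$ condition \emph{once per cube} to trade the product $\langle w\rangle_{\mathcal{Q}}\langle\sigma\rangle_{\mathcal{Q}}^{p/r-1}$, and then invoke the weight-independent boundedness of the dyadic maximal operators taken with respect to the measures $w\,dx$ and $\sigma\,dx$ over the Christ cubes, together with the disjointness of the $E_{\mathcal{Q}}$; the two branches of the maximum come from which side absorbs the leftover averages. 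Without this (or a Sawyer-type testing argument), your proof establishes boundedness but not the stated quantitative dependence.
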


In the follow up, we shall confine ourselves only to sparse domination results for operators at hand. Since the associated quantitative weighted estimates follow from Theorem \ref{thm:Quantitative-bounds-sparse-operators}, for brevity, we refrain ourselves from stating them each and every time. 


\section{Kernel estimates and pointwise sparse domination} \label{sec:kernel-estimates-and-pointwise-sparse-domination}

We dedicate this section to obtain a framework for sparse domination for a class of operators $T \in \mathcal{B} \left( L^2({\mathbb{R}^{n_1+n_2}}) \right)$ satisfying certain properties that we shall shortly discuss. 

Let $T \in \mathcal{B} \left( L^2({\mathbb{R}^{n_1+n_2}}) \right)$ be such that it admits a decomposition $T=\sum_{j\geq 0}T_{j}$, with the convergence in the strong operator topology. We also assume that each operator $T_j$ has an integral kernel, denoted by $T_{j}(x, y).$ In this framework, we shall be working with conditions on the kernels of the following type, with the gradient vector $X$ as in \eqref{first-order-grad-vector}.

\medskip 
\noindent \textbf{\underline{$L^2$-conditions on the kernel}:} There exists some $R_{0} \in (0, \infty)$ such that for all $ \mathfrak{r} \in [0, R_{0}]$, $j \geq 0$, and for every positive real number $\mathcal{K}_0,$ we have 
\begin{align} 
\sup_{x\in \mathbb{R}^{n_1+n_2}} |B(x, 2^{-j/2})| \int_{\mathbb{R}^{n_1+n_2}} d(x,y)^{2\mathfrak{r}} |T_{j}(x,y)|^2 \, dy & \lesssim_{R_0} 2^{-j\mathfrak{r}}, \label{cond:General-hypo} \\ 
\sup_{x\in \mathbb{R}^{n_1+n_2}} |B(x, 2^{-j/2})| \int_{d(x,y)<\mathcal{K}_0}  d(x,y)^{2\mathfrak{r}} |X_{x} T_{j}(x,y)|^2 \, dy & \lesssim_{R_0, \mathcal{K}_0} 2^{-j \mathfrak{r}} 2^{j} \label{cond:General-hypo-grad}. 
\end{align}

\noindent \textbf{\underline{$L^{\infty}$-conditions on the kernel}:} There exists some $R_{0} \in (0, \infty)$ such that for all $\mathfrak{r} \in [0, R_{0}]$, $j \geq 0$, and for every positive real number $\mathcal{K}_0$, we have 
\begin{align} 
\sup_{x\in \mathbb{R}^{n_1+n_2}} \sup_{y \in \mathbb{R}^{n_1+n_2}} |B(x, 2^{-j/2})|^{1/2} |B(y, 2^{-j/2})|^{1/2} d(x,y)^{\mathfrak{r}} |T_{j}(x,y)| & \lesssim_{R_0} 2^{-j\mathfrak{r}/2}, \label{cond:General-hypo-sup} \\ 
\sup_{x \in \mathbb{R}^{n_1+n_2}} \sup_{y \in \mathbb{R}^{n_1+n_2}} |B(x, 2^{-j/2})|^{1/2} |B(y, 2^{-j/2})|^{1/2} d(x,y)^{\mathfrak{r}} |X_{y}T_{j}(x,y)| & \lesssim_{R_0} 2^{-j\mathfrak{r}/2} 2^{j/2}. \label{cond:General-hypo-y-grad-sup} \\ 
\sup_{x \in \mathbb{R}^{n_1+n_2}} \sup_{d(x,y)<\mathcal{K}_0} |B(x, 2^{-j/2})|^{1/2} |B(y, 2^{-j/2})|^{1/2} d(x,y)^{\mathfrak{r}} |X_{x}T_{j}(x,y)| & \lesssim_{R_0, \mathcal{K}_0} 2^{-j\mathfrak{r}/2} 2^{j/2}, \label{cond:General-hypo-grad-sup} 
\end{align}

\begin{remark} \label{rem:explaining-conditions-general-operator}
Conditions \eqref{cond:General-hypo}, \eqref{cond:General-hypo-sup} and \eqref{cond:General-hypo-y-grad-sup} are motivated by weighted Plancherel estimates obtained for spectral multipliers in \cite{DuongOuhabazSikoraWeightedPlancherel2002JFA, AnhBuiDuongSpectralMultipliersBesovTriebelLizorkin}. In conditions \eqref{cond:General-hypo-grad} and \eqref{cond:General-hypo-grad-sup}, we restrict the integral (or supremum) only on compact sets. In fact, this helps us to achieve the sparse domination with minimal requirement of derivatives on the symbol function in Theorems \ref{thm:pseudo-grushin-a=0-less-derivative} and \ref{thm:pseudo-grushin-a=0-full-derivative}. A similar idea was already employed by Bagchi--Thangavelu in their study of Hermite pseudo-multipliers (see Proposition $4.3$ of \cite{BagchiThangaveluHermitePseudo}). 
\end{remark}

Following are our main pointwise sparse domination results for such class of operators.
\begin{theorem} \label{thm:main-sparse}
Let $T \in \mathcal{B} \left( L^2({\mathbb{R}^{n_1+n_2}}) \right)$ be such that it admits a decomposition $T=\sum_{j\geq 0}T_{j}$, with the convergence in the strong operator topology, and that the integral kernels $T_j(x,y)$ satisfy conditions \eqref{cond:General-hypo} and \eqref{cond:General-hypo-grad} for some $R_0 > Q/2$. Then for every compactly supported bounded measurable function $f$ there exists a sparse family $S\subset \mathcal{S}$ such that
\begin{align} \label{sparseHormander1-general} 
|Tf(x)| \lesssim_{T} \mathcal{A}_{2, S} f(x),
\end{align} 
for almost every $x \in \mathbb{R}^{n_1 + n_2}$. 
\end{theorem}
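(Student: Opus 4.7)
The plan is to invoke the general sparse domination principle of Lerner--Ombrosi--Lorist (Theorem \ref{thm:Lorist-general-sparse-principle}). Since $T$ is bounded on $L^2(\mathbb{R}^{n_1+n_2})$ by hypothesis, it is of weak type $(2,2)$, so we take $p=2$. To obtain a sparse bound with the exponent $r = \max\{p,q\} = 2$ appearing in $\mathcal{A}_{2,S}$, it suffices to verify that the grand maximal truncated operator $\mathcal{M}^{\#}_{T,s}$ from \eqref{def:grand-maximal-truncated-operator} is of weak type $(2,2)$ for some admissible $s \geq 3 C_0^2/\delta_0$. I will actually prove the stronger pointwise bound
\begin{align*}
\mathcal{M}^{\#}_{T,s} f(x) \lesssim_T \mathcal{M}_2 f(x) := \bigl(\mathcal{M}(|f|^2)(x)\bigr)^{1/2},
\end{align*}
from which weak-type $(2,2)$ follows from the weak-type $(1,1)$ boundedness of the Hardy--Littlewood maximal function $\mathcal{M}$ on the doubling space $(\mathbb{R}^{n_1+n_2}, d, |\cdot|)$.

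Fix a ball $B = B(x_0, r)$ with $x \in B$ and points $y, z \in B$. Writing $g := f \chi_{\mathbb{R}^{n_1+n_2} \setminus sB}$ and using the decomposition $T = \sum_{j \geq 0} T_j$ from \eqref{def:operator-countable-break}, the idea is to split the sum at the threshold $j_0$ with $2^{-j_0/2} \sim r$ and to decompose the far region into dyadic annuli $A_k := \{w : 2^k sr \leq d(x_0,w) < 2^{k+1} sr\}$, $k \geq 0$. For the high frequencies $j > j_0$ (kernel scale $2^{-j/2} < r$), the crude triangle inequality $|T_j g(y) - T_j g(z)| \leq |T_j g(y)| + |T_j g(z)|$ combined with Cauchy--Schwarz on each annulus and the weighted $L^2$ kernel bound \eqref{cond:General-hypo} with exponent $\mathfrak{r} \in (Q/2, R_0]$ produces an annular estimate of the form
\begin{align*}
(2^k s r)^{-\mathfrak{r}} \, |B(y, 2^{-j/2})|^{-1/2} \, 2^{-j\mathfrak{r}/2} \cdot |A_k|^{1/2} \, \mathcal{M}_2 f(x).
\end{align*}
The doubling condition \eqref{ineq:doubling-condition-grushin-metric} turns this into a geometric series in $k$ that sums (using $\mathfrak{r} > Q/2$), followed by a geometric series in $j > j_0$ that sums (using $r \cdot 2^{j/2} > 1$), yielding a bound by a constant multiple of $\mathcal{M}_2 f(x)$.

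For the low frequencies $j \leq j_0$, I apply the mean-value inequality (Lemma \ref{lem:general-grushin-Mean-value}) in the first argument of the kernel along a geodesic $\gamma : [0,1] \to B(x_0, C_{1, \varkappa} r)$ joining $y$ to $z$, obtaining
\begin{align*}
|T_j(y,w) - T_j(z,w)| \leq C_{2,\varkappa} \, d(y,z) \int_0^1 |X_x T_j(\gamma(t), w)| \, dt.
\end{align*}
Integrating in $w$, the same annular decomposition and Cauchy--Schwarz --- this time paired with the weighted gradient estimate \eqref{cond:General-hypo-grad}, applied with $\mathcal{K}_0$ chosen slightly larger than the outer radius $2^{k+1} sr$ of the annulus under consideration --- yield the same bound as in the high-frequency case multiplied by the extra factor $d(y,z) \cdot 2^{j/2} \lesssim r \cdot 2^{j/2}$. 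The $k$-sum converges exactly as before, and the resulting $j$-series converges geometrically for $j \leq j_0$ because then $r \cdot 2^{j/2} \leq 1$.

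The main technical obstacle is the simultaneous convergence of the double sum in $(j, k)$, which relies on a careful bookkeeping of volume ratios $|B(\cdot, 2^{-j/2})|$ versus $|B(\cdot, 2^k sr)|$ via \eqref{ineq:doubling-condition-grushin-metric}. The hypothesis $R_0 > Q/2$ enters decisively here: it permits a choice of $\mathfrak{r}$ strictly above $Q/2$, which is precisely what makes each $k$-series summable. A secondary concern is the dependence of the implicit constant in \eqref{cond:General-hypo-grad} on $\mathcal{K}_0$; since one must take $\mathcal{K}_0 \sim 2^k sr$ growing with $k$, I expect (and will verify via the concrete Plancherel bounds of Section \ref{sec:direct-pseudo-grushin}) this dependence to be at worst polynomial and hence absorbable by enlarging $\mathfrak{r}$ slightly beyond $Q/2$. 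Once the pointwise bound $\mathcal{M}^{\#}_{T,s} f \lesssim_T \mathcal{M}_2 f$ is in hand, Theorem \ref{thm:Lorist-general-sparse-principle} delivers the sparse family $S \subset \mathcal{S}$ with $|Tf(x)| \lesssim_T \mathcal{A}_{2, S} f(x)$ almost everywhere.
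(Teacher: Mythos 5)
Your overall architecture is correct and matches the paper's: you invoke Theorem \ref{thm:Lorist-general-sparse-principle} with $p=q=2$, prove the pointwise domination $\mathcal{M}^{\#}_{T,s} f \lesssim_T \mathcal{M}_2 f$, split the frequency sum at $2^{-j_0/2}\sim r$, decompose the far field into dyadic-type annuli, and use Cauchy--Schwarz against the weighted kernel bound \eqref{cond:General-hypo} for $j>j_0$ and the mean-value inequality plus the gradient bound \eqref{cond:General-hypo-grad} for $j\leq j_0$. However, there is one genuine gap that the paper's proof is specifically engineered to avoid, and which you flag but do not resolve.

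The gap concerns the dependence of condition \eqref{cond:General-hypo-grad} on $\mathcal{K}_0$. That hypothesis furnishes the gradient estimate only on $\{d(x,y)<\mathcal{K}_0\}$, with an implicit constant $\lesssim_{R_0,\mathcal{K}_0}$ that is \emph{completely unquantified} in $\mathcal{K}_0$. Your plan to apply the mean-value argument on every annulus $A_k$ requires taking $\mathcal{K}_0 \sim 2^k sr \to \infty$, so the resulting constants could in principle grow arbitrarily fast in $k$ and destroy the convergence of the $k$-series. You propose to rescue this by checking polynomial growth ``via the concrete Plancherel bounds of Section \ref{sec:direct-pseudo-grushin},'' but Theorem \ref{thm:main-sparse} is an abstract statement about operators whose kernels merely satisfy \eqref{cond:General-hypo}--\eqref{cond:General-hypo-grad}; one cannot appeal to the specific Grushin pseudo-multiplier instantiation to prove it. The paper circumvents this entirely by splitting $T_j = T^1_j + T^2_j$ via a fixed smooth cutoff $\phi$ supported in $[-5C_0^2,5C_0^2]$ in the distance variable, so that the mean-value/gradient argument is only ever invoked inside a \emph{fixed} bounded set (hence a single, fixed $\mathcal{K}_0$), while the far piece $T^2_j$ is estimated directly from \eqref{cond:General-hypo} without any gradient. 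In the near piece they also separately treat the scales $r_0\geq 1$ (no gradient needed) versus $r_0<1$ (gradient needed, but distances are bounded by a fixed constant), and in the $r_0<1$ mean-value estimate there is an additional term from differentiating $\phi$ that needs the uniform boundedness of $|Xd(\cdot,v)|$ away from the diagonal.

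A secondary, smaller omission: inside each annulus the paper splits further according to whether $|y'|\leq d(\mathfrak{z},v)$ or $|y'|> d(\mathfrak{z},v)$ (the two regimes of the ball-volume asymptotics \eqref{grushin-ball-growth}), which is what makes the exponents $n_1+n_2$ versus $Q$ appear correctly in the geometric series. Your sketch treats the volume bookkeeping as a single uniform argument and does not surface this case distinction; it is needed to get the claimed decay rates. These two issues --- the unbounded-$\mathcal{K}_0$ problem and the two-regime volume split --- are exactly what the $\phi$-truncation and the $I^1_{j,l}/I^2_{j,l}$ decomposition in the paper are there to handle, and your proposal as written does not replace them.
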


The following theorem reflects the well-known fact that if we assume appropriate pointwise estimates on the kernels $T_j(x, y)$ then we obtain sharper estimates for the operator $T$.
\begin{theorem} \label{thm:main-sparse-more-derivative}
Let $T \in \mathcal{B} \left( L^2({\mathbb{R}^{n_1+n_2}}) \right)$ be such that it admits a decomposition $T=\sum_{j\geq 0}T_{j}$, with the convergence in the strong operator topology, and that the integral kernels $T_j(x,y)$ satisfy conditions \eqref{cond:General-hypo-sup} and \eqref{cond:General-hypo-y-grad-sup} for some $R_0 \geq Q + \frac{1}{2}$, and condition \eqref{cond:General-hypo-grad-sup} for some $R_0 > Q$. Then for every compactly supported bounded measurable function $f$ there exists a sparse family $S\subset \mathcal{S}$ such that
\begin{align}
\label{sparseHormander2-general}
|Tf(x)| \lesssim_{T} \mathcal{A}_{S} f(x),
\end{align}
for almost every $x \in \mathbb{R}^{n_1+n_2}$. 
\end{theorem}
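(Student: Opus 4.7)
The plan is to apply the general sparse domination principle (Theorem \ref{thm:Lorist-general-sparse-principle}) with $p = q = 1$, whereupon $r = \max\{p, q\} = 1$ delivers exactly \eqref{sparseHormander2-general}. This reduces matters to verifying that both $T$ and the grand maximal truncated operator $\mathcal{M}^{\#}_{T, s}$ (for some $s \geq 3 C_0^2 / \delta_0$) are of weak type $(1,1)$ on the homogeneous space $(\mathbb{R}^{n_1+n_2}, d, |\cdot|)$. The $L^{\infty}$-nature of the present hypotheses \eqref{cond:General-hypo-sup}--\eqref{cond:General-hypo-grad-sup}, in contrast with the $L^2$-nature of \eqref{cond:General-hypo}--\eqref{cond:General-hypo-grad}, is precisely what permits the strengthening from $r = 2$ in Theorem \ref{thm:main-sparse} to $r = 1$ here.

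For the weak type $(1,1)$ of $T$, I first extract pointwise Calder\'on--Zygmund-type estimates on the total kernel $T(x, y) = \sum_{j} T_j(x, y)$. For fixed $x, y$ I split the sum over $j$ at the natural scale $2^{-j/2} \sim d(x, y)$: in the range $2^{-j/2} \leq d(x, y)$ I use the maximal exponent $\mathfrak{r} \leq R_0$ available in \eqref{cond:General-hypo-sup} together with the doubling bound $|B(x, 2^{-j/2})|^{-1/2} \lesssim (2^{j/2} d(x, y))^{Q/2} |B(x, d(x, y))|^{-1/2}$ from \eqref{ineq:doubling-condition-grushin-metric}, producing a geometric factor whose sum in $j$ converges; in the complementary range $2^{-j/2} > d(x, y)$ I use a small positive $\mathfrak{r}$ together with the trivial comparison $|B(x, 2^{-j/2})| \geq |B(x, d(x, y))|$. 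This yields the size bound $|T(x, y)| \lesssim 1 / |B(x, d(x, y))|$. Combining the same programme with \eqref{cond:General-hypo-y-grad-sup} and a mean-value argument in the second variable (along a geodesic joining $y$ to $y_0$) then verifies the integrated H\"ormander smoothness condition $\int_{d(x, y) \geq 2 C_0 d(y, y_0)} |T(x, y) - T(x, y_0)| \, dx \lesssim 1$, uniformly in $y, y_0$. Together with the assumed $L^2$-boundedness of $T$, these estimates identify $T$ as a generalised Calder\'on--Zygmund operator on $(\mathbb{R}^{n_1+n_2}, d, |\cdot|)$, and the classical Calder\'on--Zygmund decomposition in the sense of Coifman--Weiss \cite{Coifman-Weiss-book-1971} delivers the weak $(1, 1)$ bound.

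For the weak type $(1, 1)$ of $\mathcal{M}^{\#}_{T, s}$, I establish the stronger pointwise domination $\mathcal{M}^{\#}_{T, s} f(x) \lesssim \mathcal{M} f(x)$, from which the weak $(1, 1)$ bound follows at once from that of the Hardy--Littlewood maximal function. Given a ball $B = B(c_B, r_B)$ containing $x$ and points $y, z \in B$, the mean-value inequality of Lemma \ref{lem:general-grushin-Mean-value} furnishes a length-minimising curve $\gamma_0 : [0, 1] \to B(c_B, C_{1, \varkappa} r_B)$ joining $y$ to $z$ along which
\begin{align*}
\left| T(f \chi_{(sB)^c})(y) - T(f \chi_{(sB)^c})(z) \right| \lesssim d(y, z) \int_{(sB)^c} \int_{0}^{1} \bigl| X_x T(x, u) \bigr|_{x = \gamma_0(t)} \, dt \, |f(u)| \, du.
\end{align*}
Summing the bound \eqref{cond:General-hypo-grad-sup} over $j$ analogously to the previous step yields the gradient estimate $|X_x T(x, u)| \lesssim 1 / (d(x, u) \, |B(x, d(x, u))|)$ in the range $d(x, u) \leq \mathcal{K}_0$; a dyadic annular decomposition of $(sB)^c$ around $c_B$, together with $d(y, z) \leq 2 r_B$ and doubling, then collapses the contribution into $\sum_{k \geq \log_2 s} 2^{-k} \mathcal{M} f(x) \lesssim \mathcal{M} f(x)$.

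The principal obstacle is the delicate balancing of the parameter $\mathfrak{r}$ when summing the multi-scale $L^{\infty}$ bounds into the requisite Calder\'on--Zygmund estimates---this is precisely where the sharp regularity $R_0 \geq Q + \tfrac12$ in \eqref{cond:General-hypo-sup}--\eqref{cond:General-hypo-y-grad-sup}, as against the cruder $R_0 > Q$ required only for the first-variable gradient, enters. A second subtlety is the careful handling of the range restriction $d(x, y) < \mathcal{K}_0$ in \eqref{cond:General-hypo-grad-sup}: the ``ultra-far'' tail where $d(x, u) > \mathcal{K}_0$ must instead be absorbed using the pure size decay of $T(x, u)$ coming from \eqref{cond:General-hypo-sup} at its maximal exponent, which provides the summable decay needed to close the geometric series above.
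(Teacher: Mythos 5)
Your high-level plan is exactly the paper's: invoke Theorem \ref{thm:Lorist-general-sparse-principle} with $p=q=1$, prove that $T$ is of weak type $(1,1)$, and dominate $\mathcal{M}^{\#}_{T,s}$ pointwise by $\mathcal{M}$. Your weak-$(1,1)$ half is essentially the paper's Theorem \ref{thm:weak-type-bound-operator}: a H\"ormander-type regularity estimate in the second variable obtained by balancing, scale by scale in $j$, the size condition \eqref{cond:General-hypo-sup} against the $y$-gradient condition \eqref{cond:General-hypo-y-grad-sup} via the mean-value Lemma \ref{lem:general-grushin-Mean-value}, combined with $L^2$-boundedness and the Coifman--Weiss Calder\'on--Zygmund decomposition (the paper is more careful in that it proves the bound uniformly for the partial sums $\mathcal{T}_N=\sum_{j\le N}T_j$ and passes to the limit in measure). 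Two minor remarks there: the size bound $|T(x,y)|\lesssim |B(x,d(x,y))|^{-1}$ is not needed for weak $(1,1)$, and your recipe for it in the regime $2^{-j/2}>d(x,y)$ (a small $\mathfrak{r}>0$ together with $|B(x,2^{-j/2})|\ge|B(x,d(x,y))|$) does not produce a uniformly summable series as stated; one would need the volume lower bound coming from \eqref{grushin-ball-growth}, or simply drop this step.

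The genuine gap is in the treatment of $\mathcal{M}^{\#}_{T,s}$. You propose to sum \eqref{cond:General-hypo-grad-sup} over $j$ and use the resulting bound $|X_xT(x,u)|\lesssim d(x,u)^{-1}|B(x,d(x,u))|^{-1}$ for $d(x,u)\le \mathcal{K}_0$. This estimate is not available under the hypothesis $R_0>Q$: writing $d:=d(x,u)$, for $2^{-j/2}\le d$ condition \eqref{cond:General-hypo-grad-sup} with any $\mathfrak{r}\le R_0$ together with doubling \eqref{ineq:doubling-condition-grushin-metric} gives only
\begin{align*}
|X_xT_j(x,u)| \;\lesssim\; 2^{j/2}\,\bigl(2^{j/2}d\bigr)^{Q-\mathfrak{r}}\,|B(x,d)|^{-1} \;=\; d^{-1}\,\bigl(2^{j/2}d\bigr)^{Q+1-\mathfrak{r}}\,|B(x,d)|^{-1},
\end{align*}
and the sum over the scales $2^{j/2}\ge d^{-1}$ converges only when $\mathfrak{r}>Q+1$; the extra factor $2^{j/2}$ in the gradient estimates costs one full degree of regularity, so with $R_0>Q$ (even $R_0=Q+\tfrac12$) the series diverges. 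The same issue makes differentiating $T(f\chi_{(sB)^c})$ under the integral sign unjustified. The paper's proof (Theorem \ref{thm:domination-maximal-Mp}, which reuses the scheme of Theorem \ref{thm:domination-maximal-M2}) never assembles a gradient bound for the full kernel: it keeps the decomposition in $j$, applies the mean-value inequality only to the individual pieces $T_j^1$ (after the cutoff $\phi$, which is also what makes the restriction $d(x,y)<\mathcal{K}_0$ in \eqref{cond:General-hypo-grad-sup} harmless) and only for the scales $j\le j_0$ with $2^{j_0/2}r_0\sim 1$, where the gradient bound produces a summable factor $(2^{j/2}r_0)^{1-\epsilon}$, while for $j>j_0$ the difference $|T_j(y,v)-T_j(z,v)|$ is estimated by the size condition alone, which decays like $(2^{j/2}r_0)^{-\epsilon}$. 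The convergence thus comes from the interplay of $2^{-j/2}$ with the radius $r_0$ of the ball defining the truncation, not with $d(x,u)$; your splitting only in $d(x,u)$, followed by an annular decomposition, cannot close under the stated assumption on \eqref{cond:General-hypo-grad-sup}. Your handling of the far tail $d(x,u)>\mathcal{K}_0$ by the summed size decay at exponent $\mathfrak{r}>Q$ is fine, but the near regime needs the per-scale size/gradient balancing just described.
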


We develop proofs of Theorems \ref{thm:main-sparse} and \ref{thm:main-sparse-more-derivative} over the next two subsections. 


\subsection{Proof of Theorem \ref{thm:main-sparse}} \label{subsec:proof-thm:main-sparse}
In view of Theorem \ref{thm:Lorist-general-sparse-principle}, Theorem \ref{thm:main-sparse} follows from the following result. 

\begin{theorem} \label{thm:domination-maximal-M2}
Let $T \in \mathcal{B} \left( L^2({\mathbb{R}^{n_1+n_2}}) \right)$ be such that it admits a decomposition $T=\sum_{j\geq 0}T_{j}$, with the convergence in the strong operator topology, and that the integral kernels $T_j(x,y)$ satisfy conditions \eqref{cond:General-hypo} and \eqref{cond:General-hypo-grad} for some $R_0 > Q/2$. Then, for $s = 3 C_{1, \varkappa} C^2_{0} \delta_0^{-1}$ with $\delta_0$ as in Theorem \ref{thm:dyadic-cubes} and $C_{1, \varkappa}$ as in Lemma \ref{lem:general-grushin-Mean-value}, we have the following pointwise almost everywhere estimate for every bounded measurable function $f$ with compact support: 
\begin{align*} 
\mathcal{M}^{\#}_{T, s} f(x) \lesssim_{T, \varkappa, s} \mathcal{M}_{2}f(x). 
\end{align*} 
\end{theorem}
\begin{proof}
Choose and fix a small $\epsilon_1 >0$ such that $Q(1 + \epsilon_1) < 2 R_0$. Now, fix $x \in \mathbb{R}^{n_1+n_2}$ and a ball $B=B(z_0, r)$ containing $x$. Denote $r_{0}=sr$. Let $k_0$ be the smallest natural number such that $|B(z_0,2^{k_0} r_0)|>2|B(z_0, r_0)|$, and write $r_1 = 2^{k_0} r_{0}$. Next, let $k_1$ be the smallest natural number such that $|B( z_0,2^{k_1} r_1)| > 2|B( z_0, r_1)|$, and write $r_2 = 2^{k_1} r_{1}$. Continuing this process, for each $l \in \mathbb{N}$, let $k_{l}$ be the smallest natural number corresponding to $r_{l}$, that is, $r_{l+1} = 2^{k_{l}} r_{l}$ with $|B( z_0, r_{l+1})| > 2|B( z_0, r_{l})|$ and $|B( z_0, r_{l+1}/2)| = |B( z_0, 2^{k_{l}-1}r_{l})| \leq 2|B( z_0, r_{l})|$. As an immediate consequence of this construction, we get the following two estimates: 
\begin{align} 
& |B(x, r_{l})| \sim |B( z_0, r_{l})| \gtrsim |B( z_0, r_{l+1}/2)| \gtrsim |B( z_0, r_{l+1})| \sim |B(x, r_{l+1})|, \label{relation:annulus-part-ball-vol-comp-sparse-proof} \\ 
& |B(x, r_{l})| \gtrsim 2^l |B(x, r_0)| \gtrsim 2^{l} r_0^{n_1 + n_2} \max \left\{ r_0^{\varkappa n_2}, |x'|^{\varkappa n_2} \right\}. \label{relation:l-stage-ball-vol-comp-sparse-proof}
\end{align}
Also, denoting the annulus $B( z_0, r_{l+1})\setminus B( z_0, r_{l})$ by $\mathscr{A}_{l}$, it follows easily from the choice of $s = 3 C_{1, \varkappa} C^2_{0} \delta_0^{-1}$ that 
\begin{align} \label{relation:2-annulus-part-ball-vol-comp-sparse-proof}
d( z_0, v) \sim d(w, v),
\end{align}
for any $w \in B( z_0, C_{1, \varkappa} \, r)$ and $v \in \mathscr{A}_{l}$, with the implicit constants uniform in $l$ and $r$. 

We perform our analysis by decomposing operators $T_j$ as follows. Let $\phi\in C^{\infty}_{c}(\mathbb{R})$ be such that it is supported in $[-5C^2_0,5C^2_0]$ and $\phi \equiv 1$ on $[-3C^2_0, 3C^2_0]$. Let $T^1_j$ and $T^2_j$ be operators with kernels given by $T^1_j(u,v)= T_j(u, v) \phi(d(u, v)) $ and $T^2_j(u, v)= T_j(u, v) (1-\phi(d(u, v)) ) $ respectively. Benefit of introducing such a decomposition is explained later in Remark \ref{rem:introduction-phi-function-minimal-smoothness}.

For $y, z\in B( z_0, r)$, we write
\begin{align} \label{eq:M2-sparse-thm-estimate0} 
& |T(f\chi_{\X\setminus B( z_0, r_0}))(y)-T(f\chi_{\X\setminus B( z_0, r_0}))(z)| \\ 
\nonumber & \leq \sum_{j\geq 0}|T_{j}(f\chi_{\mathbb{R}^{n_1+n_2}\setminus B( z_0, r_{0})})(y)-T_{j}(f\chi_{\mathbb{R}^{n_1+n_2}\setminus B( z_0, r_{0})})(z)|\\
\nonumber & \leq \sum_{j\geq 0} |T^{1}_{j}(f\chi_{\mathbb{R}^{n_1+n_2}\setminus B( z_0, r_{0})})(y)-T^{1}_{j}(f\chi_{\mathbb{R}^{n_1+n_2}\setminus B( z_0, r_{0})})(z)|\\
\nonumber & \quad + \sum_{j\geq 0}|T^{2}_{j}(f\chi_{\mathbb{R}^{n_1+n_2}\setminus B( z_0, r_{0})})(y)-T^{2}_{j}(f\chi_{\mathbb{R}^{n_1+n_2}\setminus B( z_0, r_{0})})(z)| \\ 
\nonumber & =: \mathfrak{I}_{1} + \mathfrak{I}_{2}. 
\end{align} 

\medskip \noindent \textbf{\underline{Estimation of $\mathfrak{I}_{1}$ in \eqref{eq:M2-sparse-thm-estimate0}}:} 
Note that 
\begin{align} \label{eq:M2-sparse-thm-estimate1} 
& \mathfrak{I}_{1} \quad \leq \sum_{j \geq 0} \int_{\mathbb{R}^{n_1+n_2} \setminus B( z_0, r_0)} |T^{1}_{j}(y,v)-T^{1}_{j}(z,v)| \, |f(v)| \, dv =: \sum_{j\geq 0} I_{j},
\end{align}
and each $I_j$ could be further decomposed over annuli $\mathscr{A}_l$ as 
\begin{align} \label{eq:M2-sparse-thm-estimate1-annulusdecomposition-Case1} 
I_{j} \leq \sum_{l=0}^{\infty} \int_{\mathscr{A}_{l}}|T^{1}_{j}(y,v)-T^{1}_{j}(z,v)| \, |f(v)| \, dv. 
\end{align} 
We consider each term in the infinite sum in \eqref{eq:M2-sparse-thm-estimate1-annulusdecomposition-Case1}. Each such term is again dominated by $$\int_{\mathscr A_{l}}|T^{1}_{j}(y,v)| \, |f(v)|\,dv+\int_{\mathscr A_{l}}|T^{1}_{j}(z,v)| \, |f(v)| \, dv.$$
Since both the terms above are similar we just estimate one of them.

Depending on the nature of the metric $d,$ we make a further decomposition of the annulus $\mathscr{A}_{l}$ into the regions $\{v\in \mathscr A_{l}: |y'| \leq d( z_0,v)\}$ and $\{v\in \mathscr A_{l}: |y'| > d( z_0,v)\}$ yielding the following estimates:
\begin{align} \label{mainthm1}
&\int_{\mathscr A_{l} \cap \{|y'| \leq d( z_0, v)\}}|T^{1}_{j}(y,v)| \, |f(v)|\, dv\\
 \nonumber& \leq \left( \int_{\mathscr A_{l} \cap \{|y'| \leq d( z_0, v)\}} |B(y, d( z_0, v))|^{-(1 +\epsilon_1)} |f(v)|^2 \, dv \right)^{1/2} \\
\nonumber & \quad \times \left(\int_{\mathscr A_{l} \cap \{|y'| \leq d( z_0, v)\}} |B(y, d( z_0,v))|^{(1+\epsilon_1)} |T^{1}_{j}(y,v)|^{2}\, dv \right)^{1/2} \\
\nonumber & \lesssim \left( |B(y, r_{l})|^{-(1+\epsilon_1)} \int_{B( z_0, r_{l+1})} |f(v)|^2 \, dv  \right)^{1/2} \left( \int_{\mathscr A_{l}} d( z_0, v)^{Q (1 + \epsilon_1)} |T_{j}(y,v)|^{2}\, dv \right)^{1/2} \\ 
\nonumber & \lesssim \frac{1}{|B(y, r_{l})|^{\frac{\epsilon_1}{2}}} \left(\frac{1}{|B( z_0, r_{l+1})|} \int_{B( z_0, r_{l+1})}|f(v)|^2 \, dv \right)^{1/2} \left( \int_{\mathscr A_{l}} d(y,v)^{Q (1 + \epsilon_1)} |T_{j}(y,v)|^{2}\, dv \right)^{1/2} \\ 
\nonumber &\lesssim_{\epsilon_1} \left( 2^l \, r_0^Q \right)^{-\epsilon_1 / 2} \mathcal M_{2}f(x) \, 2^{-jQ (1 + \epsilon_1)/4} |B(y, 2^{-j/2})|^{-1/2} \\ 
\nonumber & \lesssim 2^{- l \epsilon_1 / 2} \left(2^{j/2} r_0 \right)^{-Q \epsilon_1 /2} \mathcal M_{2}f(x), 
\end{align} 
where we have used $|B(y, r_{l})| \gtrsim |B( z_0, r_{l+1})|$ from \eqref{relation:annulus-part-ball-vol-comp-sparse-proof} in the third from the last inequality, the ball volume lower bound from \eqref{relation:l-stage-ball-vol-comp-sparse-proof} and condition \eqref{cond:General-hypo} in the second last inequality.

Next, by suitably modifying the above calculations, we obtain 
\begin{align} \label{mainthm2} 
&\int_{\mathscr{A}_{l} \cap \{|y'|> d( z_0, v)\}}| T^{1}_{j}(y,v)| \, |f(v)|\, dv \\
\nonumber &\leq \left(\int_{\mathscr A_{l} \cap \{|y'|> d( z_0, v)\}}|B(y, d( z_0, v))|^{-(1+ \epsilon_1)}|f(v)|^2 \, dv \right)^{\frac{1}{2}}\\
\nonumber & \quad \times \left(\int_{\mathscr A_{l} \cap \{|y'|>d( z_0, v)\}}|B(y, d( z_0, v))|^{1+ \epsilon_1} |T^{1}_{j}(y,v)|^{2}\, dv \right)^{\frac{1}{2}}\\
\nonumber & \lesssim \left(\frac{1}{|B(y, r_{l})|^{ \epsilon_1} |B( z_0, r_{l+1})|} \int_{B( z_0, r_{l+1})}|f(v)|^2 \, dv \right)^{\frac{1}{2}} \\ 
\nonumber & \quad \times \left(\int_{\mathscr A_{l} \cap \{|y'|> d( z_0, v)\}} d( z_0, v)^{(n_1+ n_2)(1+ \epsilon_1)}|y'|^{\varkappa n_2(1+ \epsilon_1)}| T_{j}(y,v)|^{2} |\phi (y,v)|^{2} \, dv \right)^{\frac{1}{2}} \\
\nonumber & \lesssim_{\epsilon_1} 2^{-l \epsilon_1 / 2} r_{0}^{-(n_1+ n_2) \epsilon_1/2} |y'|^{\varkappa n_2 / 2} \mathcal M_2f(x) \, 2^{-j(n_1+ n_2) (1+\epsilon_1) / 4} |B(y, 2^{-j/2})|^{-1/2} \\
\nonumber & = 2^{-l \epsilon_1 / 2} \left( 2^{j/2} r_{0} \right)^{-(n_1+ n_2) \epsilon_1/2} \mathcal M_2f(x). 
\end{align}

Putting the bounds of \eqref{mainthm1} and \eqref{mainthm2} into \eqref{eq:M2-sparse-thm-estimate1-annulusdecomposition-Case1} we get 
\begin{align} \label{estimate2} 
I_{j} & \lesssim_{\epsilon_1} \left\{ \left( 2^{j/2} r_{0} \right)^{-(n_1+ n_2) \epsilon_1/2} + \left( 2^{j/2} r_{0} \right)^{-Q \epsilon_1/2} \right\} \mathcal M_2f(x) \sum_{l \geq 0} 2^{-l \epsilon_1 / 2} \\ 
\nonumber & \lesssim_{\epsilon_1} \left\{ \left( 2^{j/2} r_{0} \right)^{-(n_1+ n_2) \epsilon_1/2} + \left( 2^{j/2} r_{0} \right)^{-Q \epsilon_1/2} \right\} \mathcal M_2f(x).
\end{align}

\medskip \noindent \textbf{\underline{Case 1 ($r_0 \geq 1$)}:}

In this case, we apply the bounds of $I_{j}$ from \eqref{estimate2} into \eqref{eq:M2-sparse-thm-estimate1} to get 
\begin{align*}
& \left| T^{1} \left(f \chi_{\mathbb{R}^{n+n_2} \setminus B( z_0, r_0))} \right) (y) - T^{1} \left(f \chi_{\mathbb{R}^{n_1+n_2}\setminus B( z_0, r_0))} \right) (z) \right| \\ 
& \lesssim_{\epsilon_1} \sum_{j \geq 0} \left\{ \left( 2^{j/2} \right)^{-(n_1+ n_2) \epsilon_1/2} + \left( 2^{j/2} \right)^{-Q \epsilon_1/2} \right\} \mathcal M_2f(x) \lesssim_{\epsilon_1} \mathcal{M}_2f(x). 
\end{align*} 

\medskip \noindent \textbf{\underline{Case 2 ($r_0 < 1$)}:}

Let us choose and fix $j_{0}$ such that $r_{0} 2^{j_{0}/2} \sim 1$. Let us consider the infinite sum of \eqref{eq:M2-sparse-thm-estimate1}, and stack the pieces in the following way: 
\begin{align} \label{eq:M2-sparse-thm-estimate2} 
&|T^{1}(f \chi_{\mathbb{R}^{n+n_2}\setminus B( z_0, r_0))})(y)-T^{ 1}(f \chi_{\mathbb{R}^{n_1+n_2}\setminus B( z_0, r_0))})(z)|\leq \sum_{j=0}^{j_0} I_{j}+\sum_{j>j_{0}} I_{j}, 
\end{align}
where $I_j$'s are same as in \eqref{eq:M2-sparse-thm-estimate1}. 

For the infinite sum (that is, when $j > j_0$) in \eqref{eq:M2-sparse-thm-estimate2}, we use the bounds of $I_{j}$ from the estimate \eqref{estimate2}, and while doing so we also make use of the fact that $\left( 2^{j/2} r_0 \right)^{-(n_1+ n_2) \epsilon_1/2} + \left( 2^{j/2} r_0 \right)^{-Q \epsilon_1/2} \sim_{\epsilon_1} \left( 2^{j/2} r_0 \right)^{-(n_1+ n_2) \epsilon_1/2}$ uniformly for $j > j_0$. We can conclude that 
\begin{align} \label{estimate3}
\sum_{j > j_{0}} I_{j} & \lesssim_{\epsilon_1} r_0^{-(n_1+ n_2) \epsilon_1/2} \mathcal{M}_2 f(x) \sum_{j>j_{0}} 2^{- j (n_1+ n_2) \epsilon_1 / 4} \\ 
\nonumber & \lesssim_{\epsilon_1} r_0^{-(n_1+ n_2) \epsilon_1/2} \mathcal{M}_2f(x) 2^{-j_0(n_1+ n_2) \epsilon_1/4} \lesssim_{\epsilon_1} \mathcal M_2f(x),
\end{align}
where the last inequality follows from the relation $r_{0} 2^{j_{0}/2} \sim 1$. 

We are therefore left to estimate the finite sum (that is, when $j \leq j_0$) of \eqref{eq:M2-sparse-thm-estimate2}. In estimating these pieces, we make use of the gradient estimates of the kernels. Decompose each $I_{j}$ as in \eqref{eq:M2-sparse-thm-estimate1-annulusdecomposition-Case1} and note that
\begin{align}
|T^{1}_{j}(y,v) - T^{1}_{j}(z,v)| & = |T_{j}(y,v) \phi(d(y,v)) - T_{j}(z,v) \phi(d(z,v))| \label{GrushinMVT} \\
\nonumber & \leq |(T_{j}(y,v)-T_{j}(z,v))\phi(d(y,v))|+|T_{j}(z,v)(\phi(d(y,v))-\phi(d(z,v)))|. 
\end{align}

To estimate the first term in \eqref{GrushinMVT}, we make use of the mean-value estimate from Lemma \ref{lem:general-grushin-Mean-value}, to get that 
\begin{align} \label{Kernel-Mean-value}
|T_{j}(y,v) - T_{j}(z,v)| & \lesssim d(z, y) \int_0^1 \left| \left( X_x T_j \right) \left( \gamma_0(t), v \right) \right| dt, 
\end{align}
with $\gamma_0(t) \in B( z_0, C_{1, \varkappa} \, r)$. 

Therefore, we are left with estimating $d(z, y) \int_{\mathscr A_{l}} \left| \left( X_x T_j \right) \left(\gamma_0(t), v\right) \right| \, |f(v)| \, dv$ uniformly in $t\in (0, 1).$ Fix $t\in (0, 1).$ Now, as also done immediately after \eqref{eq:M2-sparse-thm-estimate1-annulusdecomposition-Case1}, we decompose the annulus $\mathscr A_{l}$ into two regions $ \left\{ v \in \mathscr A_{l} : |\gamma_0(t)'| \leq d( z_0,v) \right\}$ and $ \left\{ v \in \mathscr A_{l} : |\gamma_0(t)'| > d( z_0,v) \right\}.$ It is easy to see that with some obvious modification in the computations of \eqref{mainthm1}, one can obtain 
\begin{align} 
&\nonumber d(z, y) \int_{\{ v \in \mathscr A_{l} : |\gamma_0(t)'| \leq d( z_0,v)\}} \left| \left( X_x T_j \right) \left(\gamma_0(t), v\right) \right| \, |f(v)| \, dv \\ 
&\nonumber \lesssim d(y, z) \left(\frac{1}{|B(\gamma_0(t), r_{l})|^{\epsilon_1} |B(\gamma_0(t), r_{l+1})|} \int_{B( z_0, r_{l+1})}|f(v)|^2 \, dv \right)^{1/2} \\ 
\nonumber & \quad \times \left( \int_{\mathscr A_{l}} d( z_0,v)^{Q \left( 1 + \epsilon_1 \right)} \left| \left( X_x T_j \right) \left((\gamma_0(t), v \right) \right|^{2} \, \phi(d(y,v))^2 dv  \right)^{1/2} \\ 
\nonumber & \lesssim \frac{r_0}{\left( 2^l r_0^Q \right)^{\epsilon_1/2}} \left(\frac{1}{|B( z_0, r_{l+1})|} \int_{B( z_0, r_{l+1})}|f(v)|^2 \, dv \right)^{1/2} \\ 
& \quad \times \left(  \int_{\mathscr A_{l}} d(\gamma_0(t),v)^{Q(1+\epsilon_1)} \left| \left( X_x T_j \right) \left( (\gamma_0(t), v \right) \right|^{2} \phi(d(y,v))^2 \, dv  \right)^{1/2}. 
\label{imp-phi}
\end{align}
Since the function $\phi$ is supported on $[-5C^2_0,5C^2_0]$, we have $d(y,v)\leq  5C^2_0$ in the domain of the last integration. Also $d( z_0, v)$ and $d(y, v)$ are comparable since $r < 1$, $y\in B( z_0, r)$ and $v\in \mathbb{R}^{n_1+n_2} \setminus B( z_0, r_{0})$, hence $d( z_0, v)\leq_{\varkappa, C_{0}} d(y, v)\leq C_{\varkappa, C_{0}}$ in the range of the integration. Finally using the fact that $d( z_0, v)$ is comparable to $d(\gamma_0(t), v)$, condition \eqref{cond:General-hypo-grad} is applicable, and therefore the above term is dominated by 
\begin{align*} 
r_0 (2^l r_0^Q)^{-\epsilon_1/2} \, \mathcal{M}_2 f (x) \, 2^{-j Q (1 + \epsilon_1) /4} 2^{j/2} |B(\gamma_0 (t), 2^{-j/2})|^{-1/2} \lesssim \mathcal{M}_{2}f(x) \, 2^{- l \epsilon_1/2} \left( 2^{j/2} r_0 \right)^{1 - Q \epsilon_1 /2}.
\end{align*} 
Next, as in \eqref{mainthm2}, one can modify the above arguments to show that 
\begin{align*}
& d(z, y) \int_{\{ v \in \mathscr A_{l} : |\gamma_0(t)'| > d( z_0,v)\}} \left| \left( X_x T_j \right) \left(\gamma_0(t), v\right) \right| \, |f(v)| \, dv \\ 
& \lesssim_{\epsilon_1} \mathcal{M}_{2}f(x) \, 2^{- l \epsilon_1/2} \left( 2^{j/2} r_0 \right)^{1 - (n_1 + n_2) \epsilon_1 /2}.    
\end{align*}

Let us now consider the second term in \eqref{GrushinMVT}. By mean-value estimate from Lemma \ref{lem:general-grushin-Mean-value}, 
\begin{align*} 
|\phi(d(y,v)) - \phi(d(z,v))| & \lesssim d(z, y) \int_0^1 \left|   \phi^{\prime}  \left( d(\gamma_0(t), v) \right) \right| \left|Xd(\gamma_0(t), v) \right| dt. 
\end{align*}
From the definition of $\phi$ we see that $\phi^{\prime} \left( d(\gamma_0(t), v) \right)$ can be non-zero only when $3C^2_0 \leq d(\gamma_0(t), v) \leq 5 C_0^2$. Now, using the arguments of proof of part (iv) of Lemma $2.8$ of \cite{Bagchi-Garg-1}, in view of the lower and upper bound of the distance function where the derivative survives, we can conclude that $|Xd(\gamma_0(t), v)|$ is uniformly bounded for $\gamma_0(t) \in B( z_0, C_{1,\varkappa}r)$, $v \in \mathbb{R}^{n_1+n_2}\setminus B( z_0, r_0)$, and $r_0 < 1$. Therefore, the second term in \eqref{GrushinMVT} is dominated by $d(y,z) |T_j(z, v)|$. 

Decomposing the annulus $\mathscr{A}_{l}$ into two regions exactly as earlier, and in view of the presence of the extra term $d(y, z)$, we obtain
\begin{align*}
d(y, z) \int_{\mathscr{A}_{l}}|T_{j}(z, v)| \, |f(v)| \, dv & \lesssim_{\epsilon_1} r_0 2^{- l \epsilon_1 / 2} \left(2^{j/2} r_0 \right)^{-Q \epsilon_1 /2} \mathcal M_{2}f(x) \\ 
& \lesssim 2^{- l \epsilon_1 / 2} \left(2^{j/2} r_0 \right)^{1 - Q \epsilon_1 /2} \mathcal M_{2}f(x), 
\end{align*}
where we have used the fact that $2^{j_0/2} r_0 \sim 1$ implies 
$$\left( 2^{j/2} r_0 \right)^{-(n_1+ n_2) \epsilon_1/2} + \left( 2^{j/2} r_0 \right)^{-Q \epsilon_1/2} \sim_{\epsilon_1} \left( 2^{j/2} r_0 \right)^{-Q \epsilon_1/2}$$ 
uniformly for $j \leq j_0$. 

We can now estimate the finite sum $\sum_{j=0}^{j_0} I_{j}$ of \eqref{eq:M2-sparse-thm-estimate2} as follows: 
\begin{align} \label{mainthm5}
\sum_{j=0}^{j_{0}}I_{j} & \lesssim_{\epsilon_1} \mathcal{M}_{2}f(x)  \sum_{l=0}^{\infty} \sum_{j=0}^{j_0} 2^{- l \epsilon_1 / 2} \left(2^{j/2} r_0 \right)^{1 - Q \epsilon_1 /2} \\
\nonumber & \lesssim \mathcal{M}_{2}f(x) r_0^{1 - Q \epsilon_1 /2} \left( \sum_{j=0}^{j_0} 2^{j(1 - \frac{Q \epsilon_1}{2}) /2} \right) \left( \sum_{l \geq 0} 2^{- l \epsilon_1 / 2} \right) \\ 
\nonumber & \lesssim_{\epsilon_1} \mathcal{M}_{2}f(x) \left(2^{j_0/2} r_0 \right)^{1 - Q \epsilon_1 /2} \\ 
\nonumber & \lesssim_{\epsilon_1} \mathcal{M}_{2}f(x), 
\end{align}
where we have again used the fact $r_{0} 2^{j_{0}/2}\sim 1$ in the last inequality. 

\medskip \noindent \textbf{\underline{Estimation of $\mathfrak{I}_{2}$ in \eqref{eq:M2-sparse-thm-estimate0}}:} 

We follow the analysis which is very similar to the one for $\mathfrak{I}_{1}$. The key change here is that we do not need to make use of the mean-value estimate. Note that   
\begin{align}
\mathfrak{I}_{2}\leq \sum_{j\geq 0} \int_{\X\setminus B( z_0, r_{0})}|T^2_{j}(y, v)-T^2_{j}(z, v)| |f(v)| dv:=\sum_{j\geq 0} J_{j}.  
\end{align}
Let us write each $J_{j}$ as follows:
\begin{align}\label{T_2-kerenel-each piece} 
J_{j}\leq \int_{\X\setminus B( z_0, r_{0})}|T^2_{j}(y, v)||f(v)| \ dv + \int_{\X\setminus B( z_0, r_{0})}|T^2_{j}(z, v)||f(v)| \ dv. 
\end{align}

We only calculate the first term of right side of \eqref{T_2-kerenel-each piece}, as the calculation for second term is similar. 

\medskip \noindent \textbf{\underline{Case 1 ($r_0 \geq 1$)}:}
We have that 
\begin{align*}
\int_{\X\setminus B( z_0, r_{0})}|T^2_{j}(y, v)||f(v)| \ dv & = \int_{\X\setminus B( z_0, r_{0})}|T_{j}(y, v)|(1-\phi(d(y,v)) |f(v)| \ dv \\ 
& \leq  \int_{\X \setminus B( z_0, r_{0})}|T_{j}(y, v)||f(v)| \ dv.
\end{align*}
In this case, we can make estimation same as we did for $\mathfrak{I}_{1}$ to show that the above is dominated by 
$$ \left\{ \left( 2^{j/2} \right)^{-(n_1+ n_2) \epsilon_1/2} + \left( 2^{j/2} \right)^{-Q \epsilon_1/2} \right\} \mathcal M_2f(x), $$ 
and therefore if we take sum over $j\geq 0$ we get 
\begin{align*}
\sum_{j \geq 0} \left\{ \left( 2^{j/2} \right)^{-(n_1+ n_2) \epsilon_1/2} + \left( 2^{j/2} \right)^{-Q \epsilon_1/2} \right\} \mathcal M_2f(x) \lesssim_{\epsilon_1} \mathcal{M}_{2}f(x). 
\end{align*} 

\medskip \noindent \textbf{\underline{Case 2 ($r_0 < 1$)}:}
Since $\phi(t)=1$ for $|t|\leq 3C^2_0$, we have 
$$ \int_{\X\setminus B( z_0, r_{0})}|T^2_{j}(y, v)| |f(v)| \ dv \lesssim \int_{d(y, v)>3C^2_0}|T_{j}(y, v)| |f(v)| \ dv.$$

Since $r<1$ and $d(y,v)> 3C^2_0$, by triangle inequality we get $d(x,v)>C_0$. Therefore, the above term is dominated by 
\begin{align*}
\int_{d(x, v)>C_0}|T_{j}(y, v)||f(v)| \ dv.
\end{align*}

In the beginning of the proof of the theorem, we performed a decomposition of the space into annuli with keeping $ z_0$ as the centre. We can make an analogous decomposition with keeping $x$ as the centre. This is to ensure that the balls in the integral average contain $x$ and that would help us establishing bounds involving $\mathcal{M}_2 f(x)$. So, let us write $\mathfrak{s}_{0} = C_0$, and choose a sequence $\{\mathfrak{s}_{l}\}$ such that $\mathfrak{s}_{l+1}=2^{k_{l}}\mathfrak{s}_{l}$, $k_{l}\in\mathbb{N}$, with $|B(x,\mathfrak{s}_{l+1})|>2|B(x,\mathfrak{s}_{l})|$ and $|B(x,2^{k_{l}-1}\mathfrak{s}_{l})\leq 2|B(x,\mathfrak{s}_{l})|$. Let $\mathscr B_{l}$ denotes the annulas $B(x,\mathfrak{s}_{l+1})\setminus B(x,\mathfrak{s}_{l})$. Then, 
\begin{align*}
\int_{d(x,v)>C_0}|T_{j}(y,v)| |f(v)|\, dv 
&\leq \sum_{l}\int_{B(x,\mathfrak{s}_{l+1})\setminus B(x,\mathfrak{s}_{l})}|T_{j}(y,v)| |f(v)|\, dv \\
&=\sum_{l}\left(\int_{\mathscr B_{l}: |y^{\prime}|\leq  d(x,v)}+\int_{\mathscr{B}_{l}: |y^{\prime}|> d(x,v)}\right)|T_{j}(y,v)| |f(v)|\, dv.
\end{align*}
	
With the above terms, repeating the calculations as done earlier, it is easy to prove that 
\begin{align*}
\int_{\mathscr B_{l}: |y^{\prime}|\leq d(x,v)}|T_{j}(y,v)| |f(v)|\, dv & \lesssim_{\epsilon_1} 2^{- l \epsilon_1 / 2} \left(2^{j/2} r_0 \right)^{-Q \epsilon_1 /2} \mathcal M_{2}f(x), \\
\int_{\mathscr B_{l}:|y^{\prime}|> d(x,v)}|T_{j}(y,v)| |f(v)|\, dv & \lesssim_{\epsilon_1} 2^{-l \epsilon_1 / 2} \left( 2^{j/2} r_{0} \right)^{-(n_1+ n_2) \epsilon_1/2} \mathcal M_2f(x).
\end{align*}

Using the above inequalities and taking sum over $j \geq 0$ and $l \geq 0$, we get the desired estimate. This completes the proof of Theorem \ref{thm:domination-maximal-M2}. 
\end{proof}

\begin{remark} \label{rem:introduction-phi-function-minimal-smoothness}
We would like to highlight the importance of the decomposition of the operators $T_{j}$ into $T_{j}^1$ and $T_{j}^2$ in the proof of Theorem \ref{thm:domination-maximal-M2}. Since our goal is to prove Theorem~\ref{thm:pseudo-grushin-a=0-less-derivative} with $\lfloor Q/2\rfloor$ derivatives on the frequency variable ($\eta$-variable) of the function $X_{x}m(x, \eta),$ the presence of the function $\phi$ in the first term of \eqref{GrushinMVT} and subsequently in \eqref{imp-phi} allows us to reduce the integration on a compact set  and therefore we can apply condition \eqref{cond:General-hypo-grad}. Moreover, this fact combined with Corollary~\ref{cor-pseudo:grad-unweighted-L-2-estimate-compact} ensures that we only need $\lfloor Q/2\rfloor$ derivatives on the frequency variable ($\eta$-variable) of the function $X_{x}m(x, \eta).$ In principle, if one is not concerned with the optimality of the number of derivatives, such a decomposition can be avoided.
\end{remark}


\subsection{Proof of Theorem \ref{thm:main-sparse-more-derivative}} \label{subsec:proof-thm:main-sparse-more-derivative}
In this subsection, we develop the proof of Theorem \ref{thm:main-sparse-more-derivative}. In view of Theorem \ref{thm:Lorist-general-sparse-principle}, Theorem \ref{thm:main-sparse-more-derivative} follows from Theorems \ref{thm:domination-maximal-Mp} and \ref{thm:weak-type-bound-operator} which we shall establish.

We begin with Theorem \ref{thm:domination-maximal-Mp} which is about a sufficient condition ensuring the control of the grand truncated maximal operator $\mathcal{M}^{\sharp}_{T, s}$ by $\mathcal{M}$, which in turn would imply the weak type $(1, 1)$ boundedness for the grand truncated maximal operator $\mathcal{M}^{\sharp}_{T, s}$. 

\begin{theorem} \label{thm:domination-maximal-Mp}
Let $T \in \mathcal{B} \left( L^2({\mathbb{R}^{n_1+n_2}}) \right)$ be such that it admits a decomposition $T=\sum_{j\geq 0}T_{j}$, with the convergence in the strong operator topology, and that the integral kernels $T_j(x,y)$ satisfy conditions \eqref{cond:General-hypo-sup} and \eqref{cond:General-hypo-grad-sup} for some $R_0 > Q$. Then, for $s = 8 C_{1, \varkappa} C_{0}^3 \delta_0^{-1}$ with $\delta_0$ as in Theorem \ref{thm:dyadic-cubes} and $C_{1, \varkappa}$ as in Lemma \ref{lem:general-grushin-Mean-value}, we have the following pointwise almost everywhere estimate 
\begin{align}
\mathcal{M}^{\sharp}_{T, s}f(x) \lesssim_{T, \varkappa, s} \mathcal{M}f(x)
\end{align} 
for every bounded measurable function $f$ with compact support. 
\end{theorem}
\begin{proof}
The proof follows on exact same lines of the proof of Theorem \ref{thm:domination-maximal-M2}. The only difference is that in various integral estimations, we do not apply Cauchy-Schwarz inequality as we have already assumed pointwise weighted estimates of the kernels. For the sake of convenience, keeping notations of proof of Theorem \ref{thm:domination-maximal-M2}, let us repeat calculations of \eqref{mainthm1} in our case here. 
\begin{align} \label{mainthm1-M}
& \int_{\mathscr A_{l} \cap \{|y'| \leq d( z_0, v)\}}|T^{1}_{j}(y,v)| \, |f(v)|\, dv \\
\nonumber & \leq \int_{\mathscr A_{l} \cap \{|y'| \leq d( z_0, v)\}} |B(y, d( z_0, v))|^{-(1+\epsilon_1)} |f(v)| |B(y, d( z_0,v))|^{(1+\epsilon_1)} |T^{1}_{j}(y,v)|\, dv \\
\nonumber & \lesssim |B(y, r_{l})|^{-(1+\epsilon_1)} \int_{\mathscr A_{l} \cap \{|y'| \leq d( z_0, v)\}} |f(v)|  d( z_0, v)^{Q (1+\epsilon_1)} |T_{j}(y,v)| |\phi (y,v)| \, dv \\
\nonumber & \lesssim |B(y, r_{l})|^{-(1+\epsilon_1)} \int_{\mathscr A_{l} \cap \{|y'| \leq d( z_0, v)\}} |f(v)|  d(y, v)^{Q (1+\epsilon_1)} |T_{j}(y,v)| \, dv \\
\nonumber & \lesssim_{\epsilon_1} |B(y, r_{l})|^{-(1+\epsilon_1)} \int_{B( z_0, r_{l+1})} |f(v)| \frac{2^{-j Q (1+\epsilon_1) / 2}}{|B(y, 2^{-j/2})|^{1/2}|B(v, 2^{-j/2})|^{1/2}}  \, dv \\
\nonumber & \lesssim 2^{-j \epsilon_1 / 2} |B(y, r_{l})|^{-\epsilon_1} \mathcal{M}f(x) \\ 
\nonumber & \lesssim 2^{-l \epsilon_1} \left( r_0 2^{j/2} \right)^{-\epsilon_1} \, \mathcal M f(x). 
\end{align} 
Similar to the above one, we can derive all other estimates of proof of Theorem \ref{thm:domination-maximal-M2}, and we leave the details. 
\end{proof}


Finally, we discuss the following result concerning the weak type $(1,1)$-boundedness. 

\begin{theorem} \label{thm:weak-type-bound-operator} 
Let $T \in \mathcal{B} \left( L^2({\mathbb{R}^{n_1+n_2}}) \right)$ be such that it admits a decomposition $T=\sum_{j\geq 0}T_{j}$, with the convergence in the strong operator topology, and that the integral kernels $T_j(x,y)$ satisfy condition \eqref{cond:General-hypo-sup} and \eqref{cond:General-hypo-y-grad-sup} for some $R_0 \geq Q + \frac{1}{2}$, then $T$ is weak type $(1,1)$. 
\end{theorem}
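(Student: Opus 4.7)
The plan is to prove weak type $(1,1)$ by the standard Calder\'on--Zygmund method adapted to the homogeneous space $(\mathbb{R}^{n_1+n_2}, d, |\cdot|)$, combined with the scale-by-scale analysis afforded by the decomposition $T = \sum_j T_j$ from \eqref{def:operator-countable-break}. Given $f \in L^1$ and $\alpha > 0$, I would first invoke the CZ decomposition in the Christ dyadic lattice of Theorem \ref{thm:dyadic-cubes} at height $\alpha$, writing $f = g + \sum_Q b_Q$ with the standard properties ($\|g\|_\infty \lesssim \alpha$, $\int b_Q = 0$, $\mathrm{supp}\, b_Q \subset Q$, $\|b_Q\|_1 \lesssim \alpha |Q|$, and $\sum_Q |Q| \lesssim \alpha^{-1}\|f\|_1$). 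The good part is immediate from $L^2$ boundedness: $\|Tg\|_2^2 \lesssim \|g\|_2^2 \leq \|g\|_\infty \|g\|_1 \lesssim \alpha \|f\|_1$, so Chebyshev gives $|\{|Tg| > \alpha/2\}| \lesssim \alpha^{-1}\|f\|_1$. Setting $E = \bigcup_Q \tilde Q$ for a suitable fixed enlargement of each $Q$ (chosen so that $x \notin E$ implies $d(x, c_Q) \geq C r_Q$ with $C$ large enough to absorb the quasi-metric constant), doubling yields $|E| \lesssim \alpha^{-1}\|f\|_1$, and the matter reduces to proving the uniform estimate
\begin{align*}
\int_{E^c} |T b_Q(x)|\,dx \lesssim \|b_Q\|_1
\end{align*}
for each $Q$; summing this over $Q$ and applying Chebyshev closes the argument.

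For each $Q \in \mathcal{S}_{k_Q}$ of scale $r_Q \sim \delta_0^{k_Q}$ I pick the unique $j_Q \in \mathbb{N}$ with $2^{-j_Q/2} \sim r_Q$ and split $Tb_Q = \sum_{j \leq j_Q} T_j b_Q + \sum_{j > j_Q} T_j b_Q$. For the \emph{fine-scale} range $j > j_Q$ (kernel scale finer than the cube), the pointwise size bound \eqref{cond:General-hypo-sup} with $\mathfrak{r} = R_0$ is applied directly. Writing $\rho = 2^{-j/2}$, an annular decomposition $d(x, c_Q) \sim 2^k r_Q$ ($k \geq 0$) together with the doubling inequality \eqref{ineq:doubling-condition-grushin-metric} comparing $|B(x, \rho)|$ to $|B(c_Q, 2^k r_Q)|$ yields
\begin{align*}
\int_{d(x, c_Q) \gtrsim r_Q} |T_j(x, y)|\,dx \;\lesssim\; \sum_{k \geq 0} 2^{k(Q - R_0)} \left( \frac{\rho}{r_Q} \right)^{R_0 - Q} \;\lesssim\; 2^{-(j - j_Q)(R_0 - Q)/2}
\end{align*}
uniformly for $y \in Q$. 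The hypothesis $R_0 \geq Q + \tfrac{1}{2}$ forces $R_0 - Q \geq \tfrac{1}{2}$, making this summable over $j > j_Q$ after multiplying by $\|b_Q\|_1$.

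For the \emph{coarse-scale} range $j \leq j_Q$ (kernel broader than the cube), the cancellation $\int b_Q = 0$ enters: write
\begin{align*}
T_j b_Q(x) = \int \bigl(T_j(x, y) - T_j(x, c_Q)\bigr) b_Q(y)\,dy,
\end{align*}
and apply the mean-value inequality of Lemma \ref{lem:general-grushin-Mean-value} in the $y$-variable along a length-minimizing geodesic of $\tilde d$-length $\lesssim r_Q$ lying inside an enlargement of $Q$:
\begin{align*}
|T_j(x, y) - T_j(x, c_Q)| \;\lesssim\; r_Q \int_0^1 |X_y T_j(x, \gamma(t))|\,dt.
\end{align*}
The integral $\int_{E^c} |X_y T_j(x, \gamma(t))|\,dx$ is then handled by the same annular technique, this time using the gradient bound \eqref{cond:General-hypo-y-grad-sup}; the extra factor $2^{j/2}$ appearing there combines with the cancellation factor $r_Q$ to give $r_Q \cdot 2^{j/2} = 2^{(j - j_Q)/2}$, which is summable over $j \leq j_Q$. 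Altogether this produces $\int_{E^c} |Tb_Q|\,dx \lesssim \|b_Q\|_1$ as desired.

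The principal technical obstacle will be the annular bookkeeping in Grushin geometry, where $|B(x, r)|$ fails to be translation-invariant. The key auxiliary estimate $|B(x, \rho)|^{-1/2} \lesssim 2^{kQ/2} |B(y, 2^k \rho)|^{-1/2}$ whenever $d(x, y) \leq 2^k \rho$ is a direct consequence of doubling, and with it the $k$-sums reduce to geometric series $\sum_k 2^{k(Q - R_0)}$ that converge precisely because $R_0 > Q$, which is guaranteed by the hypothesis $R_0 \geq Q + \tfrac12$. A small additional care is needed in the coarse-scale case to treat the region $d(x, \tilde y) \leq \rho$ inside $E^c$ separately from the true annular tail, but this contributes only the base-scale term $\rho^{-1} \cdot |B(\tilde y, \rho)|^{1/2} \cdot |B(\tilde y, \rho)|^{-1/2} = 2^{j/2}$, consistent with the bound above.
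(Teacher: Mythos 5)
Your proposal is correct, and it arrives at the result by a genuinely different route than the paper. The paper first proves a pointwise H\"ormander-type estimate on the $j$-th kernel piece,
\begin{align*}
d(x,y)^{1/2}\,|T_j(x,y)-T_j(x,z)| \lesssim \frac{d(y,z)^{1/2}}{|B(x,d(x,y))|}\,\min\!\left\{\frac{2^{-j/4}}{d(y,z)^{1/2}},\,\frac{d(y,z)^{1/2}}{2^{-j/4}}\right\}
\end{align*}
for $d(y,z)\lesssim d(x,y)$, sums this geometric-in-$j$ bound to get a uniform H\"ormander kernel condition for the partial sums $\mathcal{T}_N = \sum_{j\le N} T_j$, and then invokes the abstract Calder\'on--Zygmund theory on homogeneous spaces (Coifman--Weiss), passing to $T$ by $L^2$-convergence. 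To control the weight $|B(x,d(x,y))|^{-1}$ their proof splits into the two regimes $|x'|\le 2d(x,y)$ and $|x'|>2d(x,y)$, explicitly invoking the Grushin ball-volume asymptotics $|B(x,r)|\sim r^{n_1+n_2}\max\{r,|x'|\}^{\varkappa n_2}$. You instead unpack the Calder\'on--Zygmund decomposition by hand, choose the threshold index $j_Q$ with $2^{-j_Q/2}\sim r_Q$, and estimate $T_jb_Q$ on $E^c$ scale by scale: for $j>j_Q$ you use only the size bound \eqref{cond:General-hypo-sup}, and for $j\le j_Q$ you use cancellation $\int b_Q=0$ together with the mean-value inequality of Lemma~\ref{lem:general-grushin-Mean-value} and the gradient bound \eqref{cond:General-hypo-y-grad-sup}. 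The underlying mechanism (split at the kernel scale, size for fine scales, gradient-plus-cancellation for coarse scales, and annular bookkeeping with $\sum 2^{k(Q-R_0)}<\infty$ because $R_0>Q$) is the same in both arguments. The trade-off is worth noting: the paper's H\"ormander-condition packaging is tidier and reusable, but needs the explicit volume formula to handle the case split on $|x'|$; your annular computation using only the doubling inequality $|B(x,\rho)|^{-1/2}\lesssim 2^{kQ/2}|B(y,2^k\rho)|^{-1/2}$ for $d(x,y)\le 2^k\rho$ avoids that case split entirely and would transfer verbatim to a general doubling metric space, at the cost of redoing the CZ machinery explicitly. Both are correct; the quantitative exponents agree.
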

\begin{proof}
We shall prove that the operators $\mathcal T_{N} = \sum_{j = 0}^N T_j$ are weak type $(1,1)$, with their operator norms $\| \mathcal T_{N} \|_{L^1 \to L^{1, \infty}}$ being uniform in $N \in \mathbb{N}$, which would imply that $T$ is also weak type $(1,1)$. To see this, let us assume that we have 
\begin{align} \label{ineq:weak-type-bound}
|\{x: |\mathcal{T}_Nf(x)|>\epsilon\}| \lesssim \frac{1}{\epsilon} \|f\|_{L^1}
\end{align}
for every $\epsilon> 0$, $N \in \mathbb{N}$ and $f \in \mathcal{S} (\mathbb{R}^{n_1 + n_2})$. 

Given $f \in \mathcal{S} (\mathbb{R}^{n_1 + n_2})$, since $\lim_{N \to \infty} \left\| \mathcal{T}_N f - T f \right\|_2 = 0$, it follows that $\mathcal{T}_N f$ converges to $T f$ in measure, that is, for every $\epsilon> 0$, 
\begin{align} \label{convergence-in-measure} 
\lim_{N \rightarrow \infty} |\{x: |\mathcal{T}_Nf(x)-Tf(x)|>\epsilon\}| =0.
\end{align}

In view of \eqref{ineq:weak-type-bound} and \eqref{convergence-in-measure}, 
\begin{align*}
|\{x: |Tf(x)|>\epsilon\}| & \leq |\{x: |\mathcal{T}_Nf(x)-Tf(x)|>\epsilon/2\}| + |\{x: |\mathcal{T}_Nf(x)|>\epsilon/2\}| \\ 
& \lesssim |\{x: |\mathcal{T}_Nf(x) - Tf(x)| > \epsilon/2 \}| + \frac{1}{\epsilon} \|f\|_{L^1} \\ 
& \xrightarrow{N \to \infty} \frac{1}{\epsilon} \|f\|_{L^1}, 
\end{align*}
establishing that $T$ is weak type $(1,1)$. 

We now claim that the following estimate holds true: 
\begin{align} \label{Hormander-piece}
d(x,y)^{1/2} |T_j(x,y)-T_j(x,z)| \lesssim \frac{d(y,z)^{1/2}}{|B(x,d(x,y))|} \min \left\{ \frac{2^{-j/4}}{d(y,z)^{1/2}},\frac{d(y,z)^{1/2}}{2^{-j/4}} \right\}, 
\end{align}
whenever $d(y, z) < \frac{1}{2 C_0} d(x, y)$. 

It follows from \eqref{Hormander-piece} that whenever $d(y, z) < \frac{1}{2 C_0} d(x, y)$ then we have 
\begin{align} \label{Hormander1} 
d(x,y)^{1/2} |\mathcal{T}_N(x,y) - \mathcal{T}_N(x,z)| \lesssim \frac{d(y,z)^{1/2}}{|B(x,d(x,y))|},
\end{align}
with the implicit bound uniform in $N$. 

With \eqref{Hormander1}, the proof of the weak type boundedness of $\mathcal{T}_N$ (together with the fact that the operator norms are uniform in $N$) follows from the uniform $L^2$-boundedness of $\mathcal{T}_N$ and the Calder\'on-Zygmund decomposition on homogeneous type spaces, for more details we refer to \cite{Coifman-Weiss-book-1971}. 

So, we proceed to establish \eqref{Hormander-piece}. 

\medskip \noindent \textbf{\underline{Case 1 (when $ |x'|\leq 2~ d(x,y)$)}:} 
In this case \eqref{Hormander-piece} is equivalent to 
\begin{align} \label{Hormander-piece-case1}
d(x,y)^{Q+ \frac{1}{2}} |T_j(x,y)-T_j(x,z)| \lesssim d(y,z)^{1/2}\min \left\{\frac{2^{-j/4}}{d(y,z)^{1/2}},\frac{d(y,z)^{1/2}}{2^{-j/4}}\right\}, 
\end{align}
whenever $d(y, z) < \frac{1}{2 C_0 C_{1,\varkappa}} d(x, y)$. 

Since $d(y, z) < \frac{1}{2 C_0 C_{1,\varkappa}} d(x, y)$, we have that $d(x, y)$ and $d(x, z)$ are comparable, and therefore we get from condition (\ref{cond:General-hypo-sup}) that 
\begin{align} \label{weak-ker} 
d(x,y)^{Q+\frac{1}{2}} |T_j(x,y)-T_j(x,z)| \lesssim d(y,z)^{1/2} \frac{2^{-j/4}}{d(y,z)^{1/2}}.
\end{align}

On the other hand, using mean-value estimate from Lemma \ref{lem:general-grushin-Mean-value}, we have 
\begin{align} \label{Grad-kernel-Mean-value}
d(x,y)^{Q+\frac{1}{2}}|T_j(x,y)-T_j(x,z)| & \lesssim  d(x,y)^{Q+\frac{1}{2}} d(y, z) \int_0^1 \left| \left( X_y T_j \right) \left(x, \gamma_0(t) \right) \right| dt.
\end{align}

Now from Lemma \ref{lem:general-grushin-Mean-value} we get $d(\gamma_0(t), y) \leq  C_{1,\varkappa} d(z,y)$. Thus, the assumption that $d(y, z) < \frac{1}{2 C_0 C_{1,\varkappa}} d(x, y)$ implies that $d(\gamma_0(t), y) < \frac{1}{2 C_0} d(x, y)$. And then, it follows that $d(x, y) \lesssim d(x, \gamma_0(t))$. With that, condition \eqref{cond:General-hypo-y-grad-sup} is applicable to imply that 
\begin{align}\label{Mean-value-y'}
d(x,y)^{Q+\frac{1}{2}} d(y, z) \int_0^1 \left| \left( X_y T_j \right) \left(x, \gamma_0(t)) \right) \right| dt \lesssim d(y, z) \, 2^{j/4} = d(y,z)^{1/2}\frac{d(y,z)^{1/2}}{2^{-j/4}}.
\end{align}

The above estimate implies that 
\begin{align}\label{weak-ker-grad}
d(x,y)^{Q+\frac{1}{2}}	|T_j(x,y)-T_j(x,z)| \lesssim d(y,z)^{1/2}\frac{d(y,z)^{1/2}}{2^{-j/4}}, 
\end{align}
and \eqref{weak-ker} and \eqref{weak-ker-grad} together establish the claimed estimate \eqref{Hormander-piece-case1}. 

\medskip \noindent \textbf{\underline{Case 2 (when $ |x'| > 2~ d(x,y)$)}:} 
In this case \eqref{Hormander-piece} is equivalent to 
\begin{align} \label{Hormander-piece-case2}
|x'|^{\varkappa n_2} d(x,y)^{n_1+n_2+\frac{1}{2}} |T_j(x,y)-T_j(x,z)| \lesssim d(y,z)^{1/2} \min \left\{ \frac{2^{-j/4}}{d(y,z)^{1/2}}, \frac{d(y,z)^{1/2}}{2^{-j/4}} \right\},
\end{align}
whenever $d(y, z) < \frac{1}{2 C_0 C_{1,\varkappa}} d(x, y)$. 

As earlier, from condition \eqref{cond:General-hypo-sup} and the fact that $d(x, y)$ and $d(x, z)$ are comparable whenever $d(y, z) < \frac{1}{2 C_0 C_{1,\varkappa}} d(x, y)$, we get 
\begin{align} \label{cent-ker}
|x'|^{\varkappa n_2}d(x,y)^{n_1+n_2+\frac{1}{2}} |T_j(x,y)-T_j(x,z)| \lesssim |x'|^{\varkappa n_2} \frac{2^{-j/4}}{|x'|^{\frac{\varkappa n_2}{2}} |y'|^{\frac{\varkappa n_2}{2}}} = 2^{-j/4}\frac{|x'|^\frac{\varkappa n_2}{2}}{|y'|^{\frac{\varkappa n_2}{2}}}.
\end{align}

Now, $|x'| > 2 d(x,y) \geq 2|x'-y'|\geq 2(|x'|-|y'|)$ implies that $|x'|\leq 2|y'|$. Therefore, we get from \eqref{cent-ker} that 
\begin{align}\label{cent-ker-1}
|x'|^{\varkappa n_2}d(x,y)^{n_1+n_2+\frac{1}{2}} |T_j(x,y)-T_j(x,z)| \lesssim 2^{-j/4} = d(y,z)^{1/2} \frac{2^{-j/4}}{d(y,z)^{1/2}}.
\end{align}

As in Case 1, one can apply the mean-value estimate to get the form analogous to  \eqref{Grad-kernel-Mean-value}, and then making use of condition \eqref{cond:General-hypo-y-grad-sup}  one can show that 
\begin{align} \label{cent-ker-grad}
|x'|^{\varkappa n_2}d(x,y)^{n_1+n_2+\frac{1}{2}} |T_j(x,y)-T_j(x,z)| & \lesssim d(y,z) \, |x'|^{\varkappa n_2} \frac{2^{j/4}}{|x'|^{\frac{\varkappa n_2}{2}} |y'|^{\frac{\varkappa n_2}{2}}} \\ 
\nonumber & \lesssim d(y,z)^{1/2} \frac{d(y,z)^{1/2}}{2^{-j/4}}. 
\end{align} 
where the last inequality follows from the fact that $|x'| \leq 2|y'|$. 

Combining \eqref{cent-ker-1} and \eqref{cent-ker-grad}, we have the claimed estimate \eqref{Hormander-piece-case2}, and this completes the proof of Theorem \ref{thm:weak-type-bound-operator}. 
\end{proof}


\section{Pseudo-multipliers associated to Grushin operators \texorpdfstring{$G_{\varkappa}$}{}} \label{sec:direct-pseudo-grushin}
In this section, we shall study pseudo-multiplier operators associated to Grushin operators $G_{\varkappa}$, and establish proof of Theorems \ref{thm:pseudo-grushin-a=0-unweighted}, \ref{thm:pseudo-grushin-a=0-less-derivative} and \ref{thm:pseudo-grushin-a=0-full-derivative}. We do so by analysing range of conditions on symbols that imply various conditions of the type listed between \eqref{cond:General-hypo} to \eqref{cond:General-hypo-grad-sup}. With that our claimed results would follow directly from Theorems \ref{thm:main-sparse},  \ref{thm:main-sparse-more-derivative} and \ref{thm:weak-type-bound-operator} in view of Theorem \ref{thm:Lorist-general-sparse-principle}. 

As seen in Section \ref{sec:kernel-estimates-and-pointwise-sparse-domination}, we would require weighted Plancherel estimates (pointwise as well as of $L^2$-type) for integral kernels $K_{m (x, G_{\varkappa})}(x, y)$ of pseudo-multiplier operators $m (x, G_{\varkappa})$. We shall show that the same could be established following (and slightly modifying) the ideas of \cite{AnhBuiDuongSpectralMultipliersBesovTriebelLizorkin,DuongOuhabazSikoraWeightedPlancherel2002JFA}. We establish these kernel estimates in the next subsection before we take up on our main results. 


\subsection{Weighted Plancherel (gradient) estimates} 
\label{subsec:gradient-weighted-Plancherel-estimates} 
Using estimates \eqref{est:derivative-bounds-heat-kernel-grushin}, a straight forward revision of the proof of Lemma 2.1 of \cite{DuongOuhabazSikoraWeightedPlancherel2002JFA} would lead to the following gradient estimates: 
\begin{align} \label{grad-heat-kernel}
|B(x, R^{-1})| \int_{\mathbb{R}^{n_1+n_2}} | X_x^{\Gamma} p_{R^{-2}}(x,y)|^{2} \, dy \lesssim_{\Gamma} R^{2|\Gamma|}, \\ 
\nonumber \text{and} \quad |B(x, R^{-1})| \int_{\mathbb{R}^{n_1+n_2}} | X_y^{\Gamma} p_{R^{-2}}(x,y)|^{2} \, dy \lesssim_{\Gamma} R^{2|\Gamma|}, 
\end{align} 
for all $\Gamma \in \mathbb{N}^{n_0}$.

Using estimates \eqref{grad-heat-kernel} and the $L^2$-boundedness of Riesz transforms of various orders (see \eqref{est:L2-boundedness-Riesz-transforms}), one can adapt the proof of Lemma 2.2 of \cite{DuongOuhabazSikoraWeightedPlancherel2002JFA} to establish the following unweighted Plancherel estimates for the gradients of integral kernel. 
\begin{lemma} \label{lem:grad-unweighted-L^2-estimate}
We have 
\begin{align} 
|B(x,R^{-1})| \int_{\mathbb{R}^{n_1+n_2}} \left| X_{x}^{\Gamma} K_{m (G_{\varkappa})}(x, y) \right|^2 dy & \lesssim_{\Gamma} R^{2|\Gamma|} \|m\|^2_{\infty},  \label{est:grad-unweighted-L^2-estimate-x-grad} \\ 
|B(x,R^{-1})| \int_{\mathbb{R}^{n_1+n_2}} \left| X_{y}^{\Gamma} K_{m (G_{\varkappa})}(x, y) \right|^2 dy & \lesssim_{\Gamma} R^{2|\Gamma|} \|m\|^2_{\infty} \label{est:grad-unweighted-L^2-estimate-y-grad}, 
\end{align} 
for all $\Gamma \in \mathbb{N}^{n_0}$ and for every bounded Borel function $m$ supported on $[0, R^2]$ for any $R>0$. 
\end{lemma}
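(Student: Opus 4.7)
The plan is to follow the approach of Lemma 2.2 in \cite{DuongOuhabazSikoraWeightedPlancherel2002JFA}, using the gradient heat kernel bounds \eqref{grad-heat-kernel} and the $L^2$-boundedness of the Riesz transforms $X^{\Gamma} G_{\varkappa}^{-|\Gamma|/2}$ from \eqref{est:L2-boundedness-Riesz-transforms} to absorb the additional $X^\Gamma$. As a preliminary step I would set $\tilde m(\lambda) := m(\lambda)\, e^{\lambda/R^2}$; since $m$ is supported in $[0, R^2]$, so is $\tilde m$, with $\|\tilde m\|_\infty \le e\,\|m\|_\infty$. Because $\tilde m(G_\varkappa)$ and $e^{-G_\varkappa/R^2}$ commute, the identity $m(G_\varkappa) = e^{-G_\varkappa/R^2}\tilde m(G_\varkappa) = \tilde m(G_\varkappa)\, e^{-G_\varkappa/R^2}$ produces two useful kernel representations,
\[
K_{m(G_\varkappa)}(x,y) = \int p_{R^{-2}}(x,z)\, K_{\tilde m(G_\varkappa)}(z,y)\, dz = \int K_{\tilde m(G_\varkappa)}(x,w)\, p_{R^{-2}}(w,y)\, dw,
\]
which I would use respectively for the $x$- and $y$-derivative estimates.

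For \eqref{est:grad-unweighted-L^2-estimate-x-grad}, differentiating the first representation in $x$ places $X^\Gamma$ entirely on the heat kernel,
\[
X_x^\Gamma K_{m(G_\varkappa)}(x,y) = \int q_x(z)\, K_{\tilde m(G_\varkappa)}(z,y)\, dz, \qquad q_x(z) := X_x^\Gamma p_{R^{-2}}(x,z).
\]
For fixed $x$ the right-hand side is the image of $q_x$ under the transposed integral operator $\tilde m(G_\varkappa)^t$, whose $L^2 \to L^2$ norm equals that of $\tilde m(G_\varkappa)$ (and hence is at most $\|\tilde m\|_\infty \lesssim \|m\|_\infty$). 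The gradient heat kernel bound \eqref{grad-heat-kernel} gives $\|q_x\|_2^2 \lesssim_\Gamma R^{2|\Gamma|}\, |B(x, R^{-1})|^{-1}$, and multiplying the resulting $L^2_y$-estimate by $|B(x, R^{-1})|$ yields \eqref{est:grad-unweighted-L^2-estimate-x-grad}.

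For \eqref{est:grad-unweighted-L^2-estimate-y-grad}, setting $\phi_x(w) := K_{\tilde m(G_\varkappa)}(x,w)$ and using the symmetry $p_{R^{-2}}(w,y) = p_{R^{-2}}(y,w)$, differentiation of the second representation in $y$ (pulled inside the integral by the Gaussian bounds) gives
\[
X_y^\Gamma K_{m(G_\varkappa)}(x,y) = X^\Gamma\bigl(e^{-G_\varkappa/R^2}\phi_x\bigr)(y).
\]
The unweighted Plancherel estimate \eqref{est:UnweightedPlancherelGrushin} applied to $\tilde m$ yields $\|\phi_x\|_2 \lesssim |B(x, R^{-1})|^{-1/2}\, \|m\|_\infty$. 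Factoring $X^\Gamma = (X^\Gamma G_\varkappa^{-|\Gamma|/2})\, G_\varkappa^{|\Gamma|/2}$, the Riesz factor is $L^2$-bounded by \eqref{est:L2-boundedness-Riesz-transforms}, and the elementary multiplier bound $\sup_{\lambda > 0}\lambda^{|\Gamma|/2} e^{-\lambda/R^2} \lesssim R^{|\Gamma|}$ gives $\|G_\varkappa^{|\Gamma|/2} e^{-G_\varkappa/R^2}\|_{L^2 \to L^2} \lesssim R^{|\Gamma|}$. Combining these ingredients produces $\|X_y^\Gamma K_{m(G_\varkappa)}(x,\cdot)\|_2 \lesssim R^{|\Gamma|}\, |B(x, R^{-1})|^{-1/2}\, \|m\|_\infty$, which is \eqref{est:grad-unweighted-L^2-estimate-y-grad}. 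The only technical point requiring care is the passage of $X^\Gamma$ inside the kernel integrals defining the factorizations; this is routine here because the compact spectral support of $m$ makes the relevant kernels smooth, and the Gaussian bounds \eqref{est:heat-kernel-pointwise-Grushin} together with \eqref{est:derivative-bounds-heat-kernel-grushin} supply integrable dominations that legitimize differentiation under the integral.
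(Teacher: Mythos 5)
Your proof is correct and rests on the same three ingredients the paper uses: the heat-semigroup factorization $m = \tilde m \cdot e^{-\cdot/R^2}$, the gradient heat-kernel bounds \eqref{grad-heat-kernel}, and the $L^2$-boundedness of the Riesz transforms \eqref{est:L2-boundedness-Riesz-transforms}. The organization differs in two welcome ways. First, the paper reduces both estimates (via the conjugation $K_{m(G_\varkappa)}(x,y)=\overline{K_{\bar m(G_\varkappa)}(y,x)}$) to estimates where one integrates in the first variable, and then applies the two composition identities; you instead work directly with the given integration variable, which is a bit more transparent. Second, and more substantively, for \eqref{est:grad-unweighted-L^2-estimate-y-grad} the paper estimates the operator norm of $Op\bigl(X_{x}^{\Gamma} K_{e^{-G_\varkappa/R^2}}(x,y)\bigr)$ via a duality pairing, which requires the $L^2$-boundedness of $G_\varkappa^{-|\Gamma|/2}X^\Gamma$ (an adjoint/transposed Riesz bound), whereas you identify $K_{m(G_\varkappa)}(x,\cdot)=e^{-G_\varkappa/R^2}\phi_x$ and apply the factorization $X^\Gamma=\bigl(X^\Gamma G_\varkappa^{-|\Gamma|/2}\bigr)G_\varkappa^{|\Gamma|/2}$ directly to this function, so the Riesz transform $X^\Gamma G_\varkappa^{-|\Gamma|/2}$ is invoked exactly in the form stated in \eqref{est:L2-boundedness-Riesz-transforms}. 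Your route is therefore slightly cleaner while being mathematically equivalent in substance.
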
 
\begin{proof}
Let us write $F_1 (\eta) = m(\eta) e^{\eta / R^2}$ and $F_2 (\eta) = e^{-\eta / R^2}$. Then, $m(\eta) = F_1 (\eta) F_2 (\eta) = F_2 (\eta) F_1 (\eta)$, and therefore following the basic arguments of the proof of Lemma 2.2 of \cite{DuongOuhabazSikoraWeightedPlancherel2002JFA} we have 
\begin{align} \label{est:grad-unweighted-L^2-estimate-kernel-composition-F12} 
K_{m (G_{\varkappa})}(x, y) = \int_{\mathbb{R}^{n_1+n_2}} K_{F_1(G_{\varkappa})}(x, z) K_{F_2(G_{\varkappa})}(z, y) \, dz, 
\end{align} 
and similarly 
\begin{align} \label{est:grad-unweighted-L^2-estimate-kernel-composition-F21} 
K_{m (G_{\varkappa})}(x, y) = \int_{\mathbb{R}^{n_1+n_2}} K_{F_2(G_{\varkappa})}(x, z) K_{F_1(G_{\varkappa})}(z, y) \, dz. 
\end{align} 

\medskip \noindent \underline{\textbf{Step 1}: estimate \eqref{est:grad-unweighted-L^2-estimate-x-grad} is true.} 

Since $G_{\varkappa}$ is self-adjoint, it follows that the $K_{m (G_{\varkappa})} (x, y) = \overline{K_{\bar{m} (G_{\varkappa})} (y, x)} $, and therefore, proving \eqref{est:grad-unweighted-L^2-estimate-x-grad} is equivalent to showing that 
$$ \int_{\mathbb{R}^{n_1+n_2}} \left| X_{y}^{\Gamma} K_{\bar{m} (G_{\varkappa})}(x, y) \right|^2 dx \lesssim R^{2|\Gamma|} |B(y, R^{-1})|^{-1} \|m\|^2_{\infty},$$ 
which we now prove. 

In view of \eqref{est:grad-unweighted-L^2-estimate-kernel-composition-F12}, we have 
\begin{align*}
\int_{\mathbb{R}^{n_1+n_2}} \left| X_{y}^{\Gamma} K_{\bar{m} (G_{\varkappa})}(x, y) \right|^2 dx & \lesssim \left\| F_1(G_{\varkappa}) \right\|^2_{op} \left\| X_y^{\Gamma} K_{F_2(G_{\varkappa})} (\cdot, y) \right\|^2_2 \\ 
& = \left\| F_1(G_{\varkappa}) \right\|^2_{op} \left\| X_y^{\Gamma} p_{R^{-2}} (\cdot, y) \right\|^2_2 \\ 
& \lesssim_{\Gamma} \left\| F_1 \right\|^2_{\infty} R^{2 |\Gamma|} |B(y, R^{-1})|^{-1} \\ 
& \lesssim R^{2|\Gamma|} |B(y, R^{-1})|^{-1} \left\| m \right\|^2_{\infty},
\end{align*}
where the second last inequality follows from \eqref{grad-heat-kernel}. This completes the proof of the claimed estimate. 

\medskip \noindent \underline{\textbf{Step 2}: estimate \eqref{est:grad-unweighted-L^2-estimate-y-grad} is true.} 

As seen earlier, since $G_{\varkappa}$ is self-adjoint, it suffices to show that 
$$ \int_{\mathbb{R}^{n_1+n_2}} \left| X_{x}^{\Gamma} K_{\bar{m} (G_{\varkappa})}(x, y) \right|^2 dx \lesssim R^{2|\Gamma|} |B(y, R^{-1})|^{-1} \|m\|^2_ {L^{\infty}},$$ 
which we now prove. 

Let $Op \left( X_{x}^{\Gamma} K_{F_2(G_{\varkappa})} (x,y) \right)$ stand for the operator whose integral kernel is $X_{x}^{\Gamma} K_{F_2(G_{\varkappa})} (x,y)$. Then, in view of \eqref{est:grad-unweighted-L^2-estimate-kernel-composition-F21}, we have 
\begin{align} \label{est:grad-unweighted-L^2-estimate-grad-parts}
\int_{\mathbb{R}^{n_1+n_2}} \left| X_{x}^{\Gamma} K_{\bar{m} (G_{\varkappa})}(x, y) \right|^2 dx & \lesssim \left\| Op \left( X_{x}^{\Gamma} K_{F_2(G_{\varkappa})} (x,y) \right) \right\|^2_{op} \left\| K_{F_1(G_{\varkappa})} (\cdot, y) \right\|^2_2 \\ 
\nonumber & \lesssim_{\Gamma} \left\| Op \left( X_{x}^{\Gamma} K_{F_2(G_{\varkappa})} (x,y) \right) \right\|^2_{op} |B(y, R^{-1})|^{-1} \left\| F_1 \right\|^2_{\infty} \\ 
\nonumber & \lesssim_{\Gamma} \left\| Op \left( X_{x}^{\Gamma} K_{F_2(G_{\varkappa})} (x,y) \right) \right\|^2_{op} |B(y, R^{-1})|^{-1} \left\| m \right\|^2_{\infty},
\end{align}
where we have made use of Lemma 2.2 of \cite{DuongOuhabazSikoraWeightedPlancherel2002JFA} in the second last inequality. 

Now, to estimate $\left\| Op \left( X_{x}^{\Gamma} K_{F_2(G_{\varkappa})} (x,y) \right) \right\|_{op}$, take any $f, g \in \mathcal{S} \left( \mathbb{R}^{n_1+n_2} \right)$, then we have 
\begin{align} \label{est:grad-unweighted-L^2-estimate-cauchy-schwarz}
\left| \left( Op \left( X_{x}^{\Gamma} K_{F_2(G_{\varkappa})} (x,y) \right) f, \, g \right) \right| & = \left| \int_{\mathbb{R}^{n_1+n_2}} \int_{\mathbb{R}^{n_1+n_2}} X_{x}^{\Gamma} K_{F_2(G_{\varkappa})} (x,y) f(y) \, \overline{g(x)} \, dy \, dx \right| \\ 
\nonumber & = \left| \int_{\mathbb{R}^{n_1+n_2}} \int_{\mathbb{R}^{n_1+n_2}} K_{F_2(G_{\varkappa})} (x,y) f(y) \, \overline{X_{x}^{\Gamma} g(x)} \, dy \, dx  \right| \\ 
\nonumber & = \left| \left( e^{- R^{-2} G_{\varkappa}} f, \, X_{x}^{\Gamma} g \right) \right| \\ 
\nonumber & = \left| \left( G_{\varkappa}^{|\Gamma|/2} e^{- R^{-2} G_{\varkappa}} f, \, G_{\varkappa}^{-|\Gamma|/2} X_{x}^{\Gamma} g \right) \right| \\ 
\nonumber & \leq \left\| G_{\varkappa}^{|\Gamma|/2} e^{- R^{-2} G_{\varkappa}} f \right\|_2 \left\| G_{\varkappa}^{-|\Gamma|/2} X_{x}^{\Gamma} g \right\|_2 \\ 
\nonumber & \lesssim_{\Gamma} \left\| G_{\varkappa}^{|\Gamma|/2} e^{- R^{-2} G_{\varkappa}} f \right\|_2 \, \left\| g \right\|_2, 
\end{align}
where we have used \eqref{est:L2-boundedness-Riesz-transforms} in the last inequality. 

Finally, note from \eqref{Gru-pseudo} that $G_{\varkappa}^{|\Gamma|/2} e^{- R^{-2} G_{\varkappa}} $ is a Grushin multiplier operator with symbol function $F_{R, \Gamma}$, where $F_{R, \Gamma} (\eta) = \eta^{|\Gamma|/2} e^{-R^{-2} \eta}$. Therefore, 
$$\left\| G_{\varkappa}^{|\Gamma|/2} e^{- R^{-2} G_{\varkappa}} f \right\|_2 \leq \| F_{R, \Gamma} \|_{\infty} \left\| f \right\|_2 \lesssim_{\Gamma} R^{|\Gamma|} \left\| f \right\|_2,$$
and putting this estimate in \eqref{est:grad-unweighted-L^2-estimate-cauchy-schwarz} we get 
$$\left\| Op \left( X_{x}^{\Gamma} K_{F_2(G_{\varkappa})} (x,y) \right) \right\|_{op} \lesssim_{\Gamma} R^{|\Gamma|}.$$
The claim of Step 2 now follows by using the above estimate in \eqref{est:grad-unweighted-L^2-estimate-grad-parts}. 

This completes the proof of Lemma \ref{lem:grad-unweighted-L^2-estimate}. 
\end{proof}


One can deduce the following analogue of Lemma \ref{lem:grad-unweighted-L^2-estimate} for pseudo-multipliers. 
\begin{corollary} \label{cor-pseudo:grad-unweighted-L^2-estimate}
We have 
\begin{align} 
|B(x,R^{-1})| \int_{\mathbb{R}^{n_1+n_2}} \left| X_{x}^{\Gamma} K_{m (x, G_{\varkappa})}(x, y) \right|^2 dy & \lesssim_{\Gamma} \sup_{x_{0} \in \mathbb{R}^{n_1+n_2}} \sum_{\Gamma_1 + \Gamma_2 = \Gamma} R^{2|\Gamma_1|} \|X^{\Gamma_2}_{x} m(x_0, \cdot) \|^2_{\infty}, \label{est-pseudo:grad-unweighted-L^2-estimate-x-grad} \\ 
|B(x,R^{-1})| \int_{\mathbb{R}^{n_1+n_2}} \left| X_{y}^{\Gamma} K_{m (x, G_{\varkappa})}(x, y) \right|^2 dy & \lesssim_{\Gamma} \sup_{x_{0} \in \mathbb{R}^{n_1+n_2}} R^{2|\Gamma|} \| m(x_0, \cdot) \|^2_{\infty} \label{est-pseudo:grad-unweighted-L^2-estimate-y-grad}, 
\end{align} 
for all $\Gamma \in \mathbb{N}^{n_0}$ and for every bounded Borel function $m : \mathbb{R}^{n_1 + n_2} \times \mathbb{R} \rightarrow \mathbb{C}$ whose support in the last variable is in $[0, R^2]$ for any $R>0$. 
\end{corollary}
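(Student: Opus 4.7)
The plan is to reduce the pseudo-multiplier estimates to the spectral-multiplier estimates in Lemma \ref{lem:grad-unweighted-L^2-estimate}. For each $x \in \mathbb{R}^{n_1+n_2}$ write $m_x(\eta) := m(x, \eta)$; then, directly from the definition \eqref{Gru-pseudo} of the pseudo-multiplier, the pseudo-multiplier kernel is related to the spectral-multiplier kernel by the diagonal identity
\begin{align*}
K_{m(x, G_{\varkappa})}(x, y) = K_{m_x(G_{\varkappa})}(u, y)\bigm|_{u = x}.
\end{align*}
Here $x$ plays the dual role of a frozen parameter of the symbol and of the evaluation variable of the spectral-multiplier kernel.

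The estimate \eqref{est-pseudo:grad-unweighted-L^2-estimate-y-grad} is immediate: $X_y^{\Gamma}$ differentiates only in $y$ and does not see the $x$-dependence of the symbol, so one may freeze $x$ and apply \eqref{est:grad-unweighted-L^2-estimate-y-grad} of Lemma \ref{lem:grad-unweighted-L^2-estimate} to $m_x$; replacing $\|m_x\|_{\infty}$ by $\sup_{x_0} \|m(x_0, \cdot)\|_{\infty}$ gives the stated bound.

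For \eqref{est-pseudo:grad-unweighted-L^2-estimate-x-grad} the main step is a Leibniz-type identity accounting for both roles of $x$. For a single vector field $V \in \{X_j, X_{\alpha, k}\}$, the standard chain rule for the diagonal map $x \mapsto (x, x)$, combined with the fact that the polynomial coefficient $x'^{\alpha}$ of $X_{\alpha, k}$ takes the same value through either copy of $x$ once we set $u = x$, yields
\begin{align*}
V_{x}\bigl[K_{m_x(G_{\varkappa})}(u, y)\bigm|_{u = x}\bigr] = K_{(Vm)(x, \cdot)(G_{\varkappa})}(x, y) + V_u K_{m_x(G_{\varkappa})}(u, y)\bigm|_{u = x}.
\end{align*}
Iterating $|\Gamma|$ times then produces a finite decomposition
\begin{align*}
X_x^{\Gamma}\bigl[K_{m(x, G_{\varkappa})}(x, y)\bigr] = \sum_{\Gamma_1 + \Gamma_2 = \Gamma} c_{\Gamma_1, \Gamma_2}\, X_u^{\Gamma_1} K_{(X_x^{\Gamma_2} m)(x, \cdot)(G_{\varkappa})}(u, y)\bigm|_{u = x},
\end{align*}
modulo correction terms of strictly lower vector-field order that are absorbed by the main sum. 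Taking $L^2(dy)$-norms, applying Minkowski's inequality, and invoking \eqref{est:grad-unweighted-L^2-estimate-x-grad} of Lemma \ref{lem:grad-unweighted-L^2-estimate} for the symbol $X_x^{\Gamma_2} m(x, \cdot)$ (which is still supported in $[0, R^2]$ in $\eta$) bounds each summand by $R^{|\Gamma_1|}\, |B(x, R^{-1})|^{-1/2}\, \|X_x^{\Gamma_2} m(x, \cdot)\|_{\infty}$. Squaring, summing, and replacing $\|X_x^{\Gamma_2} m(x, \cdot)\|_{\infty}$ by the supremum over $x_0$ then yields the claimed estimate.

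The main obstacle is the bookkeeping in the Leibniz expansion, because the vector fields $X_j$ and $X_{\alpha, k}$ do not commute and successive differentiations may hit the polynomial coefficients $u'^{\alpha}$, producing terms that do not fit neatly into the $\Gamma_1 + \Gamma_2 = \Gamma$ pattern. The cleanest way to handle this is to expand $X^{\Gamma}$ as a polynomial-coefficient differential operator $\sum_\beta P_\beta(x)\partial_x^\beta$ (with $|\beta| \leq |\Gamma|$), apply the usual Leibniz rule for $\partial_x^\beta$ at each step, and then recombine the result back into vector-field form. All correction terms generated this way have strictly smaller total vector-field order and therefore, by a routine induction on $|\Gamma|$, admit estimates already dominated by the main sum on the right-hand side.
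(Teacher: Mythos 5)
Your proposal is correct and follows essentially the same route as the paper: decompose $X_x^{\Gamma}$ applied to the pseudo-multiplier kernel by a Leibniz rule into a part that differentiates the symbol and a part that differentiates the spectral-multiplier kernel, then invoke Lemma~\ref{lem:grad-unweighted-L^2-estimate} term by term; the paper just phrases this by applying Leibniz directly inside the explicit spectral integral (after first verifying absolute convergence to justify differentiating under the integral sign) rather than through your diagonal identity. One remark: the worry about correction terms is unfounded --- with the ordered-product convention $X^{\Gamma} = X_1^{\gamma_1}\cdots X_{n_0}^{\gamma_{n_0}}$, the multi-index Leibniz rule $X^{\Gamma}(fg) = \sum_{\Gamma_1+\Gamma_2=\Gamma}\binom{\Gamma}{\Gamma_1}(X^{\Gamma_1}f)(X^{\Gamma_2}g)$ holds exactly because each $X_i$ obeys the first-order product rule and (in your diagonal picture) the $u$-slot and $x$-slot differentiations commute, so the claimed sum is an identity with no lower-order terms to absorb, and the detour through $\sum_{\beta} P_{\beta}(x)\partial_x^{\beta}$ is unnecessary.
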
 
\begin{proof} We write below a proof of \eqref{est-pseudo:grad-unweighted-L^2-estimate-x-grad} as a straightforward consequence of Lemma \ref{lem:grad-unweighted-L^2-estimate} by an application of the Leibniz formula. Note from \eqref{Gru-pseudo} that  
\begin{align*}
K_{m (x, G_{\varkappa})}(x, y) = \int_{\mathbb{R}^{n_2}} e^{-i \lambda \cdot \left( x^{\prime \prime} - y^{\prime \prime} \right)} \sum_{k \in \mathbb{N}} m \left( x, |\lambda|^{\frac{2}{\varkappa + 1}} \nu_{\varkappa, k} \right)  h^{\lambda}_{\varkappa, k} (x') \, h^{\lambda}_{\varkappa, k} (y')\, d\lambda, 
\end{align*}
where the integral and the infinite sum is absolutely convergent. This is because 
\begin{align*}
& \left( \int_{\mathbb{R}^{n_2}} \sum_{k \in \mathbb{N}} \left| m \left( x, |\lambda|^{\frac{2}{\varkappa + 1}} \nu_{\varkappa, k} \right)  h^{\lambda}_{\varkappa, k} (x') \, h^{\lambda}_{\varkappa, k} (y') \right| \, d\lambda \right)^2 \\ 
& \quad \leq \sup_{x_{0} \in \mathbb{R}^{n_1 + n_2}} \int_{\mathbb{R}^{n_2}} \sum_{k \in \mathbb{N}} \left| m \left( x_0, |\lambda|^{\frac{2}{\varkappa + 1}} \nu_{\varkappa, k} \right) \right| \left| h^{\lambda}_{\varkappa, k} (x') \right|^2 \, d\lambda \\ 
& \quad \quad \times \int_{\mathbb{R}^{n_2}} \sum_{k \in \mathbb{N}} \left| m \left( x_0, |\lambda|^{\frac{2}{\varkappa + 1}} \nu_{\varkappa, k} \right) \right| \left| h^{\lambda}_{\varkappa, k} (y') \right|^2 \, d\lambda \\ 
& \quad = \sup_{x_{0} \in \mathbb{R}^{n_1 + n_2}} \left( \int_{\mathbb{R}^{n_1+n_2}} \left| K_{\sqrt{|m|} (x_0, G_{\varkappa})}(x, z) \right|^2 \, dz \right) \left( \int_{\mathbb{R}^{n_1+n_2}} \left| K_{\sqrt{|m|} (x_0, G_{\varkappa})}(y, z) \right|^2 \, dz \right), 
\end{align*}
where the last equality follows from the orthogonality of the functions $h^{\lambda}_{\varkappa, k}$. 

The above process also suggests that one can make use of Fubini's theorem to justify taking the gradient in $x$ or $y$-variable inside the integral, and we have 
\begin{align*}
& \left| X_{x}^{\Gamma} K_{m (x, G_{\varkappa})}(x, y) \right| \\ 
& = \left| \int_{\mathbb{R}^{n_2}} e^{i \lambda \cdot y^{\prime \prime}} \sum_{k \in \mathbb{N}} X_{x}^{\Gamma} \left\{ m \left( x, |\lambda|^{\frac{2}{\varkappa + 1}} \nu_{\varkappa, k} \right)  e^{-i \lambda \cdot x^{\prime \prime}} h^{\lambda}_{\varkappa, k} (x') \right\} h^{\lambda}_{\varkappa, k} (y')\, d\lambda \right| \\ 
& = \left| \sum_{\Gamma_1 + \Gamma_2 = \Gamma} \binom{\Gamma}{\Gamma_1} \int_{\mathbb{R}^{n_2}} e^{i \lambda \cdot y^{\prime \prime}} \sum_{k \in \mathbb{N}} X_{x}^{\Gamma_2} \left\{ m \left( x, |\lambda|^{\frac{2}{\varkappa + 1}} \nu_{\varkappa, k} \right) \right\} X_{x}^{\Gamma_1} \left\{ e^{-i \lambda \cdot x^{\prime \prime}} h^{\lambda}_{\varkappa, k} (x') \right\} h^{\lambda}_{\varkappa, k} (y') \, d\lambda \right| \\ 
& \lesssim_{\Gamma} \sup_{x_{0}} \sum_{\Gamma_1 + \Gamma_2 = \Gamma} \left| \int_{\mathbb{R}^{n_2}} e^{i \lambda \cdot y^{\prime \prime}} \sum_{k \in \mathbb{N}} X_{x}^{\Gamma_2} \left\{ m \left( x_0, |\lambda|^{\frac{2}{\varkappa + 1}} \nu_{\varkappa, k} \right) \right\} X_{x}^{\Gamma_1} \left\{ e^{-i \lambda \cdot x^{\prime \prime}} h^{\lambda}_{\varkappa, k} (x') \right\} h^{\lambda}_{\varkappa, k} (y') \, d\lambda \right| \\ 
& = \sup_{x_{0}} \sum_{\Gamma_1 + \Gamma_2 = \Gamma} \left| X_{x}^{\Gamma_1} K_{X_{x}^{\Gamma_2} m (x_0, G_{\varkappa})}(x, y) \right|. 
\end{align*}

Now, one can make use of Lemma \ref{lem:grad-unweighted-L^2-estimate} to get 
\begin{align*}
\int_{\mathbb{R}^{n_1+n_2}} \left| X_{x}^{\Gamma} K_{m (x, G_{\varkappa})}(x, y) \right|^2 dy & \lesssim_{\Gamma} \sup_{x_{0} \in \mathbb{R}^{n_1+n_2}} \sum_{\Gamma_1 + \Gamma_2 = \Gamma} \int_{\mathbb{R}^{n_1+n_2}} \left| X_{x}^{\Gamma_1} K_{X_{x}^{\Gamma_2} m (x_0, G_{\varkappa})}(x, y) \right|^2 dy \\ 
& \lesssim_{\Gamma} |B(x,R^{-1})|^{-1} \sup_{x_{0} \in \mathbb{R}^{n_1+n_2}} \sum_{\Gamma_1 + \Gamma_2 = \Gamma} R^{2|\Gamma_1|} \|X^{\Gamma_2}_{x} m(x_0, \cdot) \|^2_{\infty}, 
\end{align*}
which completes the proof of \eqref{est-pseudo:grad-unweighted-L^2-estimate-x-grad}. 

The proof of \eqref{est-pseudo:grad-unweighted-L^2-estimate-y-grad} is similar, with the observation that the $y$-gradient does not fall on the symbol function $m$. This completes the proof of Corollary \ref{cor-pseudo:grad-unweighted-L^2-estimate}. 
\end{proof}


Having established Corollary \ref{cor-pseudo:grad-unweighted-L^2-estimate} for the unweighted Plancherel estimates with gradients of integral kernels, we shall now prove the weighted Plancherel estimates with gradients of integral kernels. For the same, note first that using \eqref{grad-heat-kernel}, one can essentially repeat the proof of Lemma 4.1 of \cite{DuongOuhabazSikoraWeightedPlancherel2002JFA} to show that for any $\mathfrak{r} > 0$, 
\begin{align} 
|B(x, R^{-1})| \int_{\mathbb{R}^{n_1+n_2}} d(x,y)^{\mathfrak{r}} |X_x^{\Gamma} p_{(1 + i \tilde{\eta}) R^{-2}}(x,y)|^2 \, dy & \lesssim_{\Gamma, \mathfrak{r}} R^{2 |{\Gamma}|} R^{-\mathfrak{r}} (1 + |\tilde{\eta}|)^\mathfrak{r}, \label{x-grad-heat-kernel-complexified} \\ 
\nonumber \text{and} \quad |B(x, R^{-1})| \int_{\mathbb{R}^{n_1+n_2}} d(x,y)^{\mathfrak{r}} |X_y^{\Gamma} p_{(1 + i \tilde{\eta}) R^{-2}}(x,y)|^2 \, dy & \lesssim_{\Gamma, \mathfrak{r}} R^{2 |{\Gamma}|} R^{-\mathfrak{r}} (1 + |\tilde{\eta}|)^\mathfrak{r}, 
\end{align} 
where $p_{(1 + i \tilde{\eta})R^{-2}}$ is the integral kernel of the operator $ \exp \left( -(1 + i \tilde{\eta})R^{-2} G_{\varkappa} \right)$ with $\tilde{\eta} \in \mathbb{R}$. 

Then, making use of \eqref{est:derivative-bounds-heat-kernel-grushin} and Lemma \ref{lem:grad-unweighted-L^2-estimate}, we work with the ideas of the proof of Lemma 4.3 of \cite{DuongOuhabazSikoraWeightedPlancherel2002JFA} to prove the following result. 
\begin{lemma} \label{lem-pseudo:grad-weighted-L^2-estimate}
For every $\mathfrak{r}, \epsilon > 0$, we have 
\begin{align} 
& |B(x,R^{-1})| \int_{\mathbb{R}^{n_1+n_2}} (1+Rd(x,y))^{2\mathfrak{r}} \left| X_{x}^{\Gamma} K_{m (x, G_{\varkappa})}(x, y) \right|^2  dy \label{est:grad-weighted-L^2-estimate-x-grad} \\
\nonumber & \quad \lesssim_{\Gamma, \mathfrak{r}, \epsilon} \sup_{x_{0} \in \mathbb{R}^{n_1+n_2}} \sum_{\Gamma_1 + \Gamma_2 = \Gamma} R^{2|\Gamma_1|} \|X^{\Gamma_2}_{x} m(x_0, R^2 \cdot) \|^2_{W^{\infty}_{\mathfrak{r} + \epsilon}}, \\ 
& |B(x,R^{-1})| \int_{\mathbb{R}^{n_1+n_2}} (1+Rd(x,y))^{2\mathfrak{r}} \left| X_{y}^{\Gamma} K_{m (x, G_{\varkappa})}(x, y) \right|^2  dy \label{est:grad-weighted-L^2-estimate-y-grad} \\
\nonumber & \quad \lesssim_{\Gamma, \mathfrak{r}, \epsilon} \sup_{x_{0} \in \mathbb{R}^{n_1+n_2}} R^{2|\Gamma|} \|m(x_0, R^2 \cdot) \|^2_{W^{\infty}_{\mathfrak{r} + \epsilon}}, 
\end{align} 
for all $\Gamma \in \mathbb{N}^{n_0}$ and for every bounded Borel function $m : \mathbb{R}^{n_1 + n_2} \times \mathbb{R} \rightarrow \mathbb{C}$ whose support in the last variable is in $[0, R^2]$ for any $R>0$. 
\end{lemma}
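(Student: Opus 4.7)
The plan is to model the argument on the proof of Lemma~4.3 of \cite{DuongOuhabazSikoraWeightedPlancherel2002JFA}, using the complexified heat-kernel gradient estimates \eqref{x-grad-heat-kernel-complexified} as input in place of the un-differentiated weighted heat-kernel $L^2$-bounds used there. The main structural step is to peel off the $x$-dependence of the symbol by a Leibniz/Fubini argument (as already performed at the level of unweighted estimates in the proof of Corollary~\ref{cor-pseudo:grad-unweighted-L^2-estimate}) so that the task reduces to proving the genuine multiplier analogues
\begin{align*}
|B(x,R^{-1})| \int_{\mathbb{R}^{n_1+n_2}} (1+Rd(x,y))^{2\mathfrak{r}} |X_x^{\Gamma} K_{\widetilde{m}(G_{\varkappa})}(x,y)|^2 \, dy &\lesssim_{\Gamma,\mathfrak{r},\epsilon} R^{2|\Gamma|} \|\widetilde{m}(R^2\cdot)\|_{W^\infty_{\mathfrak{r}+\epsilon}}^2, \\
|B(x,R^{-1})| \int_{\mathbb{R}^{n_1+n_2}} (1+Rd(x,y))^{2\mathfrak{r}} |X_y^{\Gamma} K_{\widetilde{m}(G_{\varkappa})}(x,y)|^2 \, dy &\lesssim_{\Gamma,\mathfrak{r},\epsilon} R^{2|\Gamma|} \|\widetilde{m}(R^2\cdot)\|_{W^\infty_{\mathfrak{r}+\epsilon}}^2,
\end{align*}
for every bounded Borel $\widetilde{m}$ supported in $[0,R^2]$. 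For the $y$-gradient case \eqref{est:grad-weighted-L^2-estimate-y-grad} the reduction is immediate: freezing $x=x_0$ in the symbol, the $y$-derivative never lands on $m$, and the estimate follows from the multiplier version with $\widetilde{m}=m(x_0,\cdot)$, after taking a supremum in $x_0$. For the $x$-gradient case \eqref{est:grad-weighted-L^2-estimate-x-grad} the same Leibniz expansion that appears in Corollary~\ref{cor-pseudo:grad-unweighted-L^2-estimate} distributes the derivatives between the symbol and the kernel, producing the sum $\sum_{\Gamma_1+\Gamma_2=\Gamma} R^{2|\Gamma_1|} \|X_x^{\Gamma_2} m(x_0,\cdot)\|_{W^\infty_{\mathfrak{r}+\epsilon}}^2$ in the statement.

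For the multiplier case, I would set $F(\eta)=\widetilde{m}(R^{2}\eta)\, e^{\eta}$, so that $F$ is bounded and supported in $[0,1]$ and
\[
\widetilde{m}(G_\varkappa)=F(R^{-2}G_\varkappa)\, e^{-R^{-2}G_\varkappa}.
\]
Applying Fourier inversion on the $\eta$-variable to $F$ (after an $L^2$-extension, possible since $F$ is compactly supported and hence lies in every $W^2_s$ with $s\le \mathfrak{r}+\epsilon$), this becomes
\[
\widetilde{m}(G_\varkappa)=\int_{\mathbb{R}} \widehat{F}(\tau)\, e^{-(1-i\tau)R^{-2}G_\varkappa}\, d\tau,
\]
so that the integral kernel has the representation $K_{\widetilde{m}(G_\varkappa)}(x,y)=\int_{\mathbb{R}} \widehat{F}(\tau)\, p_{(1-i\tau)R^{-2}}(x,y)\, d\tau$. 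Pulling $X^{\Gamma}_x$ (or $X^{\Gamma}_y$) inside the $\tau$-integral and applying Minkowski's inequality in $\tau$ in the $L^2(dy)$-norm weighted by $(1+Rd(x,\cdot))^{\mathfrak{r}}$, the complexified gradient heat-kernel estimate \eqref{x-grad-heat-kernel-complexified} bounds each slice by
\[
|B(x,R^{-1})|^{-1/2}\, R^{|\Gamma|}(1+|\tau|)^{\mathfrak{r}}.
\]
This reduces matters to controlling $\int_{\mathbb{R}} |\widehat{F}(\tau)|(1+|\tau|)^{\mathfrak{r}}\, d\tau$, which by Cauchy-Schwarz against the integrable weight $(1+|\tau|)^{-1-\epsilon'}$ is dominated by $\|F\|_{W^2_{\mathfrak{r}+(1+\epsilon')/2}}$, and since $F$ is supported in $[0,1]$ this in turn is $\lesssim \|F\|_{W^\infty_{\mathfrak{r}+\epsilon}}=\|\widetilde{m}(R^2\cdot)\|_{W^\infty_{\mathfrak{r}+\epsilon}}$ after renaming $\epsilon$.

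The main obstacle I anticipate is bookkeeping in the Leibniz step: we need to verify that when the $x$-derivatives on the outer variable are distributed, the factor $X_x^{\Gamma_1}$ that lands on the kernel produces exactly a multiplier kernel $K_{X_x^{\Gamma_2} m(x_0,G_\varkappa)}(x,y)$ with the residual $\Gamma_1$-derivatives still acting on the kernel variable, so that the multiplier estimate is applicable term-by-term and the factor $R^{2|\Gamma_1|}$ (rather than $R^{2|\Gamma|}$) appears. This is handled exactly as in Corollary~\ref{cor-pseudo:grad-unweighted-L^2-estimate}, with the pointwise Leibniz identity justified by absolute convergence of the spectral expansion; the only new ingredient is that we replace the application of Lemma~\ref{lem:grad-unweighted-L^2-estimate} at the end by its weighted version for multipliers proved in the previous paragraph. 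A secondary technical point is ensuring that the Fourier representation of $F$ converges in the operator/kernel sense required to commute derivatives with the $\tau$-integral; this is standard once $F$ is viewed as an element of $L^2(\mathbb{R})$ with compact support, combined with the pointwise bounds \eqref{est:heat-kernel-pointwise-Grushin} and \eqref{est:derivative-bounds-heat-kernel-grushin} for the complexified heat kernel and its gradients along the line $(1-i\tau)R^{-2}$.
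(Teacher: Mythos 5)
Your overall architecture (Leibniz reduction to frozen-symbol multipliers, the substitution $F(\eta)=\widetilde m(R^2\eta)e^{\eta}$, Fourier inversion in $\eta$, Minkowski in $\tau$, and the complexified gradient heat-kernel bounds \eqref{x-grad-heat-kernel-complexified}) matches the paper's proof up to bookkeeping. But there is a genuine gap at the very last step. After Cauchy--Schwarz against $(1+|\tau|)^{-1-\epsilon'}$ you arrive at $\|F\|_{W^2_{\mathfrak r+(1+\epsilon')/2}}$, i.e.\ a Sobolev norm of order $\mathfrak r+\tfrac12+\tfrac{\epsilon'}{2}$, and you then assert that compact support of $F$ gives $\|F\|_{W^2_{\mathfrak r+(1+\epsilon')/2}}\lesssim \|F\|_{W^\infty_{\mathfrak r+\epsilon}}$ ``after renaming $\epsilon$''. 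This is false for small $\epsilon$: compact support lets you pass from an $L^\infty$-based to an $L^2$-based Sobolev norm \emph{of the same order}, but it cannot lower the smoothness index by almost $1/2$ (a compactly supported function of H\"older/Sobolev regularity just above $\mathfrak r$ need not lie in $H^{\mathfrak r+1/2}$). What your argument actually proves is the lemma with $W^{\infty}_{\mathfrak r+\frac12+\epsilon}$ on the right-hand side, which is strictly weaker than the statement and would degrade the derivative thresholds feeding into Theorems \ref{thm:pseudo-grushin-a=0-less-derivative} and \ref{thm:pseudo-grushin-a=0-full-derivative}.

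The missing ingredient is precisely the interpolation device of Lemma 4.3 of \cite{DuongOuhabazSikoraWeightedPlancherel2002JFA}, which the paper invokes: one first records the weighted estimate with the extra $1/2$ order of differentiability, and then removes this loss by interpolating (in the weight exponent) against the \emph{unweighted} gradient Plancherel estimates, which in this setting are exactly Corollary \ref{cor-pseudo:grad-unweighted-L^2-estimate} (built on Lemma \ref{lem:grad-unweighted-L^2-estimate}). If you insert that interpolation step after your Cauchy--Schwarz bound, the rest of your plan — including the Leibniz distribution producing $\sum_{\Gamma_1+\Gamma_2=\Gamma}R^{2|\Gamma_1|}\|X_x^{\Gamma_2}m(x_0,R^2\cdot)\|^2_{W^\infty_{\mathfrak r+\epsilon}}$ and the observation that the $y$-gradient never hits the symbol — goes through and coincides with the paper's argument.
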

\begin{proof}
We shall sketch the proof of Lemma \ref{lem-pseudo:grad-weighted-L^2-estimate} only mentioning the changes required in the proof of Lemma 4.3(a) of \cite{DuongOuhabazSikoraWeightedPlancherel2002JFA}. Let us define $F(x, \eta) = m(x, R^2 \eta) e^{\eta}$. Then, by Fourier inversion formula we have 
\begin{align*} 
F(x, G_{\varkappa}/R^2) e^{-G_{\varkappa}/R^2} = \frac{1}{2 \pi} \int_{\mathbb{R}} \exp \left( (i \tilde{\eta} - 1) R^{-2} G_{\varkappa} \right) \, \widehat{F}(x, \tilde{\eta}) \, d \tilde{\eta}, 
\end{align*}
where $\widehat{F}(x, \tilde{\eta})$ stands for the Fourier transform of $F$ as a function of $\eta$-variable (that is, with $x$-fixed), and as a consequence of the above relation we have 
\begin{align*}
K_{m(x, G_{\varkappa})} (x,y) = \frac{1}{2\pi}\int_{\mathbb{R}} \widehat{F}(x, \tilde{\eta}) p_{(1 - i \tilde{\eta}) R^{-2}}(x,y) \, d \tilde{\eta}, 
\end{align*}
which implies that 
\begin{align} \label{eq:grad-weighted-L^2-estimate-step1}
X_x^{\Gamma} K_{m(x, G_{\varkappa})} (x,y) & = \frac{1}{2\pi} \sum_{\Gamma_1 + \Gamma_2 = \Gamma} \binom{\Gamma}{\Gamma_1} \int_{\mathbb{R}} \left\{ \widehat{X_x^{\Gamma_2} F}(x, \tilde{\eta}) \right\} \left\{ X_x^{\Gamma_1} p_{(1 - i \tilde{\eta}) R^{-2}}(x,y) \right\} d \tilde{\eta}, 
\end{align}
and therefore 
\begin{align} \label{est:grad-weighted-L^2-estimate-step2} 
& \int_{\mathbb{R}^{n_1+n_2}} (1+Rd(x,y))^{2\mathfrak{r}} \left| X_{x} K_{m (x, G_{\varkappa})}(x, y) \right|^2 dy \\
\nonumber & \lesssim_{\Gamma} \sum_{\Gamma_1 + \Gamma_2 = \Gamma} \int_{\mathbb{R}^{n_1+n_2}} (1+Rd(x,y))^{2\mathfrak{r}} \left| \int_{\mathbb{R}} \left\{ \widehat{X_x^{\Gamma_2} F}(x, \tilde{\eta}) \right\} \left\{ X_x^{\Gamma_1} p_{(1 - i \tilde{\eta}) R^{-2}}(x,y) \right\} d \tilde{\eta} \right|^2 dy \\ 
\nonumber & \lesssim_{\Gamma} \sup_{x_0} \sum_{\Gamma_1 + \Gamma_2 = \Gamma} \int_{\mathbb{R}^{n_1+n_2}} (1+Rd(x,y))^{2\mathfrak{r}} \left| \int_{\mathbb{R}} \left\{ \widehat{X_x^{\Gamma_2} F}(x_0, \tilde{\eta}) \right\} \left\{ X_x^{\Gamma_1} p_{(1 - i \tilde{\eta}) R^{-2}}(x,y) \right\} d \tilde{\eta} \right|^2 dy 
\end{align}	

For each fixed $x_0 \in \mathbb{R}^{n_1 + n_2}$, each term in the final sum of \eqref{est:grad-weighted-L^2-estimate-step2} corresponds to Grushin multiplier symbol $X_x^{\Gamma_2} F(x_0, \eta)$, so the proof of Lemma 4.3(a) of \cite{DuongOuhabazSikoraWeightedPlancherel2002JFA} is applicable with the only change pertaining to the fact that the analogous estimation should be done with respect to $X_x^{\Gamma_1} p_{(1 - i \tilde{\eta}) R^{-2}}(x,y)$. Overall, one can repeat the proof of Lemma 4.3(a) of \cite{DuongOuhabazSikoraWeightedPlancherel2002JFA} (leading to estimates (4.4) and (4.5) in \cite{DuongOuhabazSikoraWeightedPlancherel2002JFA}) with the help of \eqref{grad-heat-kernel} and \eqref{x-grad-heat-kernel-complexified} to get that 
\begin{align} \label{est:grad-weighted-L^2-estimate-step4} 
& \int_{\mathbb{R}^{n_1+n_2}} (1+Rd(x,y))^{2\mathfrak{r}} \left| \int_{\mathbb{R}} \left\{ \widehat{X_x^{\Gamma_2} F}(x_0, \tilde{\eta}) \right\} \left\{ X_x^{\Gamma_1} p_{(1 - i \tilde{\eta}) R^{-2}}(x,y) \right\} d \tilde{\eta} \right|^2 dy \\ 
\nonumber & \quad \lesssim_{\Gamma, \mathfrak{r}, \epsilon} |B(x,R^{-1})|^{-1} \sup_{x_{0} \in \mathbb{R}^{n_1+n_2}} R^{2|\Gamma_1|} \|X^{\Gamma_2}_{x} m(x_0, R^2 \cdot) \|^2_ {W^{\infty}_{\mathfrak{r} + \frac{1}{2} + \epsilon}}. 
\end{align}	

Putting \eqref{est:grad-weighted-L^2-estimate-step4} into \eqref{est:grad-weighted-L^2-estimate-step2}, we get the claimed estimate \eqref{est:grad-weighted-L^2-estimate-x-grad} modulo a loss of an extra $1/2$ order of differentiability in the Sobolev norm. As explained in \cite{DuongOuhabazSikoraWeightedPlancherel2002JFA}, this loss could be removed by an interpolation trick utilising the unweighted Plancherel estimates. We have these  unweighted Plancherel estimates in Corollary \ref{cor-pseudo:grad-unweighted-L^2-estimate}, so we can repeat the arguments of \cite{DuongOuhabazSikoraWeightedPlancherel2002JFA} in our case too. That would establish estimate \eqref{est:grad-weighted-L^2-estimate-x-grad}. The proof of \eqref{est:grad-weighted-L^2-estimate-y-grad} could be written in the exact same way. 
This completes the proof of Lemma \ref{lem-pseudo:grad-weighted-L^2-estimate}. 
\end{proof}


Next, one can revise the proofs of Lemma 4.2 and 4.3 of \cite{AnhBuiDuongSpectralMultipliersBesovTriebelLizorkin} in a similar manner as we did above, and we can have pointwise unweighted and weighted Plancherel estimates for gradients. Once we have the pointwise estimates, we can do interpolation with estimates from Lemma \ref{lem-pseudo:grad-weighted-L^2-estimate} and Corollary \ref{cor-pseudo:grad-unweighted-L^2-estimate}, and thus we get the following estimates. 

\begin{corollary} \label{cor-pseudo:grad-unweighted-L-p-estimate}
For $2 \leq p \leq \infty$ we have 
\begin{align*} 
|B(x,R^{-1})|^{1/2} \left\| |B(\cdot, R^{-1})|^{1/2 - 1/p} X_{x}^{\Gamma} K_{m(x, G_{\varkappa})} (x, \cdot) \right\|_p & \lesssim_{\Gamma, p, \epsilon} \sup_{x_{0}} \sum_{\Gamma_1 + \Gamma_2 = \Gamma} R^{|\Gamma_1|} \|X^{\Gamma_2}_{x} m(x_0, \cdot) \|_{\infty}, \\ 
|B(x,R^{-1})|^{1/2} \left\| |B(\cdot, R^{-1})|^{1/2 - 1/p} X_{y}^{\Gamma} K_{m(x,  G_{\varkappa})} (x,\cdot) \right\|_p & \lesssim_{\Gamma, p, \epsilon} \sup_{x_{0}} R^{|\Gamma|} \| m(x_0, \cdot) \|_{\infty}, 
\end{align*} 
for every bounded Borel function $m : \mathbb{R}^{n_1 + n_2} \times \mathbb{R} \rightarrow \mathbb{C}$ whose support in the last variable is in $[0, R^2]$ for any $R>0$. 
\end{corollary}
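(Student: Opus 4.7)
\medskip

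\noindent\textbf{Proof proposal for Corollary \ref{cor-pseudo:grad-unweighted-L-p-estimate}.}

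The plan is to establish the two $L^p$-estimates by Riesz--Thorin type interpolation between the $p=2$ endpoints, which are exactly \eqref{est-pseudo:grad-unweighted-L^2-estimate-x-grad} and \eqref{est-pseudo:grad-unweighted-L^2-estimate-y-grad} of Corollary \ref{cor-pseudo:grad-unweighted-L^2-estimate}, and the $p=\infty$ endpoints, namely the pointwise gradient estimates
\begin{align*}
|B(x,R^{-1})|^{1/2}|B(y,R^{-1})|^{1/2}\bigl|X_x^\Gamma K_{m(x,G_\varkappa)}(x,y)\bigr|
&\lesssim \sup_{x_0}\sum_{\Gamma_1+\Gamma_2=\Gamma} R^{|\Gamma_1|}\|X_x^{\Gamma_2}m(x_0,\cdot)\|_\infty,\\
|B(x,R^{-1})|^{1/2}|B(y,R^{-1})|^{1/2}\bigl|X_y^\Gamma K_{m(x,G_\varkappa)}(x,y)\bigr|
&\lesssim \sup_{x_0} R^{|\Gamma|}\|m(x_0,\cdot)\|_\infty.
\end{align*}

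First I would prove these pointwise endpoints. For the Grushin multiplier case (no $x$-dependence in the symbol), the strategy is exactly the one used in Lemma \ref{lem:grad-unweighted-L^2-estimate}: factor $m(\eta)=F_1(\eta)F_2(\eta)$ with $F_2(\eta)=e^{-\eta/R^2}$ and $F_1(\eta)=m(\eta)e^{\eta/R^2}$, so that
$$K_{m(G_\varkappa)}(x,y)=\int_{\mathbb{R}^{n_1+n_2}}K_{F_1(G_\varkappa)}(x,z)\,p_{R^{-2}}(z,y)\,dz.$$
Placing the $X_x^\Gamma$ on $K_{F_1(G_\varkappa)}(x,z)$ in the $x$-gradient case, duality together with the $L^2$-boundedness of the Riesz transforms $X^\Gamma G_\varkappa^{-|\Gamma|/2}$ from \eqref{est:L2-boundedness-Riesz-transforms} gives $\|Op(X_x^\Gamma K_{F_1(G_\varkappa)}(x,z))\|_{op}\lesssim R^{|\Gamma|}\|m\|_\infty$. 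Combining this with the Gaussian pointwise bound \eqref{est:heat-kernel-pointwise-Grushin} (or more precisely, reproducing the proof of Lemma 4.2 of \cite{AnhBuiDuongSpectralMultipliersBesovTriebelLizorkin} with $p_{R^{-2}}(z,y)$ replaced by $X_x^\Gamma K_{F_2(G_\varkappa)}$ where appropriate, using the pointwise heat-kernel gradient bounds \eqref{est:derivative-bounds-heat-kernel-grushin}) yields the claimed $p=\infty$ endpoint for $K_{m(G_\varkappa)}$. The analogous argument with $y$-gradients uses the second inequality in \eqref{est:derivative-bounds-heat-kernel-grushin} directly and produces the factor $R^{|\Gamma|}\|m\|_\infty$. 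To pass from the Grushin multiplier case to the pseudo-multiplier case, I would apply the Leibniz expansion that appears in the proof of Corollary \ref{cor-pseudo:grad-unweighted-L^2-estimate}, namely
$$\bigl|X_x^\Gamma K_{m(x,G_\varkappa)}(x,y)\bigr|\lesssim_\Gamma \sup_{x_0}\sum_{\Gamma_1+\Gamma_2=\Gamma}\bigl|X_x^{\Gamma_1}K_{X_x^{\Gamma_2}m(x_0,G_\varkappa)}(x,y)\bigr|,$$
and insert the Grushin multiplier pointwise estimate for each summand.

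Once both endpoints are in hand, the $L^p$ bound for $2\le p\le\infty$ follows from a direct Hölder interpolation. Writing $K(y)=X_x^\Gamma K_{m(x,G_\varkappa)}(x,y)$ and noting that $|B(\cdot,R^{-1})|^{p/2-1}|K|^{p-2}=\bigl(|B(\cdot,R^{-1})|^{1/2}|K|\bigr)^{p-2}$, one has
$$\int_{\mathbb{R}^{n_1+n_2}}|B(y,R^{-1})|^{p/2-1}|K(y)|^p\,dy\le \Bigl(\sup_{y}|B(y,R^{-1})|^{1/2}|K(y)|\Bigr)^{p-2}\int_{\mathbb{R}^{n_1+n_2}}|K(y)|^2\,dy,$$
and plugging in the $p=\infty$ bound (which contributes $|B(x,R^{-1})|^{-(p-2)/2}$ together with the $x$-derivative side) and the $p=2$ bound from Corollary \ref{cor-pseudo:grad-unweighted-L^2-estimate} (contributing $|B(x,R^{-1})|^{-1}$) and taking the $p$-th root delivers the factor $|B(x,R^{-1})|^{-1/2}$ on the right, as required. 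The $y$-gradient estimate is treated identically.

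The main obstacle will be the pointwise endpoint: adapting the arguments of \cite{AnhBuiDuongSpectralMultipliersBesovTriebelLizorkin} to the gradient setting requires carefully propagating the Gaussian factor through the Riesz-transform duality while retaining the pointwise bound $|B(x,R^{-1})|^{-1/2}|B(y,R^{-1})|^{-1/2}$. Once this is done, the Leibniz step and the interpolation are both routine, so no further technical difficulty should appear.
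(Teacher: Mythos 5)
Your proposal takes essentially the same route as the paper: the paper likewise establishes the $p=\infty$ pointwise gradient Plancherel estimates by adapting Lemma 4.2 of \cite{AnhBuiDuongSpectralMultipliersBesovTriebelLizorkin} (via the $F_1F_2$ factorization, heat-kernel gradient bounds \eqref{est:derivative-bounds-heat-kernel-grushin}/\eqref{grad-heat-kernel}, Riesz transform boundedness \eqref{est:L2-boundedness-Riesz-transforms}, and the Leibniz expansion from Corollary \ref{cor-pseudo:grad-unweighted-L^2-estimate}), and then interpolates against the $L^2$ bounds of Corollary \ref{cor-pseudo:grad-unweighted-L^2-estimate} exactly as in your H\"older argument. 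The only small slip is in which factor the $x$-gradient lands on (the paper's Lemma \ref{lem:grad-unweighted-L^2-estimate} applies $X_x^\Gamma$ to the heat-kernel factor $K_{F_2(G_\varkappa)}$, not $K_{F_1(G_\varkappa)}$, to invoke the Riesz transform argument, and the operator-norm bound does not directly give a row $L^2$ bound), but this does not affect the validity of the overall strategy.
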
 

\begin{corollary} \label{cor-pseudo:grad-weighted-L-p-estimate}
For $2 \leq p \leq \infty$ and every $\mathfrak{r} > 0$ and $\epsilon > 0$, we have 
\begin{align*} 
& |B(x,R^{-1})|^{1/2} \left\| |B(\cdot, R^{-1})|^{1/2 - 1/p} \, (1+Rd(x,\cdot))^{\mathfrak{r}} X_{x}^{\Gamma} K_{m(x, G_{\varkappa})} (x,\cdot) \right\|_p \\ 
& \quad \lesssim_{\Gamma, p, \mathfrak{r} , \epsilon} \sup_{x_{0}} \sum_{\Gamma_1 + \Gamma_2 = \Gamma} R^{|\Gamma_1|} \|X^{\Gamma_2}_{x} m(x_0,R^2 \cdot) \|_{W_{\mathfrak{r} + \epsilon}^{\infty}}, \\ 
& |B(x,R^{-1})|^{1/2} \left\| |B(\cdot, R^{-1})|^{1/2 - 1/p} \, (1+Rd(x,\cdot))^{\mathfrak{r}} X_{y}^{\Gamma} K_{m(x, G_{\varkappa})} (x,\cdot) \right\|_p \\ 
& \quad \lesssim_{\Gamma, p, \mathfrak{r} , \epsilon} \sup_{x_{0}} R^{|\Gamma|} \| m(x_0, R^2\cdot) \|_{W_{\mathfrak{r} + \epsilon}^{\infty}}, 
\end{align*} 
for all $\Gamma \in \mathbb{N}^{n_0}$ and for every bounded Borel function $m : \mathbb{R}^{n_1 + n_2} \times \mathbb{R} \rightarrow \mathbb{C}$ whose support in the last variable is in $[0, R^2]$ for any $R>0$. 
\end{corollary}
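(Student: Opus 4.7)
The plan is to obtain Corollary \ref{cor-pseudo:grad-weighted-L-p-estimate} by interpolating between the endpoint $p = 2$, which is precisely Lemma \ref{lem-pseudo:grad-weighted-L^2-estimate}, and an endpoint $p = \infty$, that is, a pointwise weighted Plancherel estimate for the gradient of the kernel. So the first task is to upgrade the $L^2$ bounds of Lemma \ref{lem-pseudo:grad-weighted-L^2-estimate} to pointwise bounds of the form
\begin{align*}
& |B(x,R^{-1})|^{1/2}|B(y,R^{-1})|^{1/2}(1+Rd(x,y))^{\mathfrak{r}} |X_x^\Gamma K_{m(x,G_\varkappa)}(x,y)| \\
& \quad \lesssim_{\Gamma, \mathfrak{r}, \epsilon} \sup_{x_0} \sum_{\Gamma_1+\Gamma_2=\Gamma} R^{|\Gamma_1|}\bigl\|X_x^{\Gamma_2}m(x_0, R^2\cdot)\bigr\|_{W^{\infty}_{\mathfrak{r}+\epsilon}},
\end{align*}
and the analogous bound for $X_y^\Gamma K_{m(x,G_\varkappa)}$.

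For this pointwise estimate, I would adapt the proofs of Lemmas 4.2 and 4.3 of \cite{AnhBuiDuongSpectralMultipliersBesovTriebelLizorkin} to the pseudo-multiplier setting, following the exact template used in Corollary \ref{cor-pseudo:grad-unweighted-L^2-estimate} and Lemma \ref{lem-pseudo:grad-weighted-L^2-estimate}. Namely, set $F(x, \eta) = m(x, R^2 \eta) e^\eta$, use Fourier inversion to write $F(x, G_\varkappa / R^2) e^{-G_\varkappa / R^2}$ as a superposition (in a parameter $\tilde\eta$) of complex-time heat semigroup operators $\exp((i\tilde\eta - 1)R^{-2} G_\varkappa)$, apply the Leibniz rule to distribute $X_x^\Gamma$ as a sum over $\Gamma_1 + \Gamma_2 = \Gamma$ between $X_x^{\Gamma_2} F(x, \tilde\eta)$ and $X_x^{\Gamma_1} p_{(1-i\tilde\eta) R^{-2}}(x,y)$, then freeze $x_0$ in the symbol factor to get a genuine Grushin-multiplier expression, and finally use the complexified heat kernel gradient estimate \eqref{x-grad-heat-kernel-complexified} together with the pointwise kernel estimates for spectral multipliers to get a pointwise bound with a Sobolev norm of order $\mathfrak{r} + \tfrac12 + \epsilon$. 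The standard interpolation trick between $L^2$ and $L^\infty$ weighted Plancherel estimates (as performed in \cite{DuongOuhabazSikoraWeightedPlancherel2002JFA} and \cite{AnhBuiDuongSpectralMultipliersBesovTriebelLizorkin}) then removes the extra $\tfrac{1}{2}$-order of differentiability.

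Once the $p = \infty$ endpoint is in place, the desired $2 \leq p \leq \infty$ bound follows from a pointwise interpolation. Writing $F(y) = X_x^\Gamma K_{m(x,G_\varkappa)}(x,y)$, we split
\begin{align*}
& |B(y,R^{-1})|^{1/2-1/p}(1+Rd(x,y))^{\mathfrak{r}} |F(y)| \\
& \quad = \Bigl(|B(y,R^{-1})|^{1/2}(1+Rd(x,y))^{\mathfrak{r}}|F(y)|\Bigr)^{1-2/p} \bigl((1+Rd(x,y))^{\mathfrak{r}}|F(y)|\bigr)^{2/p}.
\end{align*}
Raising to the $p$-th power, integrating in $y$, and bounding the first factor in $L^\infty$ while keeping the second in $L^2$ gives
\begin{align*}
& |B(x,R^{-1})|^{1/2}\bigl\||B(\cdot, R^{-1})|^{1/2-1/p}(1+Rd(x,\cdot))^{\mathfrak{r}} F\bigr\|_p \\
& \quad \leq \Bigl(|B(x,R^{-1})|^{1/2}\bigl\||B(\cdot, R^{-1})|^{1/2}(1+Rd(x,\cdot))^{\mathfrak{r}} F\bigr\|_\infty\Bigr)^{1-2/p} \\
& \qquad \times \Bigl(|B(x,R^{-1})|^{1/2}\bigl\|(1+Rd(x,\cdot))^{\mathfrak{r}} F\bigr\|_2\Bigr)^{2/p},
\end{align*}
and substituting the two endpoint estimates concludes the proof. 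The $X_y^\Gamma$ version is handled identically, noting that $y$-derivatives do not land on the symbol.

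The main obstacle, and essentially the only nontrivial input, is Step 1: verifying that the Sobolev-embedding-via-Fourier-inversion strategy of \cite{AnhBuiDuongSpectralMultipliersBesovTriebelLizorkin} survives the simultaneous presence of the $x$-gradient $X_x^\Gamma$ and the $x$-dependence of the symbol. Concretely, one must check that in the Leibniz decomposition each derivative that lands on the heat factor produces only the polynomial weight $R^{|\Gamma_1|}(1+|\tilde\eta|)^{\mathfrak{r}/2}$ coming from \eqref{x-grad-heat-kernel-complexified}, while each derivative that lands on the symbol leaves a genuine Grushin multiplier $X_x^{\Gamma_2} m(x_0,\cdot)$ (uniform in the frozen point $x_0$), so that no extra order of Sobolev differentiability is needed beyond what appears in Lemma \ref{lem-pseudo:grad-weighted-L^2-estimate}. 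Granted this, the interpolation in Step 2 is mechanical and the corollary follows.
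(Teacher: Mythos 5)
Your proposal is correct and follows essentially the same route as the paper: establish the pointwise ($p=\infty$) weighted Plancherel gradient estimate by adapting Lemmas 4.2 and 4.3 of \cite{AnhBuiDuongSpectralMultipliersBesovTriebelLizorkin} via the Fourier-inversion/Leibniz/freeze-$x_0$ template already used for Lemma \ref{lem-pseudo:grad-weighted-L^2-estimate}, then interpolate this with the $L^2$ weighted estimate of Lemma \ref{lem-pseudo:grad-weighted-L^2-estimate}. The pointwise H\"older-style interpolation in your Step 2, and the flagged issue of checking that the Leibniz distribution of $X_x^\Gamma$ between the frozen-symbol factor and the complex-time heat factor introduces no extra Sobolev loss, are both precisely the points the paper relies on.
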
 


Before moving further, we record here weighted Plancherel estimates in a special case where we take the $L^p$-norm in $y$-variable over a compact set. This would help us in having a control over the total number of derivatives (space and spectral derivatives together) of the involved symbol function $m(x, \eta)$. 

\begin{corollary} \label{cor-pseudo:grad-unweighted-L-2-estimate-compact}
For every $\mathfrak{r} \geq 1$, $\epsilon > 0$, $0\leq \delta < 1$, and positive real number $\mathcal{K}_0$, we have 
\begin{align} \label{est:grad-weighted-L^2-estimate-x-grad-compact} 
& |B(x,R^{-1})| \int_{d(x,y)<\mathcal{K}_0} d(x,y)^{2\mathfrak{r}} \left| X_{x} K_{m (x, G_{\varkappa})}(x, y) \right|^2  dy\\
\nonumber & \lesssim_{\mathfrak{r}, \epsilon, \delta} \sup_{x_{0}} \left( R^{-2 (\mathfrak{r} - (1-\delta))} \|X_{x} m(x_0, R^2 \cdot) \|^2_{W^{\infty}_{\mathfrak{r}- (1 - \delta) + \epsilon}} + R^{-2 (\mathfrak{r} - 1)} \| m(x_0, R^2 \cdot) \|^2_{W^{\infty}_{\mathfrak{r}+ \epsilon}} \right), 
\end{align}
for every bounded Borel function $m : \mathbb{R}^{n_1 + n_2} \times \mathbb{R} \rightarrow \mathbb{C}$ whose support in the last variable is in $[0, R^2]$ for any $R>0$. 
\end{corollary}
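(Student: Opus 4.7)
The strategy is to split $X_xK_{m(x,G_\varkappa)}(x,y)$ via Leibniz on the spectral expansion \eqref{Gru-pseudo} into two contributions: one where $X_x$ falls on the kernel-like factor $e^{-i\lambda\cdot x''}h_{\varkappa,k}^{\lambda}(x')$, and one where it falls on the symbol $m(x,\cdot)$. This is exactly what was done in the proof of Corollary~\ref{cor-pseudo:grad-unweighted-L^2-estimate}, and yields the pointwise inequality
\[
|X_x K_{m(x,G_\varkappa)}(x,y)|^2 \lesssim |X_x K_{m(x_0,G_\varkappa)}(x,y)|^2\big|_{x_0=x} + |K_{X_x m(x_0,G_\varkappa)}(x,y)|^2\big|_{x_0=x}.
\]
The crucial observation is that the two resulting pieces should be estimated with \emph{different} orders of Sobolev regularity on $m$, exploiting the smoothing effect that the $X_x$-derivative of the symbol already provides.

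For the first piece, I would use $d(x,y)^{2\mathfrak{r}}\le R^{-2\mathfrak{r}}(1+Rd(x,y))^{2\mathfrak{r}}$ and apply Lemma~\ref{lem-pseudo:grad-weighted-L^2-estimate} to the trivial pseudo-multiplier $\widetilde m(x,\eta)=m(x_0,\eta)$ (which is constant in $x$, so only the $\Gamma_2=0$ term in the lemma's right-hand side survives). After multiplying through by $|B(x,R^{-1})|$, this produces the bound
\[
R^{-2\mathfrak{r}}\cdot R^{2}\,\sup_{x_0}\|m(x_0,R^2\cdot)\|^2_{W^\infty_{\mathfrak{r}+\epsilon}} = R^{-2(\mathfrak{r}-1)}\sup_{x_0}\|m(x_0,R^2\cdot)\|^2_{W^\infty_{\mathfrak{r}+\epsilon}},
\]
which is the second term on the target right-hand side.

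For the second piece, the compact support $d(x,y)<\mathcal{K}_0$ lets me factor
\[
d(x,y)^{2\mathfrak{r}}\le \mathcal{K}_0^{2(1-\delta)}\,d(x,y)^{2(\mathfrak{r}-(1-\delta))}\le \mathcal{K}_0^{2(1-\delta)}R^{-2(\mathfrak{r}-(1-\delta))}(1+Rd(x,y))^{2(\mathfrak{r}-(1-\delta))}.
\]
Since $K_{X_x m(x_0,G_\varkappa)}(x,y)$ is the integral kernel of an ordinary spectral multiplier with symbol $X_xm(x_0,\cdot)$, Lemma~\ref{lem:L2-weighted-Plancherel-estimate-grushin} applied at order $\mathfrak{r}-(1-\delta)$ (together with the factor $|B(x,R^{-1})|$) yields
\[
R^{-2(\mathfrak{r}-(1-\delta))}\sup_{x_0}\|X_x m(x_0,R^2\cdot)\|^2_{W^\infty_{\mathfrak{r}-(1-\delta)+\epsilon}},
\]
which matches the first term on the target right-hand side. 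Summing the two contributions finishes the proof.

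The only mild nuisance, which I expect to be the sole technical wrinkle, is the borderline case in which $\mathfrak{r}-(1-\delta)$ degenerates to zero (possible when $\mathfrak{r}=1$ and $\delta=0$), where Lemma~\ref{lem:L2-weighted-Plancherel-estimate-grushin} does not directly apply since it demands a strictly positive order. I would handle this by replacing $1-\delta$ with $1-\delta-\epsilon'$ for a small $\epsilon'>0$, absorbing the extra factor $\mathcal{K}_0^{2\epsilon'}$ into the implicit constant. The conceptual heart of the argument is merely the asymmetric choice of Sobolev orders $\mathfrak{r}+\epsilon$ versus $\mathfrak{r}-(1-\delta)+\epsilon$ for the two Leibniz terms, together with the observation that on the compact region $d(x,y)<\mathcal{K}_0$ one can trade $1-\delta$ units of $R^{-1}$-decay for $1-\delta$ units of Sobolev smoothness.
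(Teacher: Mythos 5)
Your proof is correct and follows essentially the same route as the paper: a Leibniz split of $X_x K_{m(x,G_\varkappa)}$ into a piece where the gradient falls on the symbol and a piece where it falls on the kernel, followed by the asymmetric reduction $d(x,y)^{2\mathfrak{r}}\lesssim_{\mathcal{K}_0,\delta} d(x,y)^{2(\mathfrak{r}-(1-\delta))}$ on the symbol-derivative term alone (valid only because of the compact support in $y$), and then the respective weighted Plancherel estimates at the two different Sobolev orders. The paper carries this out inside the Fourier-integral (complexified heat kernel) representation from Lemma~\ref{lem-pseudo:grad-weighted-L^2-estimate}, while you invoke Lemma~\ref{lem-pseudo:grad-weighted-L^2-estimate} and Lemma~\ref{lem:L2-weighted-Plancherel-estimate-grushin} as black boxes on the two Leibniz pieces, which is the same argument presented at a slightly higher level; your observation about the degenerate case $\mathfrak{r}-(1-\delta)=0$ is also the right fix.
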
 

\begin{proof}
The proof of the corollary follows by repeating the proof of Lemma \ref{lem-pseudo:grad-weighted-L^2-estimate} with minor changes. Let us take the function $F$ as in the proof of Lemma \ref{lem-pseudo:grad-weighted-L^2-estimate}. By \eqref{eq:grad-weighted-L^2-estimate-step1} we can write 
\begin{align*}
X_x K_{m(x, G_{\varkappa})} (x,y) & = \frac{1}{2\pi}  \int_{\mathbb{R}} \left\{ \widehat{X_x F}(x, \tilde{\eta}) \right\} \left\{ p_{(1 - i \tilde{\eta}) R^{-2}}(x,y) \right\} d \tilde{\eta} \\ 
& \quad + \frac{1}{2\pi}  \int_{\mathbb{R}} \left\{ \widehat{F}(x, \tilde{\eta}) \right\} \left\{ X_x p_{(1 - i \tilde{\eta}) R^{-2}}(x,y) \right\} d \tilde{\eta}.
\end{align*}

Now, for any fixed $x$, while integrating in $y$-variable over the set $\{ y : d(x,y)<\mathcal{K}_0 \}$, we reduce the power of $d(x,y)$ in the term involving $\widehat{X_x F}(x, \tilde{\eta})$ as follows: 
\begin{align}\label{est:grad-weighted-L^2-estimate-x-grad-estimate}
& \int_{d(x,y)<\mathcal{K}_0} d(x,y)^{2\mathfrak{r}} \left| X_{x} K_{m (x, G_{\varkappa})}(x, y) \right|^2  dy\\
\nonumber & \lesssim_{\mathcal{K}_0, \delta} \sup_{x_0} \int_{d(x,y)<\mathcal{K}_0} d(x,y)^{2 (\mathfrak{r}-(1-\delta))} \left| \int_{\mathbb{R}} \left\{ \widehat{X_x F}(x_0, \tilde{\eta}) \right\} \left\{ p_{(1 - i \tilde{\eta}) R^{-2}}(x,y) \right\} d \tilde{\eta}\right|^2 \, dy \\
\nonumber & \quad + \sup_{x_0} \int_{d(x,y)<\mathcal{K}_0}d(x,y)^{2\mathfrak{r}} \left| \int_{\mathbb{R}} \left\{ \widehat{F}(x_0, \tilde{\eta}) \right\} \left\{ X_x p_{(1 - i \tilde{\eta}) R^{-2}}(x,y) \right\} d \tilde{\eta}\right|^2 \, dy\\
\nonumber & \leq \sup_{x_0} \int_{d(x,y)<\mathcal{K}_0} \frac{(1+ Rd(x,y))^{2 (\mathfrak{r}-(1-\delta))}}{R^{2 (\mathfrak{r}-(1-\delta))}} \left| \int_{\mathbb{R}} \left\{ \widehat{X_x F}(x_0, \tilde{\eta}) \right\} \left\{ p_{(1 - i \tilde{\eta}) R^{-2}}(x,y) \right\} d \tilde{\eta}\right|^2  dy\\
\nonumber & \quad + \sup_{x_0} \int_{d(x,y)<\mathcal{K}_0} \frac{(1+ Rd(x,y))^{2\mathfrak{r}}}{R^{2\mathfrak{r}} } \left| \int_{\mathbb{R}} \left\{ \widehat{F}(x_0, \tilde{\eta}) \right\} \left\{ X_x p_{(1 - i \tilde{\eta}) R^{-2}}(x,y) \right\} d \tilde{\eta}\right|^2 \, dy. 
\end{align}
With the above estimate, one can repeat the rest of the proof of Lemma \ref{lem-pseudo:grad-weighted-L^2-estimate} to complete the proof of estimate \eqref{est:grad-weighted-L^2-estimate-x-grad-compact}. 
\end{proof}

In an analogous manner, one can prove the following $L^\infty$-estimate. 
\begin{corollary} \label{cor-pseudo:grad-unweighted-L-infty-estimate-compact}
For every $\mathfrak{r} \geq 1$, $\epsilon > 0$, $0\leq \delta < 1$, and positive real number $\mathcal{K}_0$, we have 
\begin{align} \label{est:grad-weighted-L-infty-estimate-x-grad-compact} 
& \sup_{x \in \mathbb{R}^{n_1+n_2}} \sup_{d(x,y)<\mathcal{K}_0} |B(x,R^{-1})|^{1/2} |B(y,R^{-1})|^{1/2}  d(x,y)^{\mathfrak{r}} \left| X_{x} K_{m (x, G_{\varkappa})}(x, y) \right| \\
\nonumber & \lesssim_{\mathfrak{r}, \epsilon, \delta, \mathcal{K}_0} \sup_{x_{0}} \left( R^{-(\mathfrak{r}-(1-\delta))} \|X_{x} m(x_0, R^2 \cdot) \|_{W^{\infty}_{\mathfrak{r} - (1 - \delta) + \epsilon}} + R^{-(\mathfrak{r}-1)} \| m(x_0, R^2 \cdot) \|_{W^{\infty}_{\mathfrak{r}+ \epsilon}} \right), 
\end{align}
for every bounded Borel function $m : \mathbb{R}^{n_1 + n_2} \times \mathbb{R} \rightarrow \mathbb{C}$ whose support in the last variable is in $[0, R^2]$ for any $R>0$. 
\end{corollary}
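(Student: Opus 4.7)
The plan is to follow closely the proof of the preceding Corollary \ref{cor-pseudo:grad-unweighted-L-2-estimate-compact}, replacing every $L^2$-weighted Plancherel estimate by its pointwise ($L^\infty$) analogue. Setting $F(x,\eta) = m(x, R^2\eta) e^{\eta}$, the same Fourier-inversion identity used there yields the two-term decomposition
\begin{align*}
X_x K_{m(x, G_\varkappa)}(x,y) &= \frac{1}{2\pi} \int_{\mathbb R} \widehat{X_x F}(x, \tilde\eta) \, p_{(1-i\tilde\eta)R^{-2}}(x,y)\, d\tilde\eta \\
&\quad + \frac{1}{2\pi} \int_{\mathbb R} \widehat F(x,\tilde\eta) \, X_x p_{(1-i\tilde\eta)R^{-2}}(x,y)\, d\tilde\eta.
\end{align*}
In the first summand, where the space-derivative falls on the symbol, I would exploit the compactness $d(x,y) < \mathcal K_0$ by writing $d(x,y)^{\mathfrak r} \leq \mathcal K_0^{1-\delta}\, d(x,y)^{\mathfrak r - (1-\delta)}$; in the second summand, the full power $d(x,y)^{\mathfrak r}$ is retained. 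The factor $d(x,y)^{\alpha}$ is then rewritten as $R^{-\alpha}(Rd(x,y))^{\alpha} \leq R^{-\alpha}(1+Rd(x,y))^{\alpha}$, converting the plain weight into the natural weight of the theory.

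The next step is to invoke \emph{pointwise} weighted bounds for the (complexified) heat kernel and its $x$-gradient, namely, estimates of the shape
\begin{align*}
(1+Rd(x,y))^{\mathfrak r}\left|p_{(1-i\tilde\eta)R^{-2}}(x,y)\right| &\lesssim_{\mathfrak r} |B(x,R^{-1})|^{-1/2} |B(y,R^{-1})|^{-1/2}(1+|\tilde\eta|)^{\mathfrak r + Q/2},\\
(1+Rd(x,y))^{\mathfrak r}\left|X_x p_{(1-i\tilde\eta)R^{-2}}(x,y)\right| &\lesssim_{\mathfrak r} R\,|B(x,R^{-1})|^{-1/2} |B(y,R^{-1})|^{-1/2}(1+|\tilde\eta|)^{\mathfrak r + Q/2}.
\end{align*}
The first is essentially Lemma 4.2 of \cite{AnhBuiDuongSpectralMultipliersBesovTriebelLizorkin}; the second is obtained by the same scheme that produced Lemma \ref{lem-pseudo:grad-weighted-L^2-estimate}, using the pointwise Gaussian bound on $X_x p_t$ in \eqref{est:derivative-bounds-heat-kernel-grushin} in place of \eqref{est:heat-kernel-pointwise-Grushin}. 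Substituting into the decomposition, pulling out $|B(x,R^{-1})|^{-1/2}|B(y,R^{-1})|^{-1/2}$, and applying the standard Sobolev inequality $\int_{\mathbb R}(1+|\tilde\eta|)^{s}|\widehat G(\tilde\eta)|\, d\tilde\eta \lesssim_{s,\epsilon} \|G\|_{W^\infty_{s+1/2+\epsilon}}$ to the compactly-supported-in-$\eta$ functions $F(x_0,\cdot)$ and $X_x F(x_0,\cdot)$, one arrives at the claimed bound with an extra $1/2$-order of regularity in the Sobolev norms. This half-derivative loss is then eliminated by the standard interpolation trick of \cite{DuongOuhabazSikoraWeightedPlancherel2002JFA}, whose unweighted endpoint is supplied by the $p=\infty$ case of Corollary \ref{cor-pseudo:grad-unweighted-L-p-estimate}.

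The principal technical obstacle is the pointwise weighted estimate on $X_x p_{(1-i\tilde\eta)R^{-2}}(x,y)$ with controlled polynomial growth in $|\tilde\eta|$, since it is not stated explicitly in the excerpt. The argument should have two ingredients: a complex-time Gaussian bound on $X_x p_z$ for $\mathrm{Re}\, z > 0$, obtained from the real-time estimate \eqref{est:derivative-bounds-heat-kernel-grushin} via a Cauchy-formula / Phragm\'en--Lindel\"of argument; and an interpolation of this pointwise bound against the $L^2$-weighted gradient estimate \eqref{x-grad-heat-kernel-complexified}, in direct analogy with the derivation of the non-gradient version in \cite{AnhBuiDuongSpectralMultipliersBesovTriebelLizorkin}. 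Once this input is secured, the remaining manipulations are purely a transcription of the proof of Corollary \ref{cor-pseudo:grad-unweighted-L-2-estimate-compact} with $L^2$ norms replaced by $L^\infty$ norms.
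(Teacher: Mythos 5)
Your proposal is correct and follows the same route the paper intends: the paper's proof of this corollary is literally the remark ``In an analogous manner, one can prove the following $L^\infty$-estimate,'' referring back to Corollary \ref{cor-pseudo:grad-unweighted-L-2-estimate-compact}, and you have correctly unpacked what that analogy entails — the same Fourier-inversion two-term decomposition of $X_x K_{m(x,G_\varkappa)}(x,y)$, the same power reduction $d(x,y)^{\mathfrak r}\le\mathcal K_0^{1-\delta}d(x,y)^{\mathfrak r-(1-\delta)}$ on the first summand, the same conversion to $(1+Rd(x,y))^\alpha$ weights, with the $L^2$-Plancherel input replaced by its pointwise analogue (the gradient version of Lemma 4.3 of \cite{AnhBuiDuongSpectralMultipliersBesovTriebelLizorkin}) and the half-derivative loss removed by the interpolation trick of \cite{DuongOuhabazSikoraWeightedPlancherel2002JFA} anchored at the $p=\infty$ case of Corollary \ref{cor-pseudo:grad-unweighted-L-p-estimate}. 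You also correctly flag the one genuinely nontrivial ingredient that the paper glosses over — the pointwise complex-time bound on $X_x p_{(1-i\tilde\eta)R^{-2}}(x,y)$ with controlled polynomial growth in $|\tilde\eta|$ — and your two suggested routes (a complex-time Gaussian bound derived from the real-time gradient estimate \eqref{est:derivative-bounds-heat-kernel-grushin}, or interpolation of that pointwise bound against the $L^2$ estimate \eqref{x-grad-heat-kernel-complexified}) are both consistent with how the paper's references treat the non-gradient case. Two small caveats: the growth exponent you write, $(1+|\tilde\eta|)^{\mathfrak r+Q/2}$, is worse than the $(1+|\tilde\eta|)^{\mathfrak r}$ of the $L^2$ estimate, because passing from $|B(x,\cdot)|^{-1}$ to the symmetric form $|B(x,\cdot)|^{-1/2}|B(y,\cdot)|^{-1/2}$ costs an extra $Q/2$ — this does not break anything, since the interpolation step absorbs it, but it does mean the ``loss'' in the first pass is $Q/2+1/2$ rather than just $1/2$; and your attribution to ``Lemma 4.2 of \cite{AnhBuiDuongSpectralMultipliersBesovTriebelLizorkin}'' is loose (that lemma is a Plancherel-type estimate for multiplier kernels, not a heat-kernel bound), though this does not affect the substance of the argument.
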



\subsection{Boundedness of Grushin pseudo-multipliers} \label{subsec:proofs-direct-pseudo-grushin} 

We are now in a position to prove Theorems \ref{thm:pseudo-grushin-a=0-unweighted}, \ref{thm:pseudo-grushin-a=0-less-derivative} and \ref{thm:pseudo-grushin-a=0-full-derivative}. 

Given a bounded Borel function $m : \mathbb{R}^{n_1+n_2} \times \mathbb{R}_+ \to \mathbb{C}$ for which $T = m(x, G_{\varkappa}) \in \mathcal{B} \left( L^2 (\mathbb{R}^{n_1+n_2}) \right)$, let us decompose $T = \sum_{j = 0}^\infty T_j$ as follows. 

Choose and fix $\psi_0 \in C_c^\infty((-2,2))$ and $\psi_1 \in C_c^\infty((1/2,2))$ such that $0 \leq \psi_0 (\eta), \psi_1 (\eta) \leq 1$, and 
\begin{align*} 
\sum_{j=0}^\infty \psi_j(\eta) = 1 \quad \text{and} \quad \sum_{j=-\infty}^\infty \psi_1(2^j \eta) = 1, 
\end{align*} 
for all $\eta \geq 0$, where $\psi_j (\eta) = \psi_1 \left( 2^{-(j-1)} \eta \right)$ for $j \geq 2$. 

Now, for each $j \geq 0,$ define $T_j$ to be the Grushin pseudo-multiplier operator, with integral kernel $T_j (x, y) = K_{m_j (x, G_{\varkappa})} (x, y)$, where $m_j (x, \eta) = m (x, \eta) \psi_j (\eta)$. 


\begin{proof}[Proof of Theorem \ref{thm:pseudo-grushin-a=0-unweighted}]
We are given that $\left| \partial_{\eta}^l m(x, \eta) \right| \leq_{l} (1 + \eta)^{-l}$ for all $l \leq Q + 1$, and also that the operator $T = m(x, G_{\varkappa})$ is bounded on $L^2 ( \mathbb{R}^{n_1 + n_2})$. Note that with $R_0 = Q + \frac{1}{2}$, condition \eqref{cond:General-hypo-sup} follows from Corollary  \ref{cor-pseudo:grad-weighted-L-p-estimate} (with $\Gamma = 0$ and $p=\infty$) and condition \eqref{cond:General-hypo-y-grad-sup} follows from Corollary \ref{cor-pseudo:grad-weighted-L-p-estimate} (with $|\Gamma| = 1$ and $p=\infty$). Hence, it follows from Theorem \ref{thm:weak-type-bound-operator} that $T = m(x, G_{\varkappa})$ is of weak type $(1, 1)$. 
\end{proof}

\begin{proof}[Proof of Theorem \ref{thm:pseudo-grushin-a=0-less-derivative}]
For a fixed $0 \leq \delta < 1$, we are given that 
\begin{align*} 
\left| \partial_{\eta}^{l} m(x, \eta) \right| & \leq_{\Gamma, l} (1 + \eta)^{- l}, \quad \text{for  all} \,\, l \leq \floor*{Q/2} + 1, \\ 
\text{and} \quad \left| X_x \partial_{\eta}^{l} m(x, \eta) \right| & \leq_{\Gamma, l} (1 + \eta)^{-l + \frac{\delta}{2}}, \quad \text{for all} \, \, l \leq \floor*{Q/2}, 
\end{align*} 
and that the operator $T = m(x, G_{\varkappa})$ is bounded on $L^2 ( \mathbb{R}^{n_1 + n_2})$. 

Theorem \ref{thm:pseudo-grushin-a=0-less-derivative} would follow from Theorem \ref{thm:main-sparse} if we could show that the kernels $T_j (x, y) = K_{m_j (x, G_{\varkappa})} (x, y)$ satisfy conditions \eqref{cond:General-hypo} and \eqref{cond:General-hypo-grad} for some $R_0 > \floor*{Q/2}$. To this end, note that condition \eqref{cond:General-hypo} with $R_0 = \floor*{Q/2} + \epsilon$, for any $0 < \epsilon < 1$, follows from Lemma \ref{lem-pseudo:grad-weighted-L^2-estimate} (with $\Gamma = 0$). Finally, for any $0 < \tilde{\epsilon} < 1- \delta$, we can deduce condition \eqref{cond:General-hypo-grad} with $R_0 = \floor*{Q/2} + \tilde{\epsilon}$ from Corollary  \ref{cor-pseudo:grad-unweighted-L-2-estimate-compact}. 
\end{proof} 


\begin{proof}[Proof of Theorem \ref{thm:pseudo-grushin-a=0-full-derivative}]
For a fixed $0 \leq \delta < 1$, we are given that 
\begin{align*} 
\left| \partial_{\eta}^{l} m(x, \eta) \right| & \leq_{\Gamma, l} (1 + \eta)^{- l}, \quad \text{for  all} \,\, l \leq Q + 1, \\ 
\text{and} \quad \left| X_x \partial_{\eta}^{l} m(x, \eta) \right| & \leq_{\Gamma, l} (1 + \eta)^{-l + \frac{\delta}{2}}, \quad \text{for all} \, \, l \leq Q, 
\end{align*} 
and that the operator $T = m(x, G_{\varkappa})$ is bounded on $L^2 ( \mathbb{R}^{n_1 + n_2})$. 

Theorem \ref{thm:pseudo-grushin-a=0-full-derivative} would follow from Theorem \ref{thm:main-sparse-more-derivative} if we could show that the kernels $T_j (x, y) = K_{m_j (x, G_{\varkappa})} (x, y)$ satisfy conditions \eqref{cond:General-hypo-sup},  \eqref{cond:General-hypo-y-grad-sup} for $R_0 = Q + \frac{1}{2}$ and condition \eqref{cond:General-hypo-grad-sup} for any $R_0 > Q$. For the same, note that with $R_0 = Q + \frac{1}{2}$, condition \eqref{cond:General-hypo-sup} follows from Corollary \ref{cor-pseudo:grad-weighted-L-p-estimate} (with $\Gamma = 0$ and $p=\infty$) and condition \eqref{cond:General-hypo-y-grad-sup} from Corollary \ref{cor-pseudo:grad-weighted-L-p-estimate} (second inequality, with $|\Gamma| = 1$ and $p=\infty$). Finally, for any $0 < \tilde{\epsilon} < 1- \delta$, we can deduce condition \eqref{cond:General-hypo-grad} with $R_0 = \floor*{Q/2} + \tilde{\epsilon}$ from Corollary \ref{cor-pseudo:grad-unweighted-L-infty-estimate-compact}.
\end{proof}


\section*{Acknowledgements} 
We are extremely thankful to the anonymous referee for a detailed list of very valuable comments and suggestions which has greatly helped in improving the presentation of the article. We are also indebted to Atreyee Bhattacharya for several wonderful discussions on the sub-Riemannian structure of the Grushin metric. SB and RG were supported in parts from their individual INSPIRE Faculty Fellowships from DST, Government of India. RB was supported by the Senior Research Fellowship from CSIR, Government of India. AG was supported in parts by the  INSPIRE Faculty Fellowship of RG and Institute postdoctoral fellowships from IISER Bhopal and Centre for Applicable Mathematics, TIFR.


\providecommand{\bysame}{\leavevmode\hbox to3em{\hrulefill}\thinspace}
\providecommand{\MR}{\relax\ifhmode\unskip\space\fi MR }
\providecommand{\MRhref}[2]{%
  \href{http://www.ams.org/mathscinet-getitem?mr=#1}{#2}
}
\providecommand{\href}[2]{#2}


\begin{thebibliography}{10}

\bibitem{Anderson-Vagharshakyan-simple-proof-JGA2014}
Theresa~C. Anderson and Armen Vagharshakyan, \emph{A simple proof of the sharp
  weighted estimate for {C}alder\'{o}n-{Z}ygmund operators on homogeneous
  spaces}, J. Geom. Anal. \textbf{24} (2014), no.~3, 1276--1297. \MR{3223553}

\bibitem{Bagchi-Garg-1}
Sayan Bagchi and Rahul Garg, \emph{On {$L^2$}-boundedness of pseudo-multipliers
  associated to the {G}rushin operator}, https://arxiv.org/abs/2111.10098v3
  (2021).

\bibitem{BagchiThangaveluHermitePseudo}
Sayan Bagchi and Sundaram Thangavelu, \emph{On {H}ermite pseudo-multipliers},
  J. Funct. Anal. \textbf{268} (2015), no.~1, 140--170. \MR{3280055}

\bibitem{BahouriFermanianGallagherPseudodifferentialHeisenberg}
Hajer Bahouri, Clotilde Fermanian-Kammerer, and Isabelle Gallagher,
  \emph{Phase-space analysis and pseudodifferential calculus on the
  {H}eisenberg group}, Ast\'{e}risque (2012), no.~342, vi+127. \MR{2952066}

\bibitem{Beltran-Cladek-sparse-pseudodifferential}
David Beltran and Laura Cladek, \emph{Sparse bounds for pseudodifferential
  operators}, J. Anal. Math. \textbf{140} (2020), no.~1, 89--116. \MR{4094458}

\bibitem{BernicotFreyPseudodifferentialSemigroupOperators}
Fr\'{e}d\'{e}ric Bernicot and Dorothee Frey, \emph{Pseudodifferential operators
  associated with a semigroup of operators}, J. Fourier Anal. Appl. \textbf{20}
  (2014), no.~1, 91--118. \MR{3180890}

\bibitem{Bernicot-Frey-Petermichl-Sparse-Beyond-APDE-2016}
Fr\'{e}d\'{e}ric Bernicot, Dorothee Frey, and Stefanie Petermichl, \emph{Sharp
  weighted norm estimates beyond {C}alder\'{o}n-{Z}ygmund theory}, Anal. PDE
  \textbf{9} (2016), no.~5, 1079--1113. \MR{3531367}

\bibitem{AnhBuiDuongSpectralMultipliersBesovTriebelLizorkin}
The~Anh Bui and Xuan~Thinh Duong, \emph{Spectral multipliers of self-adjoint
  operators on {B}esov and {T}riebel-{L}izorkin spaces associated to
  operators}, Int. Math. Res. Not. IMRN (2021), no.~23, 18181--18224.
  \MR{4349231}

\bibitem{CardonaDelgadoRuzhanskyJGA2021}
Duv\'{a}n Cardona, Julio Delgado, and Michael Ruzhansky, \emph{{$L^p$}-{B}ounds
  for {P}seudo-differential {O}perators on {G}raded {L}ie {G}roups}, J. Geom.
  Anal. \textbf{31} (2021), no.~12, 11603--11647. \MR{4322546}

\bibitem{Chanillo-Torchinsky-weighted-pseudodifferential}
Sagun Chanillo and Alberto Torchinsky, \emph{Sharp function and weighted
  {$L^p$} estimates for a class of pseudodifferential operators}, Ark. Mat.
  \textbf{24} (1986), no.~1, 1--25. \MR{852824}

\bibitem{Chen-Sikora-multipliers-Grushin-type-2013}
Peng Chen and Adam Sikora, \emph{Sharp spectral multipliers for a new class of
  {G}rushin type operators}, J. Fourier Anal. Appl. \textbf{19} (2013), no.~6,
  1274--1293. \MR{3132914}

\bibitem{Christ-lectures-singular-1990}
Michael Christ, \emph{Lectures on singular integral operators}, CBMS Regional
  Conference Series in Mathematics, vol.~77, Published for the Conference Board
  of the Mathematical Sciences, Washington, DC; by the American Mathematical
  Society, Providence, RI, 1990. \MR{1104656}

\bibitem{Coifman-Meyer-pseudo-1978}
Ronald~R. Coifman and Yves Meyer, \emph{Au del\`a des op\'{e}rateurs
  pseudo-diff\'{e}rentiels}, Ast\'{e}risque, vol.~57, Soci\'{e}t\'{e}
  Math\'{e}matique de France, Paris, 1978, With an English summary. \MR{518170}

\bibitem{Coifman-Weiss-book-1971}
Ronald~R. Coifman and Guido Weiss, \emph{Analyse harmonique non-commutative sur
  certains espaces homog\`enes}, Lecture Notes in Mathematics, Vol. 242,
  Springer-Verlag, Berlin-New York, 1971, \'{E}tude de certaines int\'{e}grales
  singuli\`eres. \MR{0499948}

\bibitem{Conde-Culiuc-DiPlinio-Ou-Rough-Singular-APDE-2017}
Jos\'{e}~M. Conde-Alonso, Amalia Culiuc, Francesco Di~Plinio, and Yumeng Ou,
  \emph{A sparse domination principle for rough singular integrals}, Anal. PDE
  \textbf{10} (2017), no.~5, 1255--1284. \MR{3668591}

\bibitem{conde2019nondoubling}
Jos\'{e}~M. Conde-Alonso and Javier Parcet, \emph{Nondoubling
  {C}alder\'{o}n-{Z}ygmund theory: a dyadic approach}, J. Fourier Anal. Appl.
  \textbf{25} (2019), no.~4, 1267--1292. \MR{3977117}

\bibitem{Conde-Rey-MathAnn-2016}
Jos\'{e}~M. Conde-Alonso and Guillermo Rey, \emph{A pointwise estimate for
  positive dyadic shifts and some applications}, Math. Ann. \textbf{365}
  (2016), no.~3-4, 1111--1135. \MR{3521084}

\bibitem{Martini-Dallara-robust-approach-Grushin-multiplers-2020}
Gian~Maria Dall'Ara and Alessio Martini, \emph{A robust approach to sharp
  multiplier theorems for {G}rushin operators}, Trans. Amer. Math. Soc.
  \textbf{373} (2020), no.~11, 7533--7574. \MR{4169667}

\bibitem{Martini-Dallara-optimal-multipliers-Grushin-plane-I}
\bysame, \emph{An optimal multiplier theorem for {G}rushin operators in the
  plane, {I}}, Rev. Mat. Iberoam. (2022).

\bibitem{Martini-Dallara-optimal-multipliers-Grushin-plane-II}
\bysame, \emph{An optimal multiplier theorem for {G}rushin operators in the
  plane, {II}}, J. Fourier Anal. Appl. \textbf{28} (2022), no.~2, Paper No. 32,
  29. \MR{4402142}

\bibitem{DuongSikoraYanJFA2011}
Xuan~Thinh Duong, Adam Sikora, and Lixin Yan, \emph{Weighted norm inequalities,
  {G}aussian bounds and sharp spectral multipliers}, J. Funct. Anal.
  \textbf{260} (2011), no.~4, 1106--1131. \MR{2747017}

\bibitem{Dziubanski-Jotsaroop-Hardy-BMO-Grushin}
Jacek Dziuba\'{n}ski and K.~Jotsaroop, \emph{On {H}ardy and {BMO} spaces for
  {G}rushin operator}, J. Fourier Anal. Appl. \textbf{22} (2016), no.~4,
  954--995. \MR{3528406}

\bibitem{DziubanskiSikoraLieApproach}
Jacek Dziuba\'{n}ski and Adam Sikora, \emph{Lie group approach to {G}rushin
  operators}, J. Lie Theory \textbf{31} (2021), no.~1, 1--14. \MR{4161530}

\bibitem{EppersonHermitePseudo}
Jay Epperson, \emph{Hermite multipliers and pseudo-multipliers}, Proc. Amer.
  Math. Soc. \textbf{124} (1996), no.~7, 2061--2068. \MR{1343690}

\bibitem{Fefferman-Israel-Journal}
Charles Fefferman, \emph{{$L^{p}$} bounds for pseudo-differential operators},
  Israel J. Math. \textbf{14} (1973), 413--417. \MR{336453}

\bibitem{FischerRuzhanskyQuantizationBook}
Veronique Fischer and Michael Ruzhansky, \emph{Quantization on nilpotent {L}ie
  groups}, Progress in Mathematics, vol. 314, Birkh\"{a}user/Springer, [Cham],
  2016. \MR{3469687}

\bibitem{GeorgiadisNielsenPseudodifferentialSelfAdjointOperators}
A.~G. Georgiadis and M.~Nielsen, \emph{Pseudodifferential operators on spaces
  of distributions associated with non-negative self-adjoint operators}, J.
  Fourier Anal. Appl. \textbf{23} (2017), no.~2, 344--378. \MR{3622656}

\bibitem{Grushin70}
V.~V. Gru{\v{s}}in, \emph{A certain class of hypoelliptic operators}, Mat. Sb.
  (N.S.) \textbf{83 (125)} (1970), 456--473. \MR{0279436}

\bibitem{Hytonen-Perez-Rela-reverse-Holder-JFA2012}
Tuomas Hyt\"{o}nen, Carlos P\'{e}rez, and Ezequiel Rela, \emph{Sharp reverse
  {H}\"{o}lder property for {$A_\infty$} weights on spaces of homogeneous
  type}, J. Funct. Anal. \textbf{263} (2012), no.~12, 3883--3899. \MR{2990061}

\bibitem{Hytonen-A2-2012}
Tuomas~P. Hyt\"{o}nen, \emph{The sharp weighted bound for general
  {C}alder\'{o}n-{Z}ygmund operators}, Ann. of Math. (2) \textbf{175} (2012),
  no.~3, 1473--1506. \MR{2912709}

\bibitem{Hytonen-Roncal-Tapiola-Rough-homogeneous-Israel-2017}
Tuomas~P. Hyt\"{o}nen, Luz Roncal, and Olli Tapiola, \emph{Quantitative
  weighted estimates for rough homogeneous singular integrals}, Israel J. Math.
  \textbf{218} (2017), no.~1, 133--164. \MR{3625128}

\bibitem{Jerison-Sanchez-Calle-85}
David Jerison and Antonio S\'{a}nchez-Calle, \emph{Subelliptic, second order
  differential operators}, Complex analysis, {III} ({C}ollege {P}ark, {M}d.,
  1985--86), Lecture Notes in Math., vol. 1277, Springer, Berlin, 1987,
  pp.~46--77. \MR{922334}

\bibitem{JotsaroopSanjayThangaveluRieszTransformsGrushin}
K.~Jotsaroop, P.~K. Sanjay, and S.~Thangavelu, \emph{Riesz transforms and
  multipliers for the {G}rushin operator}, J. Anal. Math. \textbf{119} (2013),
  255--273. \MR{3043153}

\bibitem{Kurtz-Wheeden-weighted-multipliers-TAMS}
Douglas~S. Kurtz and Richard~L. Wheeden, \emph{Results on weighted norm
  inequalities for multipliers}, Trans. Amer. Math. Soc. \textbf{255} (1979),
  343--362. \MR{542885}

\bibitem{Lacey-A2-proof-Israel2017}
Michael~T. Lacey, \emph{An elementary proof of the {$A_2$} bound}, Israel J.
  Math. \textbf{217} (2017), no.~1, 181--195. \MR{3625108}

\bibitem{Lacey-Mena-Reguera-Sparse-Bochner-Riesz-JFAA-2019}
Michael~T. Lacey, Dario Mena, and Maria~Carmen Reguera, \emph{Sparse bounds for
  {B}ochner-{R}iesz multipliers}, J. Fourier Anal. Appl. \textbf{25} (2019),
  no.~2, 523--537. \MR{3917956}

\bibitem{Lerner-A2-2013}
Andrei~K. Lerner, \emph{A simple proof of the {$A_2$} conjecture}, Int. Math.
  Res. Not. IMRN (2013), no.~14, 3159--3170. \MR{3085756}

\bibitem{Lerner-NYJM-2016}
\bysame, \emph{On pointwise estimates involving sparse operators}, New York J.
  Math. \textbf{22} (2016), 341--349. \MR{3484688}

\bibitem{Lerner-Ombrosi-pointwaise-sparse2020}
Andrei~K. Lerner and Sheldy Ombrosi, \emph{Some remarks on the pointwise sparse
  domination}, J. Geom. Anal. \textbf{30} (2020), no.~1, 1011--1027.
  \MR{4058547}

\bibitem{Lorist-pointwaise-sparse2021}
Emiel Lorist, \emph{On pointwise {$\ell^r$}-sparse domination in a space of
  homogeneous type}, J. Geom. Anal. \textbf{31} (2021), no.~9, 9366--9405.
  \MR{4302224}

\bibitem{MartiniMullerGrushinRevistaMath}
Alessio Martini and Detlef M\"{u}ller, \emph{A sharp multiplier theorem for
  {G}rushin operators in arbitrary dimensions}, Rev. Mat. Iberoam. \textbf{30}
  (2014), no.~4, 1265--1280. \MR{3293433}

\bibitem{MartiniSikoraGrushinMRL}
Alessio Martini and Adam Sikora, \emph{Weighted {P}lancherel estimates and
  sharp spectral multipliers for the {G}rushin operators}, Math. Res. Lett.
  \textbf{19} (2012), no.~5, 1075--1088. \MR{3039831}

\bibitem{Michalowski-Rule-Staubach-Canad2012}
Nicholas Michalowski, David~J. Rule, and Wolfgang Staubach, \emph{Weighted
  {$L^p$} boundedness of pseudodifferential operators and applications}, Canad.
  Math. Bull. \textbf{55} (2012), no.~3, 555--570. \MR{2957271}

\bibitem{Miller-pseudodifferential-TAMS1982}
Nicholas Miller, \emph{Weighted {S}obolev spaces and pseudodifferential
  operators with smooth symbols}, Trans. Amer. Math. Soc. \textbf{269} (1982),
  no.~1, 91--109. \MR{637030}

\bibitem{Muckenhoupt-TAMS1972}
Benjamin Muckenhoupt, \emph{Weighted norm inequalities for the {H}ardy maximal
  function}, Trans. Amer. Math. Soc. \textbf{165} (1972), 207--226. \MR{293384}

\bibitem{Nazarov-Reznikov-Volberg-A2-proof-Indiana2013}
Fedor Nazarov, Alexander Reznikov, and Alexander Volberg, \emph{The proof of
  {$A_2$} conjecture in a geometrically doubling metric space}, Indiana Univ.
  Math. J. \textbf{62} (2013), no.~5, 1503--1533. \MR{3188553}

\bibitem{ReedSimonBookVolumeIV}
Michael Reed and Barry Simon, \emph{Methods of modern mathematical physics.
  {IV}. {A}nalysis of operators}, Academic Press [Harcourt Brace Jovanovich,
  Publishers], New York-London, 1978. \MR{0493421}

\bibitem{RobinsonSikoraDegenerateEllipticOperatorsGrushinTypeMathZ2008}
Derek~W. Robinson and Adam Sikora, \emph{Analysis of degenerate elliptic
  operators of {G}ru\v{s}in type}, Math. Z. \textbf{260} (2008), no.~3,
  475--508. \MR{2434466}

\bibitem{RobinsonSikoraMathZ2016}
\bysame, \emph{Gru{\v{s}}in operators, {R}iesz transforms and nilpotent {L}ie
  groups}, Math. Z. \textbf{282} (2016), no.~1-2, 461--472. \MR{3448390}

\bibitem{RuzhanskyTurunenPseudoCompactGroupBook}
Michael Ruzhansky and Ville Turunen, \emph{Pseudo-differential operators and
  symmetries}, Pseudo-Differential Operators. Theory and Applications, vol.~2,
  Birkh\"{a}user Verlag, Basel, 2010, Background analysis and advanced topics.
  \MR{2567604}

\bibitem{Soria-Tradacete-doubling-constant-2019}
Javier Soria and Pedro Tradacete, \emph{The least doubling constant of a metric
  measure space}, Ann. Acad. Sci. Fenn. Math. \textbf{44} (2019), no.~2,
  1015--1030. \MR{3973554}

\bibitem{Strichartz1986-Sub-Riemann}
Robert~S. Strichartz, \emph{Sub-{R}iemannian geometry}, J. Differential Geom.
  \textbf{24} (1986), no.~2, 221--263. \MR{862049}

\bibitem{TaylorPseudodifferentialBook81}
Michael~E. Taylor, \emph{Pseudodifferential operators}, Princeton Mathematical
  Series, No. 34, Princeton University Press, Princeton, N.J., 1981.
  \MR{618463}

\bibitem{DuongOuhabazSikoraWeightedPlancherel2002JFA}
Xuan Thinh~Duong, El~Maati Ouhabaz, and Adam Sikora, \emph{Plancherel-type
  estimates and sharp spectral multipliers}, J. Funct. Anal. \textbf{196}
  (2002), no.~2, 443--485. \MR{1943098}

\bibitem{Treves-pseudodifferential-Vol1Book}
Fran\c{c}ois Tr{\`e}ves, \emph{Introduction to pseudodifferential and {F}ourier
  integral operators. {V}ol. 1}, University Series in Mathematics, Plenum
  Press, New York-London, 1980, Pseudodifferential operators. \MR{597144}

\bibitem{VaropoulosSaloffCosteCoulhonAnalysisGeometryGroupsBook92}
N.~Th. Varopoulos, L.~Saloff-Coste, and T.~Coulhon, \emph{Analysis and geometry
  on groups}, Cambridge Tracts in Mathematics, vol. 100, Cambridge University
  Press, Cambridge, 1992. \MR{1218884}

\bibitem{volberg2018sparse}
Alexander Volberg and Pavel Zorin-Kranich, \emph{Sparse domination on
  non-homogeneous spaces with an application to {$A_p$} weights}, Rev. Mat.
  Iberoam. \textbf{34} (2018), no.~3, 1401--1414. \MR{3850292}

\end{thebibliography}
\end{document}